\DeclareMathAlphabet{\mathcalligra}{T1}{calligra}{m}{n}
\DeclareFontShape{T1}{calligra}{m}{n}{<->s*[1.5]callig15}{}
\newtheorem{theorem}{Theorem}[section]
\newtheorem{lemma}[theorem]{Lemma}
\newtheorem{proposition}[theorem]{Proposition}
\theoremstyle{definition}
\newtheorem{definition}[theorem]{Definition}
\newtheorem{remark}[theorem]{Remark}
\newtheorem{theorem-definition}[theorem]{Theorem-Definition}
\numberwithin{equation}{section}
\newcommand{\CC} {\mathbb{C}}
\newcommand{\NN} {\mathbb{N}}
\newcommand{\RR} {\mathbb{R}}
\newcommand{\ZZ} {\mathbb{Z}}
\newcommand {\shD} {\mathcal{D}}
\newcommand {\shH} {\mathcal{H}}
\newcommand {\shI} {\mathcal{I}}
\newcommand {\shP} {\mathcal{P}}
\newcommand {\fot}  {\mathfrak{t}}
\newcommand{\sExt}{\mathscr{E} \kern -1pt xt}
\newcommand {\sHom}{\mathscr{H}\kern-5pt\mathcalligra{om}}
\renewcommand {\Im} {\operatorname{Im}}
\renewcommand {\ker } {\operatorname{Ker}}
\newcommand {\Ker} {\operatorname{Ker}}
\newcommand{\Diff}{\operatorname{Diff}}
\newcommand {\rank} {\operatorname{rank}}
\newcommand {\supp} {\operatorname{supp}}
\newcommand{\sTor}{\mathscr{T} \kern -3pt or}
\newcommand {\vol} {\operatorname{vol}}
\begin{document}

\title[ Limit of geometric quantizations on K\"ahler manifolds with T-symmetry]{ Limit of geometric quantizations on K\"ahler manifolds with T-symmetry}	

\author[Leung]{Naichung Conan Leung,}
\address{The Institute of Mathematical Sciences and Department of Mathematics\\ The Chinese University of Hong Kong\\ Shatin\\ Hong Kong}
\email{leung@math.cuhk.edu.hk}
	
\author[Wang]{Dan Wang}

\address{CAMGSD and Department of Mathematics, IST\\ University of Lisbon}
\address{The Institute of Mathematical Sciences and Department of Mathematics\\ The Chinese University of Hong Kong\\ Shatin\\ Hong Kong}
\email{dwang116@link.cuhk.edu.hk}

\thanks{}

\maketitle

\begin{abstract}

A compact K\"ahler manifold $\left( M,\omega ,J\right) $ with $T$-symmetry
admits a natural mixed polarization $\mathcal{P}_{\mathrm{mix}}$ whose real directions come from
the $T$-action. In \cite{LW1}, we constructed a one-parameter family of K\"ahler
structures $\left( \omega ,J_{t}\right) $'s with the same underlying K\"ahler form $\omega $ and $J_{0}=J$, such that (i) there is a $T$-equivariant
biholomorphism between $\left( M,J_{0}\right) $ and $\left( M,J_{t}\right) $
and (ii) K\"ahler polarizations $\mathcal{P}
_{t}$'s corresponding to $J_{t}$'s
converge to $\mathcal{P}_{\mathrm{mix}}$ as $t$ goes to infinity.

In this paper, we study the quantum analog of above results. Assume $L$ is a
pre-quantum line bundle on $\left( M,\omega \right) $. Let $\mathcal{H}_{t}$ and $
\mathcal{H}_{\mathrm{mix}}$ be quantum spaces defined using polarizations $\mathcal{P}_{t}$ and $\mathcal{P}_{\mathrm{mix}}$
respectively. In particular, $\mathcal{H}_{t}=H_{\bar{\partial}_{t}}^{0}\left(
M,L\right) $. They are both representations of $T$. We show that (i) there
is a $T$-equivariant isomorphism between $\mathcal{H}_{0}$ and $\mathcal{H}_{\mathrm{mix}}$ and (ii) for
regular $T$-weight $\lambda $, corresponding $\lambda $-weight spaces $
\mathcal{H}_{t,\lambda }$'s converge to $\mathcal{H}_{\mathrm{mix},\lambda }$ as $t$ goes to infinity.
\end{abstract}
\textbf{MSC 53D20 53D50}
\section{Introduction}
Geometric quantization is a procedure to assign a certain vector space, which is called a quantum Hilbert space, to a symplectic manifold $(M, \omega)$. Assume $M$ admits a pre-quantum line bundle $(L,\nabla, h)$, which is  a complex line bundle $L \rightarrow X$ with hermitian metric $h$ and a hermitian connection $\nabla$ such that the curvature form $ F_{\nabla} = -i \omega$. In order to perform geometric quantization, we need to choose a polarization $\shP$, which is an integrable Lagrangian subbundle of the complexification of the tangent bundle $TM$ of $M$. The quantum Hilbert space  $\shH_{\shP}$ associated to a polarization $\shP$ is the subspace of $\Gamma(M,L)$ defined by:
$$\shH_{\shP} = \{ s \in \Gamma(M, L) \mid \nabla_{\xi} s =0, \forall \xi \in \Gamma(M, \shP)\} .$$
 A central problem in geometric quantization is the study of the dependence of $\shH_{\shP}$ on the choice of $\shP$. A common choice of polarization is a K\"ahler polarization $\shP_{J}$ which comes from an integrable complex structure $J$ on $M$ such that $(M,\omega, J)$ is a K\"ahler manifold. In this case $\shP_{J} = T^{0,1}M$, and note that $\shP_{J} \cap \bar{\shP}_{J} = 0$. The quantum space $\shH_{\shP_{J}}$ is the space of $J$-holomorphic sections:
$$\shH_{P_{J}} = \{s \in \Gamma(M, L) \mid \bar{\partial}_{J} s =0\},$$
where $\bar{\partial}_{J}=\nabla^{0,1}_{J}$ is the the (0,1)-part of the connection $\nabla$ with respect to $J$.

Let $(M, \omega, J)$ be a compact K\"ahler manifold of real dimension $2m$, equipped with an effective Hamiltonian $n$-dimensional torus action by isometries with moment map $\mu: M \rightarrow \fot^{*}$. In the above setting, we have the following two polarizations on $M$. One is the K\"ahler polarization $\shP_{0}=TM^{0,1}$ with respect to $J_{0}=J$. The other is the mixed polarization $\shP_{\mathrm{mix}} = (\shP_{J} \cap \shD_{\CC} ) \oplus \shI_{\CC}$ on $M$ constructed in \cite{LW1} (see Definition \ref{def3-0-1}), whose real directions come from the Hamiltonian action. 
 Taking any strictly convex function $\phi$ on $\fot^{*}$, in \cite{LW1} we constructed a one-parameter family of complex structures $J_{t}, t \ge 0$, and there exists a one-parameter family of biholomorphism $\psi_{t}:(M,J_{0}) \rightarrow (M,J_{t})$. We showed that the corresponding K\"ahler polarizations $\shP_{t}$ converge to $\shP_{mix}$, as $t$ goes to $\infty$.  
 In this paper, we investigate the relationship between quantum spaces $\shH_{0}$ and $\shH_{\mathrm{mix}}$ along the one-parameter family of K\"ahler polarizations $\shP_{t}$'s.
 Our basic setting is the following $(*)$.
  \begin{enumerate}
\item[$(*):$]  $(M, \omega, J)$ is a compact K\"ahler manifold of real dimension $2m$, equipped with an effective Hamiltonian $n$-dimensional torus action $\rho: T^{n} \rightarrow \Diff(M, \omega, J)$ by isometries with moment map $\mu: M \rightarrow \fot^{*}$. Assume $M$ admits a $T^{n}$-invariant pre-quantum line bundle $(L, \nabla, h)$. Pick a strictly convex function $\phi: \fot^{*} \rightarrow \RR$ and denote the Hamiltonian vector field associated to the composition $\varphi=\phi \circ \mu$ by $X_{\varphi}$.
\end{enumerate}

Let $J_{t}$ be the complex structures determined by the imaginary time flow $e^{-itX_{\varphi}}$(see Theorem \ref{thm3-2}).
To begin, we provide the formula relating the Kähler potential $\rho_{t}$ of $\omega$ with respect to $J_{t}$ and the Kähler potential $\rho_{0}$ of $\omega$ with respect to $J_{0}$.

\begin{theorem}(Theorem \ref{thm3-4-2})
Under the assumption $(*)$, let $\rho_{0} $ be a local $T^{n}$-invariant K\"ahler potential for $\omega$ with respect to $J_{0}$. Then, for any $t >0$, a local $T^{n}$-invariant K\"ahler potential $\rho_{t}$ for $\omega$ with respect to $J_{t}$ is given by 
$$\rho_{t} =\rho_{0} -2t \varphi +2t\beta(X_{\varphi}),$$ 
where $\beta$ is the real local potential for $\omega$ defined by $\beta=\mathrm{Re}(i \bar{\partial}_{0} \rho_{0}) = d_{0}^{c}\rho_{0}$ (i.e. $\beta^{0,1}=\frac{i}{2} \bar{\partial}_{0} \rho_{0}$), with $d^{c}_{0}=i(\partial_{0}-\bar{\partial}_{0})$ and $\bar{\partial}_{0}$ being the $\bar{\partial}$-operator with respect to the complex structure $J_{0}$.
\end{theorem}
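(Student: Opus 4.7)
My plan is to use the $T^{n}$-equivariant biholomorphism $\psi_{t}\colon (M, J_{0}) \to (M, J_{t})$ constructed in \cite{LW1}, which is realized as the time-$t$ flow of the real vector field $V := J_{0} X_{\varphi}$, so that $J_{t} = (\psi_{t})_{*} J_{0}$. Since $\psi_{t}$ is a biholomorphism, the identity $\omega = i\partial_{t}\bar\partial_{t}\rho_{t}$ pulls back through $\psi_{t}$ to the equivalent statement $\psi_{t}^{*}\omega = i\partial_{0}\bar\partial_{0}(\rho_{t}\circ\psi_{t})$. Hence it suffices to (i) compute $\psi_{t}^{*}\omega$ as a $(1,1)_{J_{0}}$-form and (ii) identify $\rho_{t}\circ\psi_{t}$ as a local $J_{0}$-K\"ahler potential for it; the pull-back by $\psi_{t}^{-1}$, together with $T^{n}$-equivariance of the ingredients, then recovers the stated formula.

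For (i), Cartan's formula together with $d\omega = 0$ gives
\begin{equation*}
\psi_{t}^{*}\omega - \omega \;=\; d\!\int_{0}^{t} \psi_{s}^{*}(\iota_{V}\omega)\,ds.
\end{equation*}
The K\"ahler identity $\omega(J_{0}A, B) = -g(A,B)$ combined with $g(X_{\varphi},\cdot) = J_{0}^{*} d\varphi = d_{0}^{c}\varphi$ yields $\iota_{V}\omega = -d_{0}^{c}\varphi$. The key reduction is to show, using the $T^{n}$-invariance of $\varphi$ together with the fact that $V = \nabla\varphi$ is the K\"ahler gradient of $\varphi$, that along the flow of $V$ the integrand $\psi_{s}^{*}(d_{0}^{c}\varphi)$ admits an explicit description whose cumulative integral equals $d_{0}^{c}$ of a function linear in $t$, plus a $d$-exact correction. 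Combining with $dd_{0}^{c} = -2i\partial_{0}\bar\partial_{0}$ and the crucial computation $V(\rho_{0}) = d\rho_{0}(J_{0}X_{\varphi}) = d_{0}^{c}\rho_{0}(X_{\varphi})$ (which, after applying the convention $\beta^{0,1} = \tfrac{i}{2}\bar\partial_{0}\rho_{0}$, relates $V(\rho_{0})$ and $\beta(X_{\varphi})$) produces the $J_{0}$-K\"ahler potential $\rho_{0} - 2t\varphi + 2t\beta(X_{\varphi})$ for $\psi_{t}^{*}\omega$.

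For (ii), the $T^{n}$-equivariance of $\psi_{t}$ together with the $T^{n}$-invariance of the functions $\rho_{0}$, $\varphi$, and $\beta(X_{\varphi})$ means that composing with $\psi_{t}^{-1}$ reproduces the same functional expression (the residual pluriharmonic discrepancy, if any, is absorbed into the freedom of choosing a K\"ahler potential). This yields the claimed $\rho_{t} = \rho_{0} - 2t\varphi + 2t\beta(X_{\varphi})$.

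The main obstacle is the integral computation $\int_{0}^{t}\psi_{s}^{*}(d_{0}^{c}\varphi)\,ds$: the 1-form $d_{0}^{c}\varphi$ is not preserved by the non-$J_{0}$-holomorphic flow $\psi_{s}$, and extracting the linear-in-$t$ behavior requires a careful interplay between the flow of $V$, the $J_{0}$-operator $d_{0}^{c}$, and the $T^{n}$-invariance properties of $\varphi$ and $\beta$. Locally one may reduce to $T^{n}$-equivariant coordinates adapted to the moment map, where the flow of $V$ and its action on $d_{0}^{c}\varphi$ can be computed explicitly, so that the cancellation between $\psi_{s}^{*}(d_{0}^{c}\varphi)$ and the correction built out of $\beta(X_{\varphi})$ is manifest.
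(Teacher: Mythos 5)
Your proposal does not reach the result: the central step is left as an acknowledged ``main obstacle,'' and two of the structural claims it rests on are problematic. First, the identification of $\psi_{t}$ with the time-$t$ flow of the fixed vector field $V=J_{0}X_{\varphi}$ is not what \cite{LW1} provides: there $\psi_{t}$ is defined by applying the Lie series $e^{-itX_{\varphi}}$ to local $J_{0}$-holomorphic coordinates, and the underlying point map is generated by a \emph{time-dependent} vector field (essentially $-J_{s}X_{\varphi}$, with $J_{s}$ varying along the deformation), not by $J_{0}X_{\varphi}$; this changes the integrand in your Cartan-formula computation. Second, step (ii) is incorrect as stated: $T^{n}$-equivariance of $\psi_{t}$ only guarantees that $\psi_{t}^{*}$ sends $T^{n}$-invariant functions to $T^{n}$-invariant functions, not to themselves. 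Indeed $\psi_{t}$ moves points transversally to the level sets of $\mu$ (it is a gradient-type flow of $\varphi$), so $\psi_{t}^{*}\varphi=\phi\circ\mu\circ\psi_{t}\neq\varphi$ in general, and the ``same functional expression'' conclusion does not follow. Third, the evaluation of $\int_{0}^{t}\psi_{s}^{*}(d_{0}^{c}\varphi)\,ds$ --- which is where the terms $-2t\varphi$ and $+2t\beta(X_{\varphi})$ would actually have to emerge --- is never carried out.

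For comparison, the paper's proof sidesteps all of this by quoting the Mour\~ao--Nunes potential formula $\rho_{t}=2\Im\bigl(\tfrac{i}{2}e^{-itX_{\varphi}}\rho_{0}-it\varphi-\alpha_{-it}\bigr)$ from \cite[Theorem 4.1]{MN} and then reducing it with two invariance facts: $X_{\varphi}\rho_{0}=0$ (so $e^{-itX_{\varphi}}\rho_{0}=\rho_{0}$) and $X_{\varphi}(\beta(X_{\varphi}))=0$ from Lemma \ref{lem4-2-2} (so $\alpha_{-it}=-it\beta(X_{\varphi})$), after which the claimed formula drops out by taking imaginary parts. If you want to keep a flow-theoretic derivation, you would need to first establish exactly which (time-dependent) vector field generates $\psi_{t}$ and then perform the integral honestly; as written, the argument has a gap precisely at the point where the theorem's content lies.
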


Let $\hat{\varphi}$ be the quantum operator 
on $L$ associated to $\varphi \in C^{\infty}(M)$ (see \ref{eq2-0-9}) and let $\shH_{t}$ be the quantum space associated to $\shP_{t}$. We show that $e^{t\hat{\varphi}}$ can be applied to local $\bar{\partial}_{0}$-holomorphic sections of $L$(see Definition \ref{def3-0-4}). Furthermore, 

 \begin{theorem}(Theorem \ref{thm7})
Under the assumption $(*)$, for all $t >0$, the operator $e^{t\hat{\varphi}}: \shH_{0} \rightarrow \shH_{t}$ is a $T^{n}$-equivariant isomorphism.
 \end{theorem}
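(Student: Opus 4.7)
The plan is to decompose the proof into three parts: (i) $T^n$-equivariance of $e^{t\hat\varphi}$, (ii) the image of $e^{t\hat\varphi}|_{\mathcal{H}_0}$ lies in $\mathcal{H}_t$, and (iii) bijectivity. Part (i) is essentially formal. The Kostant--Souriau prequantum operator $\hat\varphi=-i\nabla_{X_\varphi}+\varphi$ is assembled from $T^n$-invariant data---$\varphi=\phi\circ\mu$ is $T^n$-invariant, and so are $X_\varphi$ and the prequantum data $(L,\nabla,h)$---so $\hat\varphi$ commutes with the induced $T^n$-action on $\Gamma(M,L)$, and hence so does $e^{t\hat\varphi}$.

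The main step is (ii), and the strategy is to reduce to a local computation and then invoke the K\"ahler potential formula. In a $T^n$-invariant chart, fix a $T^n$-invariant local K\"ahler potential $\rho_0$ for $\omega$ relative to $J_0$, and trivialize $L$ by a local section whose connection one-form is $-i\beta$ with $\beta=d^c_0\rho_0$. In this frame the condition $\bar{\partial}_0 s=0$ becomes an explicit PDE for the coefficient function, and by the preceding theorem the formula $\rho_t=\rho_0-2t\varphi+2t\beta(X_\varphi)$ governs the analogous description of $\mathcal{H}_t$. I would then compute $e^{t\hat\varphi}$ in this trivialization: the $\varphi$-term contributes a factor of $e^{t\varphi}$, while exponentiating $-it\nabla_{X_\varphi}$ breaks into the formal imaginary-time flow generated by $X_\varphi$ together with a phase built from $\beta(X_\varphi)$. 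These contributions combine so that the net shift of K\"ahler potential is precisely $-2t\varphi+2t\beta(X_\varphi)$, matching $\rho_t-\rho_0$, which converts a $J_0$-holomorphic coefficient function into a $J_t$-holomorphic one.

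The principal obstacle is making rigorous sense of the imaginary-time flow $e^{-itX_\varphi}$ on local sections of $L$ and verifying that the local outputs patch into a global element of $\mathcal{H}_t$. The $T^n$-invariance of $X_\varphi$ is crucial here: along $T^n$-orbits the flow reduces to analytic continuation in the toral variables, which is the setting in which Definition~\ref{def3-0-4} is designed to operate. Once local well-definedness and $\bar{\partial}_t$-closedness are in hand, global consistency is automatic because $e^{t\hat\varphi}$ is defined intrinsically on $\Gamma(M,L)$, independently of trivialization.

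For (iii), the inverse is $e^{-t\hat\varphi}$. Running the argument of (ii) with the roles of $J_0$ and $J_t$ swapped---equivalently, replacing $t$ by $-t$ and reversing the imaginary-time flow---shows that $e^{-t\hat\varphi}$ sends $\mathcal{H}_t$ back into $\mathcal{H}_0$. Combined with (ii), this yields the claimed $T^n$-equivariant isomorphism.
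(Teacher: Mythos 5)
Your proposal matches the paper's own argument in all essentials: the paper likewise works in $T^{n}$-invariant charts with an invariant potential $\rho_0$ and frame $\sigma$, uses $e^{t\hat{\varphi}}\sigma=e^{t(\varphi-\beta(X_\varphi))}\sigma$ together with $\rho_t=\rho_0-2t\varphi+2t\beta(X_\varphi)$ to get $e^{t\hat{\varphi}}(f_0e^{-\rho_0/2}\sigma)=(\psi_t^{*}f_0)e^{-\rho_t/2}\sigma$, takes $e^{-t\hat{\varphi}}$ as the inverse, and derives equivariance from $[\xi_j^{\#},X_\varphi]=0$ and $\{\mu_j,\varphi\}=0$. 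The only cosmetic difference is that the paper verifies the patching on overlaps by an explicit transition-function computation rather than appealing to the intrinsic definition of the Lie series, but this does not change the substance of the argument.
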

 
Our next result demonstrates the existence of a lifting of $\psi_{t}$ from $M$ to $\tilde{\psi}_{t}$ on $L$, which is realized through the imaginary-time flow $e^{-it\tilde{X}{\varphi}}$.

 It is found that under the isomorphism $\Psi_{t}: \Gamma(M,L) \rightarrow \Gamma(M, \psi_{t}^{*}L)$ induced by $\tilde{\psi}_{t}$, we have $
e^{t\hat{\varphi}}s_{0}=\Psi_{t}^{-1}(\psi_{t}^{*}s_{0})$,
for any section $s_{0} \in \shH_{0}$.

 \begin{theorem}(Theorem \ref{thm8})
 Let $\psi_{t}: (M,J_{t}) \rightarrow (M,J_{0})$ be the diffeomorphisms given by the imaginary time flow $e^{-itX_{\varphi}}$. 
 Then there exist maps of line bundles  $\tilde{\psi}_{t}:(L, \bar{\partial}_{L,t}) \rightarrow (L, \bar{\partial}_{L,0})$ given by applying $e^{-it\tilde{X}_{\varphi}}$ to local holomorphic coordinates with respect to $\bar{\partial}_{L,0}$ such that the following diagram
 $$
\begin{tikzcd}
(L, \bar{\partial}_{L,t}) \arrow{r}{\tilde{\psi}_{t}} \arrow{d}{\pi}&(L, \bar{\partial}_{L,0})\arrow{d}{\pi}\\
(M, J_{t}) \arrow{r}{\psi_{t}}& (M,J_{0})
\end{tikzcd}
$$
 commutes (i.e. there exist bundle isomorphisms $\psi_{t}^{*}(L,\bar{\partial}_{L,0}) \cong (L,\bar{\partial}_{L,t}) $ ). Moreover, for any section $s_{0} \in \shH_{0}$, we have 

\begin{equation}
e^{t\hat{\varphi}}s_{0}=\Psi_{t}^{-1}(\psi_{t}^{*}s_{0}),
\end{equation}
where $\Psi_{t}: \Gamma(M,L) \rightarrow \Gamma(M, \psi_{t}^{*}L)$ is the isomorphism induced by $\tilde{\psi}_{t}$. 
\end{theorem}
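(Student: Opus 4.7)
The plan is to construct $\tilde{\psi}_t$ as the imaginary-time flow of a natural prequantum lift $\tilde{X}_\varphi$ of $X_\varphi$ to $L$, then verify that this flow identifies the $\bar{\partial}$-operators, and finally match it on sections with the operator $e^{t\hat{\varphi}}$ via the infinitesimal correspondence $\tilde{X}_\varphi \leftrightarrow -i\hat{\varphi}$.

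First I would introduce the Kostant--Souriau lift $\tilde{X}_\varphi = (X_\varphi)^H + i\varphi \cdot R$ on $L$, where $(X_\varphi)^H$ is the horizontal lift with respect to $\nabla$ and $R$ is the generator of the fiberwise $\mathbb{C}^*$-action. By construction $\pi_* \tilde{X}_\varphi = X_\varphi$, the lift preserves $\nabla$ and $h$, and acting on sections viewed as equivariant functions on $L^\times$ it reproduces $\hat{\varphi}$ up to the standard factor of $i$. This is the infinitesimal ingredient that will eventually give $e^{-it\tilde{X}_\varphi} \leftrightarrow e^{t\hat{\varphi}}$ on sections.

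Next I would define $\tilde{\psi}_t$. Since $-it\tilde{X}_\varphi$ is not a real vector field, its flow has no classical meaning on all of $L$, but exactly as in Definition \ref{def3-0-4} and Theorem \ref{thm7} it makes sense when applied to $\bar{\partial}_{L,0}$-holomorphic data. Choosing a local $\bar{\partial}_{L,0}$-holomorphic frame $\sigma$ on an open set $U \subset M$, one declares $\tilde{\psi}_t^{-1}(\sigma) := e^{-it\tilde{X}_\varphi}\sigma$, the analytic continuation in $s$ of $e^{-s\tilde{X}_\varphi}\sigma$ from real $s$ to $s = it$. Linearity of the lift along fibers makes $\tilde{\psi}_t$ a map of line bundles, and because $\pi \circ e^{-it\tilde{X}_\varphi} = e^{-itX_\varphi} \circ \pi$ the diagram commutes by the definition of $\psi_t$ as the $e^{-itX_\varphi}$-induced diffeomorphism. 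To see that $\tilde{\psi}_t$ intertwines $\bar{\partial}_{L,t}$ with $\bar{\partial}_{L,0}$ I would use the fact, established in \cite{LW1}, that $\psi_t^*J_0 = J_t$; the Cauchy--Riemann operator on $L$ pulls back accordingly, so the frames $e^{-it\tilde{X}_\varphi}\sigma$ are $\bar{\partial}_{L,t}$-holomorphic and give a trivialization of $\psi_t^*(L,\bar{\partial}_{L,0})$ realizing the isomorphism $\psi_t^*(L,\bar{\partial}_{L,0}) \cong (L,\bar{\partial}_{L,t})$.

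Finally, for the identity $e^{t\hat{\varphi}}s_0 = \Psi_t^{-1}(\psi_t^* s_0)$, both sides are obtained from the same local holomorphic datum by the same analytic continuation from real to imaginary time. On the left, Theorem \ref{thm7} produces $e^{t\hat{\varphi}}s_0$ by continuing $e^{s\hat{\varphi}}s_0$ to $s = t$; on the right, $\Psi_t^{-1}(\psi_t^* s_0)$ amounts to re-expressing $s_0$ in the frame $e^{-it\tilde{X}_\varphi}\sigma$, which under the equivariant-function-versus-section correspondence differs from the $\hat{\varphi}$-evolution only by the systematic factor of $i$ already built into $\tilde{X}_\varphi$. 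The two prescriptions therefore agree on any local holomorphic section where both are defined, and uniqueness of analytic continuation promotes this to the global equality for $s_0 \in \mathcal{H}_0$.

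The main obstacle is making the imaginary-time flow $e^{-it\tilde{X}_\varphi}$ rigorous as a genuine bundle map, rather than a formal operator on sections: one must check that the analytically continued frames exist on the same open sets where $\sigma$ does, transform consistently under changes of local frame, and patch to a global isomorphism. This is the $L$-level analogue of the analyticity argument already used for Theorem \ref{thm7}, and I would handle it by working in $T^n$-equivariant trivializations where the flow of $\tilde{X}_\varphi$ can be written explicitly in terms of $\varphi$ and the $\nabla$-parallel transport along $X_\varphi$.
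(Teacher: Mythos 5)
Your proposal follows essentially the same route as the paper: construct $\tilde{\psi}_{t}$ by applying the Lie series $e^{-it\tilde{X}_{\varphi}}$ of the Kostant prequantum lift $\tilde{X}_{\varphi}=X_{\varphi}^{H}+\eta(\varphi)$ to local holomorphic data in $T^{n}$-invariant trivializations (where, because $X_{\varphi}\varphi=X_{\varphi}(\beta(X_{\varphi}))=0$, the flow is the explicit rescaling $z\mapsto e^{t(\varphi-\beta(X_{\varphi}))}z$), check consistency under the holomorphic transition functions, and then deduce $e^{t\hat{\varphi}}s_{0}=\Psi_{t}^{-1}(\psi_{t}^{*}s_{0})$ from Kostant's identity $-i\tilde{X}_{\varphi}\tilde{s}=\widetilde{\hat{\varphi}s}$ iterated termwise. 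This matches the paper's proof in both structure and key ingredients, so no further comparison is needed.
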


For each $t \ge 0$, under the assumption $(*)$, $T^{n}$ acts on $\shH_{t}$.

 Let $\shH_{t}= \bigoplus_{\lambda \in \fot^{*}} \shH_{t, \lambda}$ be the weight decomposition with respect to this action. Let $\shH_{\mathrm{mix}, \lambda}$ be the subspace of $\shH_{\mathrm{mix}}$ consisting of distributional sections with supports inside $\mu^{-1}(\fot^{*}_{\ZZ})$.
 In \cite{LW2}, we showed that $\shH_{\mathrm{mix}, \lambda}$ is the $\lambda$-weight subspace of $\shH_{\mathrm{mix}}$.
 We denote the set of regular values of $\mu$ by $\fot^{*}_{\mathrm{reg}}$ and denote $\fot^{*}_{\mathrm{reg}} \cap \fot^{*}_{\ZZ}$ by $\fot^{*}_{\ZZ,\mathrm{reg}}$. Let $M^{\lambda}=\mu^{-1}(\lambda)$ be the level set.
 
\begin{definition}
For any $\lambda \in \fot^{*}_{\ZZ,\mathrm{reg}}$ and any $s \in \shH_{0,\lambda}$, we define the associated distributional section $\delta_{s} \in \Gamma(M,L^{-1})'$ by:
\begin{equation}
\delta_{s}(\tau)=\int_{M^{\lambda}}\langle s|_{M^{\lambda}},\tau|_{M^{\lambda}}\rangle \vol^{\lambda},
\end{equation}
for any test section $\tau \in \Gamma_{c}(M, L^{-1})$.
\end{definition}
 Finally, we show that "$\shH_{t, \lambda}$ converges to $ \shH_{\mathrm{mix}, \lambda}$" as $t$ goes to $\infty$, for any $\lambda \in \fot_{\ZZ,\mathrm{reg}}^{*}$ in the following sense:
\begin{theorem}(Theorem \ref{thm9})
Under the assumption $(*)$, for any $\lambda \in \fot_{\ZZ,\mathrm{reg}}^{*}$, and any holomorphic section $s_{0} \in \shH_{0,\lambda}$, 
 the family of sections $\{{C_{t}}s_{t}\}$, under 
$ \iota: \Gamma(M, L)  \rightarrow \Gamma_{c}(M,L^{-1})',$ 
weakly converges to $\delta_{s_{0}} \in \shH_{\mathrm{mix}, \lambda}$, as $t$ goes to $\infty$, 
i.e.
\begin{equation}
\lim_{t\rightarrow \infty}\iota({C_{t}}s_{t})=\delta_{s_{0}},
\end{equation}
where $s_{t}=e^{t\hat{\varphi}}s_{0}$ and $\delta_{s_{0}}$ is the distributional section associated to $s_{0}$. Here, for each $t \ge 0$, $C_{t}$ is a constant defined by $
 C_{t}= ||e^{t(\varphi -\sum_{j}(\mu_{j}-\lambda_{j})\frac{\partial \phi}{\partial \mu_{j}})}||^{-1}_{L_{1}}.
 $
\end{theorem}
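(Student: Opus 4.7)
The plan is to reduce the weak convergence to a Laplace (stationary–phase) asymptotic along the moment map, made possible by the fact that on a weight-$\lambda$ section $e^{t\hat\varphi}$ acts as multiplication by a $T$-invariant scalar whose phase factors through $\mu$ and is maximised precisely on $M^\lambda$. First I would make $s_t$ explicit: for $s_0\in\shH_{0,\lambda}$, Kostant's formula for the lifted $T$-action gives $\nabla_{X_{\mu_j}}s_0=i(\lambda_j-\mu_j)s_0$. Since $X_\varphi=\sum_j(\partial\phi/\partial\mu_j)X_{\mu_j}$ and $\hat\varphi=-i\nabla_{X_\varphi}+\varphi$, a one-line computation gives $\hat\varphi\,s_0=f\,s_0$ with
$f=\varphi-\sum_j(\mu_j-\lambda_j)\partial\phi/\partial\mu_j$,
so $s_t=e^{tf}s_0$ and $C_t s_t=e^{tf}s_0/\|e^{tf}\|_{L^1}$.

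Next I would analyse the phase. Writing $f=F\circ\mu$ with $F(y)=\phi(y)-\sum_j(y_j-\lambda_j)\partial\phi/\partial y_j$, a direct computation gives $dF(y)=-\mathrm{Hess}(\phi)(y)\,(y-\lambda)$, so strict convexity of $\phi$ makes $y=\lambda$ the unique critical point of $F$, a non-degenerate global maximum with negative-definite Hessian $-\mathrm{Hess}(\phi)(\lambda)$. Consequently $f$ attains its maximum exactly on $M^\lambda$, which is a smooth compact submanifold since $\lambda\in\fot^*_{\ZZ,\mathrm{reg}}$.

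The core step is Laplace's method along $\mu$. For any $\tau\in\Gamma_c(M,L^{-1})$,
\[
\iota(C_t s_t)(\tau)=\frac{\int_M e^{tf}\langle s_0,\tau\rangle\,\vol}{\int_M e^{tf}\,\vol}.
\]
Using the equivariant coarea decomposition induced by $\mu$ (valid in a tubular neighbourhood of $M^\lambda$ because $\lambda$ is regular), each integral becomes $\int_{\fot^*}e^{tF(y)}A(y)\,dy$ with $A(y)$ the fibre integral of $\langle s_0,\tau\rangle$ (resp.\ $1$) over $M^y$ against the induced Liouville measure. Since $F$ attains its unique non-degenerate maximum at $y=\lambda$, standard Laplace asymptotics give
\[
\int_M e^{tf}g\,\vol\;\sim\;\Bigl(\tfrac{2\pi}{t}\Bigr)^{n/2}\bigl(\det\mathrm{Hess}(\phi)(\lambda)\bigr)^{-1/2}e^{t\phi(\lambda)}A(\lambda),
\]
while the complementary region $\{f\le\phi(\lambda)-\delta\}$ contributes only $O(e^{t(\phi(\lambda)-\delta)})$ by compactness of $M$. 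Taking the ratio of numerator and denominator cancels the Gaussian prefactor and the exponential in $\phi(\lambda)$, yielding the desired limit $\int_{M^\lambda}\langle s_0,\tau\rangle\vol^\lambda$.

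The main obstacle I expect is aligning the measure-theoretic normalisations: the Duistermaat–Heckman density appearing in the Laplace computation must match exactly the Liouville measure $\vol^\lambda$ used in the definition of $\delta_{s_0}$, and this has to be reconciled with the specific constant $C_t$ defined via the $L^1$ norm. A secondary subtlety is that $\tau$ need not be of weight $-\lambda$, but $T$-invariance of $e^{tf}\vol$ averages $\langle s_0,\tau\rangle$ over torus orbits, so the non-isotypic pieces of $\tau$ drop out automatically — this matches the fact that the right-hand side integrates only over $M^\lambda$ and causes no extra trouble.
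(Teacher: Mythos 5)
Your proposal is correct and follows essentially the same route as the paper: both identify $s_t=e^{tf}s_0$ with $f=\varphi-\sum_j(\mu_j-\lambda_j)\tfrac{\partial\phi}{\partial\mu_j}$ a function of $\mu$ maximised exactly on $M^\lambda$, disintegrate the Liouville measure along $\mu$ near $M^\lambda$ (the paper via Guillemin's coisotropic embedding, you via the coarea formula), and conclude from the concentration of $e^{tf}/\|e^{tf}\|_{L^1}$ at $\mu=\lambda$. The only cosmetic differences are that the paper first reduces to $\lambda=0$, $n=1$ by the shifting trick and cites the Baier--Florentino--Mour\~ao--Nunes concentration lemma, whereas you work with general $\lambda$ directly from the weight condition and re-derive the concentration by Laplace's method.
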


Similar results for quantum spaces associated to K\"ahler polarizations converging to quantum spaces associated to real polarizations were carried out 
by Kirwin and Wu for symplectic vector spaces \cite{KW};
by Hall \cite{Hal} and Florentino, Matias, Mour\~ao and Nunes \cite{FMMN1,FMMN2} for cotangent bundles of Lie groups;
by Baier, Florentino, Mour\~ao and Nunes for toric varieties \cite{BFMN}; 
by Hamilton and Konno for flag varieties \cite{HK}.
There are many previous works by others on closely related problems for toric varieties \cite{ CS, KMN1,KMN4}, flag varieties \cite{BHKMN, GS3}, cotangent bundles of compact Lie groups \cite{KMN2, MNP}, toric degenerations \cite{HHK, HK3}, and so on \cite{An1,An2,An3,An4,An5, BBLU, BFHMN, CLL, HK1, HK2, Hi,LY1, LY2, MNR, RZ1, Th}.

\subsection*{Acknowledgement}
We would like to thank Siye Wu for insightful comments and helpful discussions. D. Wang would like to thank Yutung Yau, Kifung Chan and Qingyuan Jiang for many helpful discussions. We also thank the referees for valuable comments and suggestions for improvement. The work of N. Leung described in this paper was substantially supported by grants from the Research Grants Council of the Hong Kong Special Administrative Region, China (Project No. CUHK14301619, CUHK14301721, and CUHK14306322) and a direct grant from the Chinese University of Hong Kong. The work of D. Wang was supported by Hong Kong Research Grants Council grants GRF-2130654, 14305419, 14301622. The work of D. Wang is also supported by CAMGSD UIDB/04459/2020 and UIDP/04459/2020.

\section{Preliminaries}
 \subsection{Hamiltonian action}
Let $(M,\omega )$ be a symplectic manifold. For $f\in C^{\infty }(M,\mathbb{R
})$, the Hamiltonian vector field $X_{f}$ associated to $f$ is determined by 
$\imath _{X_{f}}\omega =-df$. Then the Poisson bracket of two functions $f, g \in C^{\infty}(M;\RR)$ can be defined by:
 $\{f, g\} = \omega(X_{f}, X_{g})$ and $$\Psi:  (C^{\infty}(M),\{\}) \rightarrow \mathrm{Vect}(M,\omega), \Psi(f)=X_{f}$$ is a Lie algebra homomorphism. Let $T^{n}$ be a torus of real dimension $n$
and $\rho :T^{n}\rightarrow \mathrm{Diff}(M,\omega )$ an action of $T^{n}$
on $M$ which preserves $\omega $. Differentiating $\rho $ at the identity
element, we have 
\begin{equation*}
d\rho :\mathfrak{t}\rightarrow \mathrm{Vect}(M,\omega ),~~~~\xi \mapsto \xi
^{\#}
\end{equation*}
where  $\mathfrak{t}$ is the Lie algebra of $T^{n}$ and $\xi ^{\#}$ is
called the fundamental vector field associated to $\xi $. 
The action of $T^{n}$ on $M$ is said to be {\em Hamiltonian} if $d\rho$ factors through $\Psi$.
This gives a $T^{n}$-equivariant map
$\mu :M\rightarrow \mathfrak{t}^{* }$
called the \emph{moment mapping}, satisfying:
$$\omega(-, \xi^{\#})=d\mu^{\xi}.$$

\subsection{ Polarizations on K\"ahler manifolds with $T$-symmetry} In this subsection, we recall the construction of a mixed polarization using the $T$-symmetry and $\shP_{J}$.
Let $H_{p}$ be the stabilizer of $T^{n}$ at a point $p \in M$. Denote by $\check{M}$ the union of $n$-dimensional orbits in $M$, that is,
$$\check{M}= \{p \in M| \dim H_{p} =0\},$$
which is an open dense subset in $M$.
For any point $p \in M$, consider the map $\rho_{p}: T^{n} \rightarrow M$ defined by $\rho_{p}(g)= \rho(g)(p)$. Let $\shI_{\RR} \subset TM$ be the singular distribution generated by fundamental vector fields in $\Im d\rho$, that is $(\shI_{\RR})_{p} = \Im d\rho_{p}(e)$. Let $\shD_{\RR} = (\ker d\mu) \subset TM$ be a singular distribution defined by the kernel of $d\mu$. 
    
\begin{definition}\cite[Definition 3.6]{LW1}\label{def3-0-1}  
 Let $\shD_{\CC} = \shD_{\RR} \otimes \CC$ and $\shI_{\CC} = \shI_{\RR} \otimes \CC$ be the complexification of $\shD_{\RR}$ and $\shI_{\RR}$ respectively.
 We define {\em the singular distribution $\shP_{\mathrm{mix}} \subset TM \otimes \CC$} by:
\begin{equation}
\shP_{\mathrm{mix}} = (\shP_{J} \cap\shD_{\CC} ) \oplus \shI_{\CC}.
\end{equation}
\end{definition}

 \begin{theorem}\cite[Theorem 3.8]{LW1}
Under the assumption $(*)$, $\shP_{\mathrm{mix}}$ is a singular polarization and smooth on $\check{M}$. Moreover, $\rank(\shP_{\mathrm{mix}} \cap \bar{\shP}_{mix}  \cap TM)|_{\check{M}}=n$.
\end{theorem}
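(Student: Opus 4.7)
The plan is to verify, on the open dense set $\check{M}$, the three defining properties of a Kähler--real mixed polarization — constant-rank-$m$ smooth subbundle, Lagrangian, Frobenius involutive — and then to compute the rank of $\shP_{\mathrm{mix}} \cap \bar{\shP}_{\mathrm{mix}} \cap TM$. The single pointwise identity powering the whole argument is $J\shI_{\RR} \cap \shD_{\RR} = 0$ on $\check{M}$, which follows from $d\mu^{\eta}(J\xi^{\#}) = -\omega(J\xi^{\#}, \eta^{\#}) = -g(\xi^{\#}, \eta^{\#})$ together with positive-definiteness of the Kähler metric on the pointwise linearly independent fundamental vector fields.

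First I would pin down ranks. On $\check{M}$, local freeness of the $T$-action gives $\rank \shI_{\RR} = n$, and non-degeneracy of $\omega$ together with $d\mu^{\xi} = -\imath_{\xi^{\#}}\omega$ gives $\rank d\mu = n$, so $\shD_{\RR} = \ker d\mu$ is a smooth rank-$(2m-n)$ subbundle. The pointwise identity yields $\shD_{\RR} \oplus J\shI_{\RR} = TM$, whence $\dim(\shD_{\RR} \cap J\shD_{\RR}) = 2m-2n$, and consequently $\shP_{J} \cap \shD_{\CC}$ is a smooth complex subbundle of rank $m-n$ (via the real isomorphism $a \mapsto \tfrac{1}{2}(a - iJa)$ sending $\shD_{\RR} \cap J\shD_{\RR}$ onto $T^{0,1}M \cap \shD_{\CC}$). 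The same identity restricted to $\shI_{\RR}$ gives $\shI_{\CC} \cap \shP_{J} = 0$, so the sum $\shP_{\mathrm{mix}} = (\shP_{J} \cap \shD_{\CC}) \oplus \shI_{\CC}$ is genuinely direct, of complex rank $m$, and smooth on $\check{M}$.

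For Lagrangianity, three pairings vanish: $\omega|_{\shP_{J} \cap \shD_{\CC}} = 0$ because $\shP_{J}$ is Lagrangian; $\omega(X, \xi^{\#}) = -d\mu^{\xi}(X) = 0$ whenever $X \in \shD_{\CC}$; and $\omega(\xi^{\#}, \eta^{\#}) = \{\mu^{\xi}, \mu^{\eta}\} = 0$ by commutativity of $T^{n}$. For Frobenius involutivity: $[\shI_{\CC}, \shI_{\CC}] = 0$ since $T^{n}$ is abelian; $\shP_{J}$ is involutive by integrability of $J$ and $\shD_{\CC}$ is involutive because its leaves on $\check{M}$ are the smooth $\mu$-fibres, so their intersection is involutive; and for a mixed bracket $[\xi^{\#}, X]$ with $X \in \shP_{J} \cap \shD_{\CC}$, the $T$-invariance of $J$ and $\mu$ gives $\mathcal{L}_{\xi^{\#}} X \in \shP_{J}$ and $d\mu^{\eta}(\mathcal{L}_{\xi^{\#}}X) = \mathcal{L}_{\xi^{\#}}(d\mu^{\eta}(X)) - (\mathcal{L}_{\xi^{\#}}d\mu^{\eta})(X) = 0$, so $\mathcal{L}_{\xi^{\#}} X \in \shP_{J} \cap \shD_{\CC} \subset \shP_{\mathrm{mix}}$.

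For the real-rank claim, $\shI_{\RR} \subset \shP_{\mathrm{mix}} \cap \bar{\shP}_{\mathrm{mix}} \cap TM$ is automatic. For the reverse, given real $v \in \shP_{\mathrm{mix}}$, use the unique decomposition $v = X + Y$ with $X \in \shP_{J} \cap \shD_{\CC}$ and $Y = \sum c_{j}\xi_{j}^{\#} \in \shI_{\CC}$; the reality $v = \bar{v}$ rewritten as $X - \bar{X} = \bar{Y} - Y = -2iw$ with $w = \sum \mathrm{Im}(c_{j})\xi_{j}^{\#} \in \shI_{\RR}$, then projected to type $(0,1)$, gives $X = -2iw^{0,1}$ where $w^{0,1} = \tfrac{1}{2}(w - iJw)$. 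Imposing $X \in \shD_{\CC}$ collapses to $Jw \in \shD_{\RR}$, and the key identity $J\shI_{\RR} \cap \shD_{\RR} = 0$ forces $w = 0$; consequently $Y$ is real, $X = v - Y$ is a real vector of type $(0,1)$, hence $X = 0$ and $v \in \shI_{\RR}$. I expect this final type-decomposition argument to be the main obstacle; the other parts are routine bookkeeping once the pointwise identity $J\shI_{\RR} \cap \shD_{\RR} = 0$ is in hand.
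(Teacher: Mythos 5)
Your argument is correct, and it follows essentially the same route as the original proof in \cite[Theorem 3.8]{LW1} (the present paper only quotes the statement and gives no proof of its own): the pointwise identity $J\shI_{\RR}\cap\shD_{\RR}=0$ on $\check{M}$, obtained from $d\mu^{\eta}(J\xi^{\#})=\pm g(\xi^{\#},\eta^{\#})$ and positivity of the K\"ahler metric, is exactly the key fact there, and the rank count, Lagrangian/involutivity checks, and the type-decomposition argument identifying $\shP_{\mathrm{mix}}\cap\bar{\shP}_{\mathrm{mix}}\cap TM$ with $\shI_{\RR}$ all match. The only blemishes are harmless sign-convention discrepancies (in $d\mu^{\xi}=\omega(-,\xi^{\#})$ versus $-\imath_{\xi^{\#}}\omega$, and in which of $a\mp iJa$ spans $T^{0,1}M$), which do not affect the conclusions.
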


\subsubsection{Symplectic reduction} Under the assumption $(*)$, we fix the following volume form on the level set $M^{\lambda}=\mu^{-1}(\lambda)$, where $\lambda$ is a regular value of the moment map.
We denote the set of regular values of $\mu$ by $\fot^{*}_{\mathrm{reg}}$, that is,
$$\fot^{*}_{\mathrm{reg}}= \{\lambda \in \fot^{*} | ~~\lambda \text{ is a regular value of} ~~\mu \}.$$
For any $\lambda \in \fot^{*}_{\mathrm{reg}}$, 
$T^{n}$ acts locally freely on $M^{\lambda}$.
For simplicity, we assume this action is free. We denote the quotient space $M^{\lambda}/T^{n}$ by $M_{\lambda}$. The projection mapping
$$\pi: M^{\lambda} \rightarrow M_{\lambda}$$ is a principal $T^{n}$-fibration. There exists a unique symplectic form $\omega_{\lambda}$ on $M_{\lambda}$ such that $\pi^{*}\omega_{\lambda}=i^{*} \omega$ with $i:M^{\lambda} \rightarrow M$ being the inclusion. We denote the volume form $\frac{1}{(m-n)!} \omega_{\lambda}^{m-n}$ on $M_{\lambda}$ by $\vol_{\lambda}$. Take a connection $\alpha \in \Omega^{1}(M^{\lambda}, \fot)$ on the principal bundle $M^{\lambda}$, so that $\frac{(m-n)!}{m!}\pi^{*}\vol_{\lambda}\wedge \alpha^{n}$ is a volume form on $M^{\lambda}$ denoted by $\vol^{\lambda}$, which is independent of the special choice of the connection $\alpha$ on $M^{\lambda}$.

 \subsection{Pre-quantum data}
 In this subsection, we first review the definition of $T^{n}$-invariant pre-quantum line bundle $L$. Then we state the result of Guillemin and Sternberg that $L$ always descends to the reduction space $M_{\lambda}$ in our setting (K\"ahler manifold equipped with $T^{n}$-symmetry). 
 
\begin{definition}
Let $(M, \omega)$ be a symplectic manifold, a {\em pre-quantum line bundle $(L,\nabla, h)$} on $M$ is a complex line bundle $L$ together with a Hermitian metric $h$ and Hermitian connection $\nabla$, such that the curvature form $F_{\nabla} = -i \omega$.  
\end{definition} 
When $(M,\omega,J)$ is K\"ahler, the connection $\nabla$ can be decomposed as
$\nabla=\nabla^{0,1} + \nabla^{1,0}$ with respect to $J$. As $F_{\nabla} = -i \omega$, the curvature form $F_{\nabla}$ is a (1,1) form and therefore $L$ is an ample holomorphic line bundle with $\nabla^{0,1}=\bar{\partial}_{L}$.
 
There is a canonical representation of the Lie algebra $\fot$ on the space of smooth sections of $L$ given by the operators 
\begin{equation}\label{eq2-0-1} \nabla_{\xi^{\#}} +i \mu^{\xi}, \xi \in \fot. \end{equation}

 {\em The pre-quantum line bundle $L$ is said to be $T^{n}$-invariant} if there exists a global action of $T^{n}$ on $L$ such that the induced action of $\fot$ on $\Gamma(M,L)$ is given by (\ref{eq2-0-1}). It is always possible if the $T^{n}$-action on $M$ is Hamiltonian (see \cite{Kos}).

Let $\fot_{\ZZ}$ be the kernel of the exponential map $\operatorname{exp}: \fot \rightarrow T^{n}$ and $\fot_{\ZZ}^{*} \subset \fot^{*}$ be the dual lattice of $\fot_{\ZZ}$. We denote the set of integral regular values of $\mu$ by $\fot^{*}_{\ZZ, \mathrm{reg}}$, that is, $\fot^{*}_{\ZZ, \mathrm{reg}}=\fot^{*}_{\mathrm{reg}} \cap \fot^{*}_{\ZZ}$.
Guillemin and Sternberg in \cite{GS1} showed that there are associated pre-quantum data on the reduction space $M_{\lambda}$, for $\lambda \in \fot^{*}_{\ZZ, \mathrm{reg}}$.
\begin{theorem}\label{thm2-0-1}\cite[Theorem 3.2]{GS1} There is a unique line bundle with connection  $( L_{\lambda}, \nabla_{\lambda})$ on $M_{\lambda}$ such that 
\begin{equation}
\pi^{*}{L_{\lambda}}=i^{*}L=: L^{\lambda}, ~~~ \text{and}~~\/~~  \pi^{*}\nabla_{\lambda}=i^{*}\nabla.
\end{equation}\end{theorem}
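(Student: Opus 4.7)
The plan is to realize $(L_\lambda, \nabla_\lambda)$ as the quotient of $(L^\lambda, i^*\nabla)$ by a \emph{twisted} $T^n$-action, where the twist is the unitary character of $T^n$ determined by the integrality of $\lambda$. The argument has three parts: construction of $L_\lambda$ as a quotient, descent of the connection, and uniqueness.

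For the construction, since $\lambda \in \fot^*_{\ZZ,\reg}$ the formula $\chi_\lambda(\exp\xi)=e^{i\lambda^\xi}$ defines a unitary character $\chi_\lambda: T^n \to U(1)$. Denoting the original $T^n$-action on $L$ (as bundle automorphisms) by $\rho^L$, I would introduce a modified action on the total space of $L^\lambda := i^*L$ by
\[
g \cdot_{\mathrm{tw}} \ell := \chi_\lambda(g)^{-1}\, \rho^L_g(\ell).
\]
Because $\rho^L$ covers the free $T^n$-action on $M^\lambda$, the twisted action is itself free on the total space of $L^\lambda$, so $L_\lambda := L^\lambda / T^n$ (with respect to $\cdot_{\mathrm{tw}}$) is a Hermitian line bundle over $M_\lambda = M^\lambda/T^n$, and the tautological surjection $L^\lambda \to L_\lambda$ yields a canonical $T^n$-equivariant identification $\pi^*L_\lambda \cong L^\lambda$.

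The crux of the proof is showing that $i^*\nabla$ is basic with respect to $\cdot_{\mathrm{tw}}$. Using the infinitesimal formula (\ref{eq2-0-1}) together with $\mu|_{M^\lambda} \equiv \lambda$, the twisted infinitesimal action on sections of $L^\lambda$ takes the form
\[
\mathcal{L}^{\mathrm{tw}}_{\xi^\#}\tilde{s} = -i\lambda^\xi\, \tilde{s} + \bigl(\nabla_{\xi^\#} + i\lambda^\xi\bigr)\tilde{s} = \nabla_{\xi^\#}\tilde{s},
\]
so a local section $\tilde{s}$ is twisted-invariant if and only if $\nabla_{\xi^\#}\tilde{s} = 0$ for every $\xi \in \fot$. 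For any such $\tilde{s}$, (i) $\iota_{\xi^\#}(\nabla\tilde{s}) = \nabla_{\xi^\#}\tilde{s} = 0$ shows that $\nabla\tilde{s}$ has no vertical component along the $T^n$-fibers of $\pi$, and (ii) $T^n$-invariance of $\nabla$ together with the fact that the twist is by a constant character forces $\nabla\tilde{s}$ to be twisted-invariant as a section of $T^*M^\lambda\otimes L^\lambda$. Combining (i) and (ii), $\nabla\tilde{s}$ descends to a section of $T^*M_\lambda\otimes L_\lambda$, which defines $\nabla_\lambda s_\lambda$ where $s_\lambda$ is the section of $L_\lambda$ corresponding to $\tilde{s}$ under $\pi^*L_\lambda \cong L^\lambda$; patching over local trivializations and checking the Leibniz rule then yields a global connection $\nabla_\lambda$ with $\pi^*\nabla_\lambda = i^*\nabla$.

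For uniqueness, any second pair $(L'_\lambda,\nabla'_\lambda)$ satisfying the same pullback relations yields a canonical isomorphism $\pi^*L_\lambda \cong L^\lambda \cong \pi^*L'_\lambda$ that is $T^n$-equivariant and identifies the pulled-back connections with $i^*\nabla$; descending along $\pi$ then gives the required connection-preserving isomorphism $L_\lambda \cong L'_\lambda$. The principal obstacle is the descent step: without integrality of $\lambda$, the character $\chi_\lambda$ is only multivalued, the twisted action fails to globalize, and local twisted-invariant frames of $L^\lambda$ (whose fiberwise model on $T^n$ is the function $e^{-i\langle\lambda,\theta\rangle}$) cannot be patched across the principal bundle $\pi:M^\lambda \to M_\lambda$.
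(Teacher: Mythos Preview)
Your argument is correct and is essentially the classical Guillemin--Sternberg construction. Note, however, that the paper does not supply its own proof of this statement: it is quoted verbatim from \cite[Theorem~3.2]{GS1} as background, so there is no in-paper proof to compare against. Your route---twisting the lifted $T^n$-action on $L^\lambda$ by the unitary character $\chi_\lambda$ (well-defined exactly because $\lambda\in\fot^*_{\ZZ}$), observing via the Kostant formula (\ref{eq2-0-1}) that the twisted infinitesimal action on sections over $M^\lambda$ reduces to $\nabla_{\xi^\#}$, and then descending both the bundle and the connection through the free quotient $\pi:M^\lambda\to M_\lambda$---is precisely the mechanism in the original reference. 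One minor quibble: freeness of the twisted action ``on the total space of $L^\lambda$'' is not quite the relevant hypothesis for the quotient to be a line bundle; what you use (and what holds) is that the action is by linear bundle automorphisms covering a free action on the base $M^\lambda$.
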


\subsection{Quantum space $\shH_{\mathrm{mix}}$ associated to mixed polarization $\shP_{\mathrm{mix}}$}
Recall that there is a natural way to embed the space of smooth sections into the space of distributional sections using the Liouville measure $\vol_{M}=\frac{\omega^{m}}{m!}$. That is, for any test section $\tau \in \Gamma_{c} (M, L^{-1})$,
 \begin{align*}
 \iota: \Gamma(M,L)  \rightarrow  \Gamma_{c} (M, L^{-1})' , ~~~s \mapsto( \iota s)(\tau) = \int_{M} \langle s, \tau \rangle \vol_{M}.
\end{align*}
Then the quantum space $\shH_{\mathrm{mix}}$ can be described as:
\begin{align*}
    \shH_{\mathrm{mix}} =  \Gamma_{c} (M, L^{-1})' \cap \Ker( \nabla)|_{ \shP_{\mathrm{mix}}}. \end{align*}
The following theorem says that $\shH_{\mathrm{mix}, \lambda}$ consists of distributional sections with supports inside $\mu^{-1}(\fot^{*}_{\ZZ})$ and $\shH_{\mathrm{mix}, \lambda}$ is the $\lambda$-weight subspace of $\shH_{\mathrm{mix}}$.
 
 \begin{theorem}\cite[Theorem 3.2]{LW2} Under the assumption $(*)$,  
\begin{enumerate}
\item given any $\delta \in \shH_{\mathrm{mix}}$, we have $ \supp \delta \subset \bigcup _{\lambda \in \fot^{*}_{\ZZ} } \mu^{-1} (\lambda)$. This gives the following decomposition 
$$\shH_{\mathrm{mix}} = \bigoplus_{\lambda \in \fot_{\ZZ}^{*}} \shH_{\mathrm{mix}, \lambda},$$ 
where $\shH_{\mathrm{mix},\lambda}=\{\delta \in \shH_{\mathrm{mix}} \mid \mathrm{supp} ~\delta \subset \mu^{-1}(\lambda)\};$
\item for any  $\lambda \in \fot^{*}_{\ZZ}$, $\shH_{\mathrm{mix},\lambda}$ is a $\lambda$-weight subspace in $\shH_{\mathrm{mix}}$. 
\end{enumerate}
 Therefore the decomposition
$\shH_{\mathrm{mix}} = \bigoplus_{\lambda \in \fot_{\ZZ}^{*}} \shH_{\mathrm{mix}, \lambda}$ is the weight decomposition with respect to $T^{n}$-action.
\end{theorem}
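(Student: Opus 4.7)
The plan is to reduce the claim to a Laplace-type concentration on the moment image. The first step is to identify $\hat\varphi$ on the weight-$\lambda$ subspace: from \eqref{eq2-0-1}, any $s_0\in\shH_{0,\lambda}$ satisfies $\nabla_{\xi^{\#}}s_0=i(\lambda^{\xi}-\mu^{\xi})s_0$, so combining this with $X_{\varphi}=\sum_{j}(\partial\phi/\partial\mu_j)\xi_j^{\#}$ and $\hat{\varphi}=-i\nabla_{X_{\varphi}}+\varphi$ yields
\begin{equation*}
\hat{\varphi}|_{\shH_{0,\lambda}}=F(\mu),\qquad F(\mu):=\varphi(\mu)-\sum_{j}(\mu_j-\lambda_j)\frac{\partial\phi}{\partial\mu_j}(\mu),
\end{equation*}
so that $s_t=e^{t\hat\varphi}s_0=e^{tF(\mu)}s_0$ as a smooth section of $L$. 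The theorem then becomes the weak convergence, as $t\to\infty$, of the positive measure $C_t\,e^{tF(\mu)}\vol_M$ on $M$ to the measure on $M^{\lambda}$ induced by $\vol^{\lambda}$.

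The second step is to locate and classify the maximum of $F$. Strict convexity of $\phi$ gives the tangent-plane inequality $\phi(\mu)-\langle\mu-\lambda,\nabla\phi(\mu)\rangle\le\phi(\lambda)$, with equality iff $\mu=\lambda$. Hence $F\le F(\lambda)=\phi(\lambda)$ on $\fot^{*}$, with a unique global maximum at $\mu=\lambda$, and a direct computation shows the Hessian there is $-\phi''(\lambda)$, which is negative definite. Since $\lambda\in\fot^{*}_{\ZZ,\mathrm{reg}}$, this maximum lies in the interior of the moment polytope and on the principal stratum $\check{M}$, where the level sets $M^{\xi}$ depend smoothly on $\xi$.

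The third and main step is Laplace's method. For a test section $\tau\in\Gamma_c(M,L^{-1})$ one writes
\begin{equation*}
\iota(C_t s_t)(\tau)=C_t\int_M e^{tF(\mu)}\langle s_0,\tau\rangle\,\vol_M,
\end{equation*}
and, using the coarea / Duistermaat--Heckman pushforward by $\mu$, decomposes $\vol_M$ near the regular value $\lambda$ as $\vol^{\xi}\wedge d^n\xi$ up to a constant coming from the normalization $\vol^{\xi}=\frac{(m-n)!}{m!}\pi^{*}\vol_{\xi}\wedge\alpha^{n}$. This turns the numerator into $\int_{\fot^{*}}e^{tF(\xi)}G(\xi)\,d^n\xi$ with $G(\xi):=\int_{M^{\xi}}\langle s_0,\tau\rangle\vol^{\xi}$, and the denominator $C_t^{-1}=\|e^{tF(\mu)}\|_{L_1}$ into an analogous Gaussian integral. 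Because $\lambda$ is regular and $\tau$ compactly supported, $G$ is smooth near $\lambda$. Laplace's method at the nondegenerate interior maximum $\xi=\lambda$ then produces a common prefactor $e^{tF(\lambda)}(2\pi/t)^{n/2}\det(-\phi''(\lambda))^{-1/2}$ in numerator and denominator; the definition of $C_t$ in the statement is calibrated so that this cancellation leaves exactly
\begin{equation*}
\lim_{t\to\infty}\iota(C_t s_t)(\tau)=G(\lambda)=\int_{M^{\lambda}}\langle s_0,\tau\rangle\vol^{\lambda}=\delta_{s_0}(\tau).
\end{equation*}

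The main obstacle is the bookkeeping of the various normalization constants so that the Laplace cancellation produces $G(\lambda)$ on the nose rather than a nontrivial multiple: this requires a careful local computation on $\check{M}$ using moment-map fibrations together with the specific choice of $\vol^{\lambda}$ and of the $L_1$ norm defining $C_t$. A subordinate issue is controlling the contributions from $\partial\mu(M)$ and from the exceptional stratum $M\setminus\check{M}$, but the strictly dominant interior maximum of $F$ at $\lambda$, coupled with compactness of $M$, makes these $O(e^{-ct})$ after normalization by $C_t$ for some $c>0$.
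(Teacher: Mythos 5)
Your proposal does not address the statement you were asked to prove. The theorem in question (quoted from \cite[Theorem 3.2]{LW2}) is a structural result about the quantum space $\shH_{\mathrm{mix}}$ of the \emph{mixed} polarization: it asserts (1) that every distributional section $\delta$ that is covariant constant along $\shP_{\mathrm{mix}}$ has support contained in the union of the integral level sets $\mu^{-1}(\lambda)$, $\lambda\in\fot^{*}_{\ZZ}$, and (2) that the subspace of such $\delta$ supported on a single $\mu^{-1}(\lambda)$ is exactly the $\lambda$-weight space for the $T^{n}$-action. What you have written instead is a Laplace-method argument for the weak convergence $\lim_{t\to\infty}\iota(C_{t}s_{t})=\delta_{s_{0}}$, which is the content of Theorem \ref{thm9} later in the paper. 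Even if every step of your concentration argument were airtight, it would establish neither the Bohr--Sommerfeld support condition nor the identification of $\shH_{\mathrm{mix},\lambda}$ with a weight space: nowhere do you use the defining equations $\nabla_{\xi}\delta=0$ for $\xi\in\Gamma(M,\shP_{\mathrm{mix}})$, and the space $\shH_{\mathrm{mix}}$ itself never enters your argument except as the target of a limit.

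A proof of the actual statement has to start from the fact that $\shI_{\CC}\subset\shP_{\mathrm{mix}}$, so any $\delta\in\shH_{\mathrm{mix}}$ satisfies $\nabla_{\xi^{\#}}\delta=0$ for all $\xi\in\fot$. Restricted to a $T^{n}$-orbit through $p$, the connection $\nabla$ is flat along the orbit and its holonomy around the loop generated by $\xi\in\fot_{\ZZ}$ is determined by $\langle\mu(p),\xi\rangle$; a nonzero covariant-constant distributional section can be supported near that orbit only if this holonomy is trivial, i.e.\ only if $\mu(p)\in\fot^{*}_{\ZZ}$. This is what forces $\supp\delta\subset\bigcup_{\lambda\in\fot^{*}_{\ZZ}}\mu^{-1}(\lambda)$ and yields the direct-sum decomposition by (clopen, since the integral levels are separated) support components. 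Part (2) then follows from the formula \eqref{eq2-0-1} for the induced $\fot$-action on sections: on a $\delta$ with $\nabla_{\xi^{\#}}\delta=0$ and $\supp\delta\subset\mu^{-1}(\lambda)$, the operator $\nabla_{\xi^{\#}}+i\mu^{\xi}$ acts as multiplication by $i\lambda(\xi)$, so $\shH_{\mathrm{mix},\lambda}$ is precisely the $\lambda$-weight space. None of this machinery appears in your proposal, so the gap is not a missing estimate but a mismatch between the claim and the argument.
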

For any $\lambda \in \fot^{*}_{\ZZ,\mathrm{reg}}$ and for any $s \in H^{0}(M_{\lambda}, L_{\lambda})$, there is an associated distributional section $\delta^{s} \in \Gamma_{c}(M, L^{-1})'$ defined by:
\begin{equation}\label{eq1-0-1} \delta^{s} (\tau)= \int_{M^{\lambda}} \left\langle \pi^{*}s,\tau|_{M^{\lambda}} \right\rangle \vol^{\lambda},
\end{equation}
for any $\tau \in \Gamma_{c}(M^, L^{-1})$,
where $\vol^{\lambda}$ is the volume form on $M^{\lambda}$ and $\pi: M^{\lambda} \rightarrow M_{\lambda}$ is the quotient map.
In \cite{LW2}, we showed that $\delta^{s} \in \shH_{\mathrm{mix}, \lambda}$. Furthermore,
 \begin{theorem} \label{thm1-5}\cite[Theorem 3.12]{LW2}
 For any $\lambda \in \fot^{*}_{\ZZ, \mathrm{reg}}$, the map
 $$ \kappa: H^{0}(M_{\lambda}, L_{\lambda}) \rightarrow \shH_{\mathrm{mix},\lambda},$$
 defined by $s \mapsto \kappa(s)=\delta^{s}$,
 is an isomorphism of vector spaces.
\end{theorem}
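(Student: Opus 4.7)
The plan is to prove that $\kappa$ is a linear isomorphism in three stages—well-definedness, injectivity, and surjectivity—each of which is carried out in a $T^{n}$-invariant tubular neighborhood of the regular level set $M^{\lambda}$ and uses the Guillemin-Sternberg descent from Theorem \ref{thm2-0-1}.

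For well-definedness, $\supp \delta^{s} \subset M^{\lambda}$ by construction. For any $\xi \in \fot$, $T^{n}$-invariance of $\pi^{*}s$ and $\vol^{\lambda}$ forces $\delta^{s}$ to be of weight $\lambda$, which differentiates to $(\nabla_{\xi^{\#}} + i\mu^{\xi})\delta^{s} = i\lambda^{\xi}\delta^{s}$; since $\mu^{\xi} = \lambda^{\xi}$ on $\supp \delta^{s}$ this collapses to $\nabla_{\xi^{\#}}\delta^{s} = 0$, giving the polarization condition along $\shI_{\CC}$. For directions in $\shP_{J}\cap\shD_{\CC}$, the identity $\pi^{*}\nabla_{\lambda} = i^{*}\nabla$ reduces the polarization condition to the pulled-back condition $\bar{\partial}_{L_{\lambda}}s = 0$, which holds by hypothesis. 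Injectivity is then immediate: pairing $\delta^{s}$ with $T^{n}$-invariant test sections $\pi^{*}\tilde{\tau}$ compactly supported near $M^{\lambda}$ and integrating out the $T^{n}$-fibre via any connection $\alpha$ yields $\delta^{s}(\pi^{*}\tilde{\tau}) = c\int_{M_{\lambda}}\langle s, \tilde{\tau}\rangle \vol_{\lambda}$ for a nonzero constant $c$, so $\delta^{s} = 0$ forces $s = 0$.

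The main work lies in surjectivity. Given $\delta \in \shH_{\mathrm{mix},\lambda}$ with $\supp \delta \subset M^{\lambda}$, the Marle-Guillemin-Sternberg equivariant normal form identifies a $T^{n}$-invariant tubular neighborhood of $M^{\lambda}$ symplectomorphically with a neighborhood of the zero section in $T^{*}T^{n} \times M_{\lambda}$. This yields local coordinates $(q_{1},\dots,q_{n}, p_{1},\dots,p_{n})$ on the $T^{*}T^{n}$ factor with $p_{i} = \mu_{i} - \lambda_{i}$, together with local holomorphic coordinates $(z_{1},\dots, z_{m-n})$ on $M_{\lambda}$. Picking a $T^{n}$-invariant trivialization of $L$ whose connection one-form equals $i\sum_{i} p_{i}\, dq_{i} + \pi^{*}\beta_{\lambda}$, the mixed polarization is spanned by $\partial/\partial q_{i}$ and $\partial/\partial \bar{z}_{j}$. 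Consequently, $\delta$ admits a local representation $\sum_{\alpha} g_{\alpha}(q, z, \bar{z})\, \partial_{p}^{\alpha}\delta_{0}(p)$. The $\lambda$-weight condition forces the $g_{\alpha}$ to be $q$-independent, and combined with the distributional identity $p_{i}\, \partial_{p}^{\alpha}\delta_{0}(p) = -\alpha_{i}\, \partial_{p}^{\alpha - e_{i}}\delta_{0}(p)$ it eliminates every multi-index $\alpha \neq 0$. The polarization condition along the complex directions in $\shP_{\mathrm{mix}}$ then reduces to $\bar{\partial}g_{0} = 0$. Matching local pieces via a $T^{n}$-invariant partition of unity and invoking the uniqueness in Theorem \ref{thm2-0-1} produces a global $s \in H^{0}(M_{\lambda}, L_{\lambda})$ with $\kappa(s) = \delta$.

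The hardest step will be the order-zero analysis just sketched: one must show that polarization flatness along $\shI_{\CC}$ combined with the weight condition rules out any nontrivial transverse jet of $\delta$ along $M^{\lambda}$, so that $\delta$ really is a codimension-$n$ current of order zero rather than a higher-order current carrying normal derivatives of a delta distribution. All remaining ingredients—the descent $(L_{\lambda}, \nabla_{\lambda})$ of Theorem \ref{thm2-0-1}, the equivariant normal form, and the assembly of a global holomorphic section on $M_{\lambda}$—are standard once that key vanishing is established.
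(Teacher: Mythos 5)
The paper does not actually prove this statement: Theorem \ref{thm1-5} is quoted verbatim from \cite[Theorem 3.12]{LW2} as background, so there is no internal proof to compare your argument against. Judged on its own terms, your outline is the natural one and is essentially sound: well-definedness splits along the two summands of $\shP_{\mathrm{mix}}$ (orbit directions handled by the weight/support interplay, the $\shP_{J}\cap\shD_{\CC}$ directions by the Guillemin--Sternberg descent $\pi^{*}\nabla_{\lambda}=i^{*}\nabla$), injectivity follows by fibre integration, and surjectivity is the real content, requiring the order-zero analysis of a current supported on $M^{\lambda}$ in the coisotropic-embedding normal form. This matches the strategy one would expect from \cite{LW2} and from the toric antecedent \cite{BFMN}.

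One step in your surjectivity argument is stated in the wrong logical order and, as written, would not close. You claim the $\lambda$-weight condition forces the coefficients $g_{\alpha}$ to be $q$-independent and that the identity $p_{i}\,\partial_{p}^{\alpha}\delta_{0}=-\alpha_{i}\,\partial_{p}^{\alpha-e_{i}}\delta_{0}$ then eliminates $\alpha\neq 0$. But for a higher-order transverse current the multiplication operator $\mu^{\xi}$ does \emph{not} act as the scalar $\lambda(\xi)$, so the weight condition cannot be read off coefficientwise as $q$-independence before the order has been controlled. The clean route is: orbit-direction flatness $\nabla_{\xi^{\#}}\delta=0$ (which holds because $\shI_{\CC}\subset\shP_{\mathrm{mix}}$) combined with the weight-$\lambda$ condition $(\nabla_{\xi^{\#}}+i\mu^{\xi})\delta=i\lambda(\xi)\delta$ yields $(\mu^{\xi}-\lambda(\xi))\delta=0$, i.e.\ $p_{i}\delta=0$ for all $i$; your distributional identity then forces $g_{\alpha}=0$ for every $\alpha\neq 0$ (using that a distribution supported on a closed submanifold has locally finite transverse order, so the sum over $\alpha$ is finite). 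Only \emph{after} that does orbit-flatness reduce to $\partial_{q_{i}}g_{0}=0$, giving the $q$-independence, and $\bar{\partial}$-flatness gives holomorphy of $g_{0}$ on $M_{\lambda}$. With that reordering, and with the (glossed-over but routine) compatibility of $\vol_{M}$, $\vol^{\lambda}$ and $\vol_{\lambda}$ in the integration-by-parts defining $\nabla_{\xi}\delta$, your proposal is a correct proof sketch.
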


\subsection{A one-parameter family of K\"ahler polarizations}
We first recall Mour\~{a}o and Nunes's results in \cite{MN}, which provide the explicit formula for the K\"ahler potentials under a small complex time flow. 
Then we recall the long time existence of the one-parameter family of complex structures $J_{t}$ of $M$ given by the imaginary time flow in our setting (see \cite{LW1}).

Let $(M, \omega, J_{0})$ be a real analytic compact K\"ahler manifold. Let $h$ be an analytic function on $M$ and $X_{h}$ be the associated Hamiltonian vector field. Mour\~{a}o and Nunes in \cite{MN} showed that 
there exists $T >0$ such that for each $t < T$ there exists a complex structure $J_{t}$ given by applying $e^{-it X_{h}}$ to the local $J_{0}$-holomorphic coordinates and $(M, \omega,J_{t})$ is a K\"ahler manifold. Moreover,

\begin{theorem}\label{thm2-0-11}\cite[Theorem 4.1]{MN}
Let $\rho_{0}$ be a local analytic K\"ahler potential for $(M,\omega, J_{0})$. A local K\"ahler potential for $(M, \omega, J_{t})$ is then given by 
\begin{equation}\rho_{t}=2\Im (\frac{i}{2}e^{-it X_{h}}\rho_{0}-it h -\alpha_{-it}),
\end{equation}
where $\alpha_{-it}$ is the analytic continuation in $t$ of $\alpha_{t}=\int_{0}^{t} e^{t'X_{h}(\beta(X_{h}))}dt'$, $\beta$ is defined by $\beta^{0,1}=\frac{i}{2}\bar{\partial}_{0}\rho_{0}$, with $\bar{\partial}_{0}$ the $\bar{\partial}$-operator relative to the complex structure $J_{0}$.
\end{theorem}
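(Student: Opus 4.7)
The plan is to derive the closed form for $\rho_t$ by specializing the Mour\~{a}o--Nunes formula (Theorem \ref{thm2-0-11}) to the Hamiltonian $h = \varphi = \phi \circ \mu$ and exploiting the $T^n$-invariance of $\rho_0$ to collapse both the complex-time flow and the integral defining $\alpha_t$. The key structural fact is the decomposition
\begin{equation*}
X_\varphi \;=\; \sum_j \bigl((\partial \phi / \partial \mu_j)\circ \mu\bigr)\, X_{\mu_j},
\end{equation*}
where each $X_{\mu_j}$ is a fundamental vector field of the $T^n$-action (so tangent to the torus orbits) and each coefficient is pulled back from $\fot^*$ via $\mu$, hence $T^n$-invariant on $M$. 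Thus $X_\varphi$ annihilates every $T^n$-invariant function on $M$.

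First I would verify $e^{-itX_\varphi}\rho_0 = \rho_0$: because $\rho_0$ is $T^n$-invariant, each $X_{\mu_j}(\rho_0) = 0$, hence $X_\varphi(\rho_0) = 0$ and the complex-time flow leaves $\rho_0$ fixed. Second I would establish $X_\varphi(\beta(X_\varphi)) = 0$, which is what makes $\alpha_t$ elementary. Since $J_0$ and $\rho_0$ are $T^n$-invariant, so is $\beta = d_0^c \rho_0$, giving $L_{X_{\mu_j}}\beta = 0$; combined with $[X_{\mu_j}, X_{\mu_k}] = 0$ (abelian action), one obtains $X_{\mu_j}(\beta(X_{\mu_k})) = 0$. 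The coefficients $(\partial\phi/\partial\mu_k)\circ\mu$ are $T^n$-invariant and therefore also annihilated by each $X_{\mu_j}$. Expanding $\beta(X_\varphi) = \sum_k \bigl((\partial\phi/\partial\mu_k)\circ\mu\bigr)\, \beta(X_{\mu_k})$ and applying the Leibniz rule delivers the vanishing. Consequently $e^{t'X_\varphi}(\beta(X_\varphi)) = \beta(X_\varphi)$ is $t'$-independent, so $\alpha_t = t\,\beta(X_\varphi)$ and its analytic continuation is $\alpha_{-it} = -it\,\beta(X_\varphi)$.

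Plugging these into Theorem \ref{thm2-0-11} and taking imaginary parts of the three separately real quantities yields, for small $t$,
\begin{equation*}
\rho_t \;=\; 2\,\Im\!\Bigl(\tfrac{i}{2}\rho_0 \,-\, it\varphi \,+\, it\,\beta(X_\varphi)\Bigr) \;=\; \rho_0 \,-\, 2t\varphi \,+\, 2t\,\beta(X_\varphi),
\end{equation*}
which is manifestly $T^n$-invariant. To upgrade from a neighborhood of $t=0$ to all $t > 0$, I would invoke the long-time existence of $J_t$ from \cite{LW1} together with real-analyticity in $t$: on a small interval both $J_t$ and the candidate $\rho_t$ depend analytically on $t$ and satisfy $i\partial_t\bar\partial_t\rho_t = \omega$ by Mour\~{a}o--Nunes; since the right-hand side is linear in $t$, analytic continuation along the ray yields the identity for all $t \geq 0$.

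The main obstacle is the identity $X_\varphi(\beta(X_\varphi)) = 0$: $\beta$ is only a local one-form whose definition depends on the choice of local K\"ahler coordinates, so the $T^n$-invariance needed to deduce the vanishing must be handled carefully via Cartan's formula and the commuting of torus orbits, rather than being read off from a global object. Once this vanishing is in hand, the remainder of the proof reduces to term-by-term substitution and separating real and imaginary parts.
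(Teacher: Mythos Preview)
Your proposal does not address the stated theorem. Theorem~\ref{thm2-0-11} is quoted verbatim from \cite[Theorem~4.1]{MN}; the present paper does not prove it and offers no argument for it---it is used as a black box. What you have written is instead a proof of Theorem~\ref{thm3-4-2}, which \emph{specializes} Theorem~\ref{thm2-0-11} to the Hamiltonian $h=\varphi=\phi\circ\mu$ under the $T^n$-invariance hypothesis~$(*)$. You explicitly invoke Theorem~\ref{thm2-0-11} as input (``specializing the Mour\~{a}o--Nunes formula''), so your argument assumes the very statement you were asked to establish.

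If we read your proposal as an attempted proof of Theorem~\ref{thm3-4-2}, then it is correct and follows the paper's own argument essentially step for step: the paper isolates the vanishing $X_\varphi(\beta(X_\varphi))=0$ as Lemma~\ref{lem4-2-2}, deduces $\alpha_{-it}=-it\,\beta(X_\varphi)$ and $e^{-itX_\varphi}\rho_0=\rho_0$ from $T^n$-invariance, substitutes into the Mour\~{a}o--Nunes formula, and then extends from small $t$ to all $t>0$ by appealing to the long-time existence of $J_t$ established in \cite{LW1}. Your handling of the extension via real-analyticity is slightly more explicit than the paper's (which simply cites \cite{BG} and \cite{LW1}), but the logic is the same.
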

\begin{remark}
 Mour\~{a}o and Nunes in \cite{MN} showed the above result is true for any complex number $\tau$ with $|\tau|<T$ instead of purely imaginary time $-it$.
\end{remark}
The following kind of  Lie series was first introduced by Gr\"obner in \cite{Gr} and was used \cite{MN} and \cite{LW1}.

\begin{definition}\cite[Definition 3.10]{LW1}\label{def3-2-1} Let $X$ be a smooth real vector field on a smooth manifold $M$. For a smooth function $f \in C^{\infty}(M)$, we say that $e^{itX}$ {\em can be applied to $f$ completely}, if the Lie series $e^{itX} f := \sum_{k=0}^{\infty} \frac{(it)^{k}}{k!}X^{k}(f) $
is absolutely and uniformly convergent on compact subsets in $M \times \RR$.
\end{definition}

\begin{lemma}\cite[Lemma 3.15]{LW1}\label{lem3-1-1}
 If $e^{itX}$ can be applied to $f, g \in C^{\infty}(M)$ completely, then  $e^{itX}$ can be applied to $f+g$ and $fg$ completely. Moreover,
\begin{enumerate}
\item[(i)] $e^{itX}(f +g)=e^{itX}f +e^{itX}g$;
\item[(ii)]$e^{itX}(fg)=(e^{itX}f)(e^{itX}g)$.
\end{enumerate}
\end{lemma}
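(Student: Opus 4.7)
\emph{Plan.} Part (i) is formal. Since each $X^k$ is $\RR$-linear, $X^k(f+g) = X^k(f) + X^k(g)$ termwise, so the Lie series for $f+g$ is the termwise sum of the Lie series for $f$ and $g$. Absolute-uniform convergence on compact subsets of $M \times \RR$ is preserved under termwise addition via $|X^k(f+g)| \le |X^k(f)| + |X^k(g)|$. I would dispatch (i) in a single line.

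For part (ii) the main algebraic input is the Leibniz rule for iterated vector-field derivatives,
\[X^k(fg) = \sum_{j=0}^{k} \binom{k}{j}\, X^j(f)\, X^{k-j}(g),\]
proved by a one-line induction on $k$. Substituting this identity into the Lie series for $fg$ and regrouping by the pair $(j,k-j)$ identifies $\sum_{k} \frac{(it)^k}{k!} X^k(fg)$ with the formal Cauchy product of the Lie series $e^{itX}f$ and $e^{itX}g$. Two issues then remain: justifying the rearrangement of the double sum, and verifying that the resulting Lie series for $fg$ is absolutely and uniformly convergent on compact subsets of $M \times \RR$.

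Both issues are handled simultaneously by the dominated estimate
\[\sum_{k \ge 0} \frac{|t|^k}{k!}\bigl|X^k(fg)\bigr| \le \Bigl(\sum_{j \ge 0} \frac{|t|^j}{j!}\bigl|X^j(f)\bigr|\Bigr)\Bigl(\sum_{l \ge 0} \frac{|t|^l}{l!}\bigl|X^l(g)\bigr|\Bigr),\]
whose right-hand side is finite and uniformly bounded on compact subsets of $M \times \RR$ by the hypothesis on $f$ and $g$. This shows that $e^{itX}$ can be applied to $fg$ completely in the sense of Definition \ref{def3-2-1}, and it simultaneously legitimizes the rearrangement of the Cauchy product (either by Fubini for series, or by Mertens' theorem applied uniformly on compacts), yielding $e^{itX}(fg) = (e^{itX}f)(e^{itX}g)$.

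I do not anticipate any genuine obstacle: the whole argument reduces to the Leibniz rule followed by a standard Cauchy-product argument, with all analytic technicalities absorbed into the hypothesis that the Lie series for $f$ and for $g$ individually converge absolutely and uniformly on compact subsets of $M \times \RR$.
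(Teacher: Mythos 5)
Your argument is correct and is essentially the standard one: linearity for (i), and the iterated Leibniz rule $X^k(fg)=\sum_{j}\binom{k}{j}X^j(f)X^{k-j}(g)$ combined with the Cauchy-product/dominated estimate for (ii); this paper itself only cites the lemma from \cite{LW1}, where the proof proceeds along the same lines. No gaps.
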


\begin{theorem}\label{thm3-2}\cite[Theorem 1.2]{LW1}
Under the assumption $(*)$, for any $t >0$, there exists a global complex structure $J_{t}$ given by applying $e^{-itX_{\varphi}}$ to local holomorphic coordinates and a unique biholomorphism:
 $$\psi_{t}: (M,J_{t}) \rightarrow (M,J_{0}=J),$$
 which acts as $e^{-itX_{\varphi}}$ on local $J$-holomorphic coordinates.
\end{theorem}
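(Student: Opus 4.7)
The plan is to reduce the claim to a classical Laplace-type asymptotic on the moment polytope, after first converting $e^{t\hat\varphi}$ acting on the weight-$\lambda$ section $s_0$ into multiplication by an explicit positive function.

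Step one is the algebraic reduction. Expanding $X_\varphi = \sum_j\phi_j(\mu)\xi_j^\#$ with $\phi_j=\partial\phi/\partial\mu_j$ in a basis $\{\xi_j\}$ dual to the coordinates $\mu_j$, and using the weight-$\lambda$ identity $\nabla_{\xi_j^\#}s = i(\lambda_j-\mu_j)s$ (which holds for \emph{any} smooth section of weight $\lambda$, not only holomorphic ones), a direct calculation gives
\begin{equation*}
\hat\varphi\,s \;=\; (-i\nabla_{X_\varphi}+\varphi)s \;=\; \Bigl(\varphi - \sum_j(\mu_j-\lambda_j)\phi_j(\mu)\Bigr)s \;=:\; \tilde F(p)\cdot s.
\end{equation*}
Because $\tilde F$ is $T^n$-invariant, the weight-$\lambda$ subspace of smooth sections is preserved by multiplication by $\tilde F$, so iteration yields $\hat\varphi^k s_0 = \tilde F^k s_0$ and hence $s_t = e^{t\hat\varphi}s_0 = e^{t\tilde F}s_0$. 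The weak convergence claim thereby becomes the scalar assertion that the measure family $C_t e^{t\tilde F}\vol_M$ concentrates on $M^\lambda$ against $\vol^\lambda$ as $t\to\infty$.

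Step two identifies the concentration locus. Writing $\tilde F = F\circ\mu$ with $F(\nu)=\phi(\nu)-\sum_j(\nu_j-\lambda_j)\phi_j(\nu)$, I compute $\nabla F(\nu) = -\operatorname{Hess}\phi(\nu)(\nu-\lambda)$; strict convexity of $\phi$ forces $\nabla F = 0 \iff \nu=\lambda$, and $\operatorname{Hess}F(\lambda) = -\operatorname{Hess}\phi(\lambda)\prec 0$. Hence $\tilde F$ attains its maximum on $M$ exactly along $M^\lambda$, non-degenerately in the $n$ directions transverse to the level set — a Bott-type non-degenerate maximum parametrised by $\fot^*$.

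Step three is the asymptotic calculation. For $\tau\in\Gamma_c(M,L^{-1})$ set $f=\langle s_0,\tau\rangle$; in action--angle coordinates on a collar of $M^\lambda$ the Liouville form factors as $\vol_M = \kappa\,\vol^\nu\wedge d\nu$ for a universal combinatorial constant $\kappa$, so that
\begin{equation*}
\iota(C_t s_t)(\tau) \;=\; \frac{\int_{\fot^*}h(\nu)\,e^{tF(\nu)}\,d\nu}{\int_{\fot^*}V(\nu)\,e^{tF(\nu)}\,d\nu},\qquad h(\nu)=\!\!\int_{M^\nu}\!\!f\,\vol^\nu,\ V(\nu)=\!\!\int_{M^\nu}\!\!\vol^\nu.
\end{equation*}
Standard Laplace's method at $\nu=\lambda$ extracts common prefactors $(2\pi/t)^{n/2}\det(\operatorname{Hess}\phi(\lambda))^{-1/2}e^{tF(\lambda)}$ that cancel between numerator and denominator, leaving a finite limit depending only on $h$ and $V$ at $\lambda$. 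The main technical obstacle is then the precise normalisation match: naive cancellation yields $h(\lambda)/V(\lambda)$ while $\delta_{s_0}(\tau)=h(\lambda)$, so one must verify that the Duistermaat--Heckman density implicit in $C_t=\|e^{t\tilde F}\|_{L^1}^{-1}$ exactly absorbs $V(\lambda)$ against the chosen normalisation of $\vol^\lambda$ from Section~2.2 (which is the subtlest point because the combinatorial constants relating $\vol_M$, $\vol^\lambda$ and the push-forward measure on $\fot^*$ must be tracked carefully). Once this bookkeeping is settled, uniformity of the pointwise asymptotic over bounded families of test sections $\tau\in\Gamma_c(M,L^{-1})$ is standard (all estimates are governed by $\|f\|_{C^0}$ together with $C^2$-control of $F$ near $\lambda$), yielding the distributional convergence $\iota(C_t s_t)\to\delta_{s_0}$ in $\Gamma_c(M,L^{-1})'$.
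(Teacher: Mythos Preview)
Your proof attempt addresses the wrong statement. Theorem~\ref{thm3-2} asserts the existence, for each $t>0$, of a global complex structure $J_t$ obtained by applying the imaginary-time flow $e^{-itX_\varphi}$ to local $J_0$-holomorphic coordinates, together with a biholomorphism $\psi_t:(M,J_t)\to(M,J_0)$. There is nothing in the statement about quantum spaces, weight-$\lambda$ sections, the operator $\hat\varphi$, Laplace asymptotics, or distributional convergence. What you have written is a proof sketch for Theorem~\ref{thm9} (the convergence $\iota(C_t s_t)\to\delta_{s_0}$), not for Theorem~\ref{thm3-2}.

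Moreover, Theorem~\ref{thm3-2} is not proved in this paper at all: it is quoted from \cite[Theorem~1.2]{LW1} as background for the present work. A proof would have to argue (i) that the Lie series $e^{-itX_\varphi}w_j$ converges on each local holomorphic coordinate $w_j$ for all $t>0$ (this is where the $T^n$-symmetry and the special form $\varphi=\phi\circ\mu$ are used to get long-time existence, as opposed to the small-time analytic result of Mour\~ao--Nunes), (ii) that the resulting new coordinate charts are compatible on overlaps so as to define a global integrable complex structure $J_t$, and (iii) that the assignment $w_j\mapsto e^{-itX_\varphi}w_j$ patches to a global diffeomorphism $\psi_t$ which is a biholomorphism between $(M,J_t)$ and $(M,J_0)$. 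None of these ingredients appears in your argument.
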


\begin{remark}\label{rk2-0-11}
According to Theorem \ref{thm3-2}, we have for any holomorphic function $f$ with respect to $J_{0}$, $\psi_{t}^{*}f=e^{-itX_{\varphi}}f$. The construction of $\psi_{t}$ depends on the complex structure $J$. In general, $\psi_{t}$ will not be a symplectomorphism of $(M,\omega)$.
\end{remark}

\begin{theorem}\cite[Theorem 1.3]{LW1}
Under the assumption $(*)$, for any $t > 0$, $(M,\omega, J_{t})$ is a K\"ahler manifold. Moreover, the path of K\"ahler metrics $g_{t}=\omega(-,J_{t}-)$ is a geodesic path in the space of K\"ahler metrics of $M$. \end{theorem}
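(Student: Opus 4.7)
The proof splits into two parts: verifying the K\"ahler property of $(M,\omega,J_t)$, and identifying the path $g_t$ as a Mabuchi--Donaldson--Semmes geodesic.

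For the K\"ahler property, Theorem \ref{thm3-2} already gives that $J_t$ is a global integrable complex structure. To show $\omega$ is compatible with $J_t$, the local potential formula
\[
\rho_t = \rho_0 - 2t\varphi + 2t\beta(X_\varphi)
\]
from Theorem \ref{thm3-4-2} yields $i\partial_t\bar\partial_t \rho_t = \omega$, so $\omega$ is of type $(1,1)$ with respect to $J_t$. It remains to verify positivity of $g_t = \omega(-,J_t-)$. I would work first on the open dense subset $\check M$, choosing local $J_0$-holomorphic coordinates $(z^1,\ldots,z^m)$ adapted to the torus action (so that the first $n$ coordinates are conjugate to the moment map) and then passing to $J_t$-holomorphic coordinates $w^j := e^{-itX_\varphi}z^j$. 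A direct computation of $(\partial_{w^j}\partial_{\bar w^k}\rho_t)$ splits into a block governed by $\mathrm{Hess}(\phi)$ on the orbit directions and a block essentially equal to $(\partial_{z^j}\partial_{\bar z^k}\rho_0)$ on $\ker d\mu$. Strict convexity of $\phi$ together with positivity of the original K\"ahler metric yields positivity of $g_t$ on $\check M$, which then extends to all of $M$ by continuity using that $\rho_t$ defines a smooth $T^n$-invariant K\"ahler potential globally (modulo the usual pluriharmonic ambiguity).

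For the geodesic property, I would invoke the Donaldson--Semmes picture of the space of $\omega$-compatible complex structures, whose formal geodesics are realized precisely by imaginary-time Hamiltonian flows. Equivalently, the biholomorphism $\psi_t:(M,J_t)\to(M,J_0)$ lets us reparametrize the path as a family of K\"ahler potentials $f_t$ on $(M,J_0)$ via
\[
\omega + i\partial_0\bar\partial_0 f_t = (\psi_t^{-1})^{*}\omega,
\]
so the Mabuchi geodesic equation $\ddot f_t = |\partial_0\dot f_t|^2_{g_{f_t}}$ can be verified by extending the family to $\Phi(x,z) := f_{-\log|z|}(x)$ on $M\times\{1<|z|<e^T\}$ and checking that it satisfies the homogeneous complex Monge--Amp\`ere equation $(\pi^{*}\omega + i\partial\bar\partial\Phi)^{m+1} = 0$. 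Because $\varphi = \phi\circ\mu$ is $T^n$-invariant and $\phi$ is strictly convex, the convergence of the Lie series in Theorem \ref{thm3-2} gives a smooth HMA solution with strictly plurisubharmonic transverse behavior, so $f_t$ is a genuine smooth geodesic in Mabuchi's space.

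The hardest step will be the positivity of $g_t$ across the lower-dimensional stratum $M\setminus\check M$, where the stabilizer is nontrivial and the action-angle coordinate picture degenerates. I expect to handle this by combining Guillemin--Sternberg normal forms for Hamiltonian torus actions near fixed strata with a continuity/open-condition argument: positivity holds at $t=0$, $g_t$ depends real-analytically on $t$ through the flow $e^{-itX_\varphi}$, and the formula for $\rho_t$ together with strict convexity of $\phi$ prevents any eigenvalue of the Hermitian form from crossing zero for $t>0$.
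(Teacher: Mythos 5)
This statement is not proven in the present paper at all: it is quoted verbatim from \cite[Theorem 1.3]{LW1}, so there is no internal proof to compare your attempt against. Judged on its own terms, your outline has one structural problem that you need to resolve before anything else: circularity. You deduce that $\omega$ is of type $(1,1)$ with respect to $J_t$ from the potential formula of Theorem \ref{thm3-4-2}, but the proof of Theorem \ref{thm3-4-2} in this paper \emph{begins} by asserting that ``$\omega$ is a real $(1,1)$ form with respect to $J_t$'' (citing \cite[Theorem 3.20]{LW1}) and that the potentials $\rho_t$ exist; in other words, the existence of a $J_t$-K\"ahler potential for $\omega$ --- which is essentially the statement you are trying to prove --- is an input to, not an output of, that theorem. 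To make your first part non-circular you would have to rederive the potential formula directly from the Mour\~ao--Nunes result (Theorem \ref{thm2-0-11}) and then justify its validity for \emph{all} $t>0$, which is precisely the long-time convergence analysis of the Lie series carried out in \cite{LW1}; that is the genuinely hard content, and your sketch assumes it rather than supplies it.

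Two smaller points. First, once you know $\omega$ is $J_t$-invariant, the positivity of $g_t=\omega(-,J_t-)$ on all of $M$ (including the degenerate stratum $M\setminus\check M$) does not require Guillemin--Sternberg normal forms: $g_t$ is then a symmetric, everywhere nondegenerate bilinear form depending continuously on $(t,p)\in[0,\infty)\times M$ with $M$ compact and connected, and it is positive definite at $t=0$, so its eigenvalues can never cross zero; your ``open-condition'' instinct is right but can be made into a one-line argument, and the blockwise $\mathrm{Hess}(\phi)$ computation on $\check M$ is then unnecessary for positivity (though it is the natural way to see nondegeneracy if you do not already have the $(1,1)$ property). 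Second, your treatment of the geodesic claim via the Semmes--Donaldson correspondence and the homogeneous complex Monge--Amp\`ere equation on $M\times\{1<|z|<e^T\}$ is the standard and almost certainly the intended route, but as written it again defers the real work (smoothness and strict transverse plurisubharmonicity of the HMA solution for all $t$) to ``convergence of the Lie series in Theorem \ref{thm3-2}'', which is the same unproven core as in the first part.
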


\begin{theorem}\cite[Theorem 1.4]{LW1}
Under the assumption $(*)$,
let $\{\shP_{t}\}_{t\ge 0}$ be the one-parameter family of K\"ahler polarizations associated to the complex structures $J_{t}$ constructed in \cite[Theorem 1.2]{LW1}. Then, we have
$$\lim_{t\rightarrow \infty} (\shP_{t})_{p}= (\shP_{\mathrm{mix}})_{p}, \forall p \in M.$$
\end{theorem}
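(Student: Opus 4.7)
My plan is to establish the convergence $\lim_{t\to\infty}(\shP_t)_p = (\shP_{\mathrm{mix}})_p$ pointwise in the Grassmannian $\Gr(m, T_pM \otimes \CC)$, first on the open dense set $\check M$ of principal $T^n$-orbits and then extending to the non-generic stratum. Fix $p \in \check M$ with $\mu(p) = \lambda \in \fot^*_{\mathrm{reg}}$. On a $T^n$-invariant neighborhood $U$ of $p$ I would pick $J_0$-holomorphic coordinates of the form $(z^1, \ldots, z^{m-n}, w^1, \ldots, w^n)$, where the $z^k$ are local holomorphic coordinates on $M_\lambda$ lifted horizontally and the angular coordinates are written as $w^j = e^{h^j(\mu) + i\theta^j}$; here $\theta^j$ are angle coordinates for the $T^n$-action and $h^j(\mu)$ is uniquely determined (locally) by requiring $w^j$ to be $J_0$-holomorphic, using the Marsden--Weinstein local model.

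Using Theorem \ref{thm3-2}, local $J_t$-holomorphic coordinates are obtained by applying $e^{-itX_\varphi}$ to these. Since $\varphi = \phi \circ \mu$ is $T^n$-invariant, $X_\varphi = \sum_k \phi_k(\mu)\,\partial/\partial\theta_k$ with $\phi_k := \partial\phi/\partial\mu_k$; in particular $X_\varphi$ kills the $z^k$'s and satisfies $X_\varphi(e^{i\theta^j}) = i\phi_j(\mu)\,e^{i\theta^j}$. A direct Lie-series computation (using Definition \ref{def3-2-1} and Lemma \ref{lem3-1-1}) gives $e^{-itX_\varphi}(e^{i\theta^j}) = e^{t\phi_j(\mu)+i\theta^j}$, hence $w^j_t := e^{-itX_\varphi}(w^j) = e^{t\phi_j(\mu)} w^j$. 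Thus $\shP_t$ at $p$ is cut out by the $m$ linear forms $\{dz^k, dw^j_t\}$ on $T_pM \otimes \CC$.

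The main step is the passage to the limit. The first $m-n$ constraints $dz^k$ are independent of $t$ and annihilate the subspace containing $\partial/\partial\bar z^k$ and $\partial/\partial\theta_j$, which is already $(\shP_{\mathrm{mix}})_p$. For the remaining constraints, expand $dw^j_t = e^{t\phi_j(\mu)}\bigl(t w^j d\phi_j(\mu) + dw^j\bigr)$; after rescaling to $(tw^j e^{t\phi_j})^{-1}\,dw^j_t$, the dominant term as $t \to \infty$ is $d\phi_j = \sum_k(\partial\phi_j/\partial\mu_k)\,d\mu_k$. Because $\phi$ is strictly convex, the Hessian $(\partial\phi_j/\partial\mu_k)$ is invertible, so the limiting constraints $\{d\phi_j\}_{j=1}^n$ are equivalent to $\{d\mu_k\}_{k=1}^n$. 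Therefore every accumulation point of $\shP_t$ in the compact Grassmannian is contained in $\bigcap_k\ker dz^k \cap \bigcap_k \ker d\mu_k = (\shP_{\mathrm{mix}})_p$, and since both spaces have dimension $m$ they must coincide, which forces $\shP_t \to \shP_{\mathrm{mix}}$ at $p$.

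The main obstacle I foresee is the extension to points $p \in M \setminus \check M$ where the stabilizer is nontrivial and the Marsden--Weinstein product model above is not directly applicable. The cleanest approach is an equivariant slice theorem: decompose $T_pM$ isotypically under the stabilizer $H_p$, write down a local equivariant normal form respecting this splitting, and repeat the asymptotic analysis for each isotypic component (noting that the strict-convexity input still ensures nondegeneracy on every nontrivial weight). An alternative route is to use continuity of $\shP_t$ as a smooth subbundle of $TM \otimes \CC$ combined with the density of $\check M$, noting that $\shP_{\mathrm{mix}}$ is only a singular distribution at non-generic points but its value there is forced by the convergence already established along sequences of nearby generic points.
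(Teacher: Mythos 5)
This statement is quoted from \cite[Theorem 1.4]{LW1} and is not proved in the present paper, so there is no internal proof to compare against; I can only assess your argument on its merits and against the general strategy of \cite{LW1}, which likewise rests on computing the action of $e^{-itX_{\varphi}}$ on local holomorphic coordinates adapted to the (local) complexified torus action. On the open dense stratum $\check{M}$ your computation is essentially correct and captures the real mechanism: with coordinates $(z,w)$ in which $X_{\varphi}z^{k}=0$ and $X_{\varphi}w^{j}=i\phi_{j}(\mu)w^{j}$, one gets $w^{j}_{t}=e^{t\phi_{j}(\mu)}w^{j}$, the rescaled forms $(tw^{j}e^{t\phi_{j}})^{-1}dw^{j}_{t}$ converge to $d\phi_{j}$, and strict convexity of $\phi$ converts $\operatorname{span}\{d\phi_{j}\}$ into $\operatorname{span}\{d\mu_{k}\}$; the inclusion $(\shP_{\mathrm{mix}})_{p}\subseteq\bigcap_{k}\ker dz^{k}\cap\bigcap_{j}\ker d\mu_{j}$ together with the equality of dimensions (both $m$, using that $dz^{1},\dots,dz^{m-n},d\mu_{1},\dots,d\mu_{n}$ are independent at points of $\check{M}$) then forces convergence. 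Two points deserve more care even here: the existence of local $J_{0}$-holomorphic coordinates $w^{j}$ transforming with weight $e_{j}$ under $T^{n}$ should be justified (it follows from linearizing the local holomorphic $(\CC^{*})^{n}$-action on $\check{M}$), and your parenthetical identification of $(\shP_{\mathrm{mix}})_{p}$ with $\operatorname{span}\{\partial/\partial\bar z^{k},\partial/\partial\theta_{j}\}$ is not correct in general (since $\mu$ may depend on $z$, the factor $\shP_{J}\cap\shD_{\CC}$ is spanned by vectors $\partial/\partial\bar z^{k}+\sum_{j}c_{kj}\partial/\partial\bar w^{j}$), though your final dimension-count argument does not actually need that identification.

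The genuine gap is the non-generic stratum $M\setminus\check{M}$, where the theorem still asserts pointwise convergence. Your second proposed route --- deducing the value of the limit at a non-generic $p$ from convergence at nearby generic points --- cannot work: pointwise limits at distinct points are logically independent, and $\shP_{\mathrm{mix}}$ is genuinely discontinuous across the strata (e.g.\ at a fixed point $\shI_{\CC}=0$ and $\shP_{\mathrm{mix}}=\shP_{J}$, which is not the limit of the values of $\shP_{\mathrm{mix}}$ along nearby principal orbits). Density of $\check{M}$ gives you nothing at $p\notin\check{M}$. Your first route (equivariant slice theorem and isotypic decomposition under the stabilizer $H_{p}$) is the right idea, but as written it is only a declaration of intent: at such points $X_{\varphi}$ degenerates (it vanishes at fixed points while its linearization does not), the coordinates $w^{j}$ acquire zeros, and the rescaling by $(tw^{j}e^{t\phi_{j}})^{-1}$ breaks down, so the asymptotic analysis must be redone with a different normalization in each isotypic component. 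Until that analysis is carried out, the proof establishes the convergence only on $\check{M}$, not on all of $M$ as the theorem claims.
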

In this paper, we will show that $\psi_{t}: (M,J_{t}) \rightarrow (M,J_{0})$ built in Theorem \ref{thm3-2} can be lifted to the line bundle $L$ by the imaginary-time flow $e^{-it\tilde{X}_{\varphi}}$ (refer to \cite{Kos}). We now explore the process of assigning a vector field $\tilde{X}_{\varphi}$ on $L$ to the smooth function $\varphi$. Let $L^{\times}$ be the complement of zero section in $L$. Then $\pi: L^{\times} \rightarrow M$ is a principal $\CC^{\times}$ bundle over $M$.  Note that there is a natural $T^{1}$-action on $L^{\times}$, and the corresponding fundamental vector field is denoted by $\xi^{\#}$. 
 Then $\eta(\varphi)=\tilde{\varphi}\xi^{\#}$ is a vertical vector field on $L^{\times}$ with $\tilde{\varphi}=\pi^{*}\varphi$, which can be naturally extended to $L$. More precisely $\eta(\varphi)$ is defined by:
\begin{equation}\label{eq2-0-33} (\eta(\varphi) \psi)(q)=\frac{d}{dt}\psi( e^{- i t \tilde{\varphi}(q)}q)|_{t=0},
\end{equation}
for any $\psi \in C^{\infty}(L^{\times}), q \in L^{\times}$.
The vector field $\tilde{X}_{\varphi}$ on $L$ is defined by:
 \begin{equation}
 \tilde{X}_{\varphi}= X_{\varphi}^{H}+ \eta(\varphi),
 \end{equation}
where $X_{\varphi}^{H}$ is the horizontal lifting of $X_{\varphi}$ with respect to the connection $\nabla$ on $L^{\times}$.

\subsection{$E(L)$-isomorphism between   $\tilde{S}_{m}$ and $S_{m}$}
Let $L_{i}, i=1,2$, be line bundles over, respectively, manifolds $X_{i},i=1,2$. A map of line bundles is a smooth map $\tau: L_{1} \rightarrow L_{2}$ such that there exists a smooth map $\check{\tau}:X_{1} \rightarrow X_{2}$ satisfying
\begin{enumerate}
\item the following diagram commutes
$$ 
\begin{tikzcd}
L_{1} \arrow{r}{\tau} \arrow{d}{\pi_{1}}&L_{2} \arrow{d}{\pi_{2}}\\
X_{1} \arrow{r}{\check{\tau}}& X_{2}
\end{tikzcd}
$$
\item for any $p\in X_{1}$, the map $\tau:(L_{1})_{p}\rightarrow (L_{2})_{\check{\tau}p}$ is a linear isomorphism.
\end{enumerate}
Let $E(L)$ be a group consisting of bundle maps of $L$ that commute with the $\CC^{\times}$-action.
Now we recall that properties of the space  $S_{m}$ of all measurable sections of $L$. The space $S_{m}$ is a module for the group $E(L)$ and the module structure is given by:
$$ (\tau \cdot s)(p)= \tau(s(\tau^{-1} \cdot p)),$$
for any $\tau \in E(L), s \in S_{m}, p \in M$,
where for convenience we write $\tau \cdot p$ for $\check{\tau} \cdot p$.
This action may be reduced to an action on ordinary functions as follows: let $\tilde{S}_{m}$ be the space of all complex-valued function $u$ on $L^{\times}$ such that 
$$c \cdot u=cu,$$
for all $c \in \CC^{\times}$. That is all $u$ such that for any $q\in L^{\times}$, $u(c^{-1}q)=cu(q)$. Since $E(L)$ commutes with the action of $\CC^{\times}$, it is clear that $\tilde{S}_{m}$ is stable under the action of $E(L)$.

\begin{proposition}\cite[Proposition 3.4.1]{Kos}
For any $s \in S_{m}$, let $\tilde{s}$ be the function on $L^{\times}$ defined by 
$$\tilde{s}(q)=\frac{s(\pi (q))}{q}.$$
Then $\tilde{s} \in \tilde{S}_{m}$ and the map $S_{m} \rightarrow \tilde{S}_{m}$
defined by $s \mapsto \tilde{s}$ is an $E(L)$-isomorphism.  
\end{proposition}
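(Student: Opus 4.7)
The plan is to verify three assertions in sequence: (i) $\tilde{s}$ satisfies the equivariance defining $\tilde{S}_{m}$, (ii) the assignment $s \mapsto \tilde{s}$ is a linear bijection $S_{m} \to \tilde{S}_{m}$, and (iii) this bijection intertwines the two prescribed $E(L)$-actions. All three rest on one observation: since $s(\pi(q))$ and $q$ both live in the one-dimensional complex fiber $L_{\pi(q)}$, the quotient $s(\pi(q))/q$ is well-defined as the unique scalar $\alpha \in \CC$ with $s(\pi(q)) = \alpha\, q$, and it depends inverse-linearly on its denominator within the fiber.

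For (i), because $\pi(c^{-1}q) = \pi(q)$, the numerator is unchanged by $q \mapsto c^{-1}q$; writing $s(\pi(q)) = \alpha\, q = (\alpha c)\cdot(c^{-1}q)$ gives $\tilde{s}(c^{-1}q) = \alpha c = c\,\tilde{s}(q)$, which is precisely the defining property of $\tilde{S}_{m}$, and measurability descends from $s$ via any local trivialization of $L$. For (ii), linearity is automatic; injectivity is immediate because $\tilde{s}\equiv 0$ forces $s(\pi(q))=0$ for every $q \in L^{\times}$ and hence $s\equiv 0$ on $M$; for surjectivity I define an inverse by $s(p) := u(q)\, q$ for any choice of $q \in \pi^{-1}(p)$, and choice-independence follows from the equivariance of $u$: if $q' = c^{-1}q$ then $u(q')\,q' = (c u(q))\cdot(c^{-1}q) = u(q)\,q$. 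A one-line check then recovers $\widetilde{s} = u$.

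For (iii), I take the action on $S_{m}$ as $(\tau\cdot s)(p) = \tau(s(\check{\tau}^{-1}p))$ and the natural action on functions over $L^{\times}$ as $(\tau\cdot u)(q) = u(\tau^{-1}q)$, which preserves $\tilde{S}_{m}$ since $\tau$ commutes with $\CC^{\times}$. Using $\pi\circ\tau = \check{\tau}\circ\pi$, I rewrite $(\widetilde{\tau\cdot s})(q) = \tau(s(\pi(\tau^{-1}q)))/q$ and $(\tau\cdot\tilde{s})(q) = s(\pi(\tau^{-1}q))/\tau^{-1}q$. Writing $s(\pi(\tau^{-1}q)) = \alpha\,\tau^{-1}q$ and applying $\tau$ fiberwise (so $\tau(\alpha\,\tau^{-1}q) = \alpha\, q$) shows both expressions equal $\alpha$. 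The only spot requiring any care is the choice-independence of the inverse in (ii), and this is really the single point at which the $\tilde{S}_{m}$-equivariance is genuinely used; everything else is direct fiberwise bookkeeping.
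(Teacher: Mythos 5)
Your proof is correct: the fiberwise identification $s(\pi(q))=\alpha q \Rightarrow \tilde{s}(q)=\alpha$, the explicit inverse $s(p)=u(q)\,q$ with its choice-independence check, and the equivariance computation via $\pi\circ\tau=\check{\tau}\circ\pi$ are all sound and complete. Note that the paper itself gives no proof of this statement --- it is quoted directly from Kostant --- so there is nothing to compare against; your argument is the standard one and is exactly what a proof of this proposition should look like.
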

We will use the following result by Kostant to prove the existence of the lifting of $\psi_{t}$.
\begin{proposition}\cite[3.4.2]{Kos}\label{lem3-0-12}
For any $s\in \Gamma(X, L)$, one has 
 $$-i\tilde{X}_{\varphi} \tilde{s}=\widetilde{t},$$ where $t \in \Gamma(M,L)$ is given by 
$t=-i(\nabla_{X_{\varphi}} +i \varphi)s.$
\end{proposition}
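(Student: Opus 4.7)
The plan is to decompose $\tilde{X}_{\varphi} = X_{\varphi}^{H} + \eta(\varphi)$ and compute the action of each summand on $\tilde{s}$ separately, then recombine to recognize the right-hand side. Everything reduces to two clean pieces: the horizontal part encodes the covariant derivative, and the vertical part picks up the multiplication by $i\varphi$ through the $\CC^{\times}$-equivariance of $\tilde{s}$.

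First I would handle the horizontal term. The defining property of the connection $\nabla$ on $L$, transported to the principal $\CC^{\times}$-bundle $\pi:L^{\times}\to M$, states that for any $X\in\Gamma(TM)$ and any $s\in\Gamma(M,L)$ one has the identity
\begin{equation*}
X^{H}\tilde{s} \;=\; \widetilde{\nabla_{X}s}
\end{equation*}
on $L^{\times}$. This is precisely the standard characterization of the horizontal lift in terms of covariant differentiation of equivariant functions on the frame bundle; applied to $X=X_{\varphi}$ this gives $X_{\varphi}^{H}\tilde{s}=\widetilde{\nabla_{X_{\varphi}}s}$.

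Next I would handle the vertical term $\eta(\varphi)\tilde{s}$. By the defining formula~(\ref{eq2-0-33}),
\begin{equation*}
(\eta(\varphi)\tilde{s})(q) \;=\; \frac{d}{dt}\Bigl.\tilde{s}\bigl(e^{-it\tilde{\varphi}(q)}q\bigr)\Bigr|_{t=0}.
\end{equation*}
Using the equivariance condition $\tilde{s}(c^{-1}q)=c\tilde{s}(q)$ with $c=e^{it\tilde{\varphi}(q)}$, the inner expression becomes $e^{it\tilde{\varphi}(q)}\tilde{s}(q)$, and differentiating at $t=0$ yields $i\tilde{\varphi}(q)\tilde{s}(q)$. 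Because the $E(L)$-isomorphism $s\mapsto\tilde{s}$ is a module map over pullback functions, $\tilde{\varphi}\tilde{s}=\widetilde{\varphi s}$, so $\eta(\varphi)\tilde{s}=i\widetilde{\varphi s}$.

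Combining the two computations gives
\begin{equation*}
\tilde{X}_{\varphi}\tilde{s} \;=\; \widetilde{\nabla_{X_{\varphi}}s} + i\widetilde{\varphi s} \;=\; \widetilde{(\nabla_{X_{\varphi}}+i\varphi)s},
\end{equation*}
and multiplying by $-i$ yields $-i\tilde{X}_{\varphi}\tilde{s}=\widetilde{-i(\nabla_{X_{\varphi}}+i\varphi)s}=\tilde{t}$, as required. The only nontrivial step is the identification $X^{H}\tilde{s}=\widetilde{\nabla_{X}s}$; this is the main obstacle in the sense that it is the only nonformal input, but it is a standard fact about connections on Hermitian line bundles viewed as principal $\CC^{\times}$-bundles, so I would simply cite it (or verify it in a local trivialization where $\nabla=d+\alpha$, $\tilde{s}(p,z)=z^{-1}s(p)$, and the horizontal lift of $X$ is $X-\alpha(X)\xi^{\#}$, giving the identity by a one-line computation).
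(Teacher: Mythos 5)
Your proof is correct: the paper itself gives no argument for this proposition (it is quoted directly from Kostant \cite[3.4.2]{Kos}), and your decomposition into the horizontal part $X_{\varphi}^{H}\tilde{s}=\widetilde{\nabla_{X_{\varphi}}s}$ and the vertical part $\eta(\varphi)\tilde{s}=i\widetilde{\varphi s}$ (via the $\CC^{\times}$-equivariance $\tilde{s}(c^{-1}q)=c\tilde{s}(q)$) is exactly the standard computation underlying that citation. The one nonformal input you flag, $X^{H}\tilde{s}=\widetilde{\nabla_{X}s}$, is indeed the correct characterization of the connection on the associated principal $\CC^{\times}$-bundle and checks out in the local trivialization $\nabla\sigma=-i\beta\sigma$, $\tilde{s}=f/z$.
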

We denote the moment polytope by $P$, i.e $P=\mu(M)$.
Finally, we review the work (\cite[Lemma 3.7]{BFMN}) of Baier, Florentino, Mour\~{a}o and Nunes, which will be used in the proof of convergence of quantum space. 

\begin{lemma}\cite[Lemma 3.7]{BFMN}
Let $\phi: \fot^{*} \rightarrow \RR$ be a strictly convex function on $\fot^{*}$.
For any $\lambda \in \fot^{*} \cap P$, the function
\begin{equation}
f_{\lambda}: P \rightarrow \RR, x\mapsto f_{\lambda}(x):= ^t(x-\lambda)\frac{\partial\phi}{\partial x}-\phi(x)
\end{equation}
has a unique minimum at $x=\lambda$ and 
\begin{equation}
\lim_{s\rightarrow \infty} \frac{e^{-sf_{\lambda}}}{\|e^{-sf_{\lambda}}\|_{1}} \rightarrow \delta(x-\lambda),
\end{equation}
in the sense of distributions.
\end{lemma}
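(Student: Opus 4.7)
The plan is to split the statement into two parts: the critical point analysis showing $\lambda$ is the unique minimum on $P$, and a Laplace-type concentration argument for the distributional limit. For the first part, I compute the gradient directly: writing $f_{\lambda}(x) = \sum_i (x_i - \lambda_i) \frac{\partial \phi}{\partial x_i}(x) - \phi(x)$, differentiating in $x_j$ cancels the $\partial \phi / \partial x_j$ terms and leaves $\frac{\partial f_{\lambda}}{\partial x_j}(x) = \sum_i (x_i - \lambda_i) \frac{\partial^2 \phi}{\partial x_i \partial x_j}(x)$, i.e.\ $\nabla f_{\lambda}(x) = \mathrm{Hess}(\phi)(x)\,(x - \lambda)$. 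Strict convexity of $\phi$ makes $\mathrm{Hess}(\phi)$ positive definite, so the unique critical point is $x = \lambda$. To promote this to a strict global minimum on the compact convex set $P$, I invoke the strict subgradient inequality for $\phi$: $\phi(\lambda) > \phi(x) + \nabla \phi(x) \cdot (\lambda - x)$ for all $x \neq \lambda$, which rearranges to $f_{\lambda}(x) > -\phi(\lambda) = f_{\lambda}(\lambda)$.

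For the distributional convergence, given a test function $\psi \in C^{\infty}(P)$, I would show
\[
\frac{\int_P \psi(x)\, e^{-s(f_{\lambda}(x) - f_{\lambda}(\lambda))}\, dx}{\int_P e^{-s(f_{\lambda}(x) - f_{\lambda}(\lambda))}\, dx} \longrightarrow \psi(\lambda)
\]
as $s \to \infty$. Fix $\epsilon > 0$ and a neighborhood $U$ of $\lambda$ on which $|\psi - \psi(\lambda)| < \epsilon$. By compactness of $P \setminus U$ and the strict minimum property, there exists $\eta > 0$ with $f_{\lambda} - f_{\lambda}(\lambda) \ge \eta$ off $U$, so the tail contribution is bounded by $\|\psi\|_{\infty}\, \vol(P)\, e^{-s\eta}$. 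The denominator is bounded below by a Gaussian estimate: Taylor expansion with $\mathrm{Hess}(f_{\lambda})(\lambda) = \mathrm{Hess}(\phi)(\lambda) > 0$ yields $f_{\lambda}(x) - f_{\lambda}(\lambda) \le C\,|x - \lambda|^2$ in a small ball, hence $\int_P e^{-s(f_{\lambda} - f_{\lambda}(\lambda))}\, dx \ge c\, s^{-n/2}$. The tail ratio thus tends to $0$, while the contribution from $U$ approximates $\psi(\lambda)$ to within $\epsilon$, completing the weak convergence.

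The main obstacle I anticipate is making the Gaussian lower bound rigorous when $\lambda$ lies on the boundary of $P$, since a full ball around $\lambda$ may not be contained in $P$ and the effective Gaussian integral is then taken over a cone. For the applications in this paper, $\lambda \in \fot^*_{\ZZ,\reg}$ lies in the interior of $P$ (regular values of the moment map on a compact Hamiltonian manifold form an open set in the interior of the moment polytope), so the full Gaussian bound applies. For the general statement with $\lambda \in \fot^* \cap P$ possibly on the boundary, one localizes to $P \cap B_{\epsilon}(\lambda)$, which still has positive $n$-dimensional Lebesgue measure; a change of variable $y = \sqrt{s}(x - \lambda)$ gives a polynomial-in-$s^{-1}$ lower bound whose specific power is weaker but still beats the exponential decay $e^{-s\eta}$ of the tail, preserving the argument.
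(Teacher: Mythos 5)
The paper does not prove this lemma; it is quoted verbatim from the cited reference [BFMN, Lemma 3.7], so there is no internal proof to compare against. Your argument is correct and is essentially the standard one: the gradient computation $\nabla f_{\lambda}(x)=\mathrm{Hess}(\phi)(x)(x-\lambda)$, the identification of $f_{\lambda}(x)-f_{\lambda}(\lambda)$ with the strict-convexity gap $\phi(\lambda)-\phi(x)-\nabla\phi(x)\cdot(\lambda-x)>0$ for $x\neq\lambda$, and the Laplace-method comparison of the exponentially small tail against the $s^{-n/2}$ lower bound for the normalizing integral are all sound, and your remark about boundary points of $P$ is a genuine (and correctly resolved) subtlety --- localizing to the cone $P\cap B_{\epsilon}(\lambda)$ still yields an $s^{-n/2}$ lower bound with a smaller constant, and in this paper's applications $\lambda\in\fot^{*}_{\ZZ,\mathrm{reg}}$ lies in the interior anyway.
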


 \section{Main results}
 Through this paper, we assume $(*)$. We observe that $M$ can be covered by open sets that are invariant under $T^{n}$, which we refer to as $T^{n}$-invariant open sets. Within each of these open sets, these exists a $T^{n}$-invariant constant norm section of $L$ denoted as $\sigma$ and a $T^{n}$-invariant potential $\rho_{0}$ with respect to complex structure $J_{0}=J$. 
 When contemplating a $T^{n}$-invariant open set denoted as $U$ possessing the following properties, we encompass within our consideration not only $U$ itself but also the accompanying entities of $\sigma$, $\rho_{0}$, and $\beta$. Here, $\beta$ represents a one-form that is invariant under $T^{n}$, and it is defined by the equation $\nabla \sigma = -i \beta \sigma$.

\subsection{$T^{n}$-invariant K\"ahler potentials}
 In \cite{LW1}, we have shown that there exists a one-parameter family of complex structures $J_{t}$ on $(M, \omega)$ constructed by imaginary-time flow $e^{-itX_{\varphi}}$, where $X_{\varphi}$ is the Hamiltonian vector field associated to smooth function $\varphi= \phi \circ \mu \in C^{\infty}(M)$.

In this subsection, we will give the explicit formula for the $T^{n}$-invariant K\"ahler potentials $\rho_{t}$
for $\omega$ associated to complex structures $J_{t}$.

\begin{lemma}\label{lem4-2-2}
Let $\beta$ be a $T^{n}$-invariant real potential one-form on the $T^{n}$-invariant open subset $U \subset M$.
Then we have: $\beta(X_{\varphi})$ is $T^{n}$-invariant and 
\begin{equation}
X_{\varphi}(\beta(X_{\varphi}))=0.
\end{equation}
\end{lemma}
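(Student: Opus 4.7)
The plan is to expand $X_{\varphi}$ as a linear combination of the fundamental vector fields of the torus action with $T^{n}$-invariant coefficients, and then deduce both assertions purely formally.

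First I would record the identity
\[
X_{\varphi} \;=\; \sum_{j} \Bigl(\frac{\partial \phi}{\partial \mu_{j}}\circ \mu\Bigr)\, e_{j}^{\#},
\]
where $\{e_{j}\}$ is a basis of $\fot$ and $e_{j}^{\#}$ the associated fundamental vector field. This follows from the chain rule $d\varphi = \sum_{j} \frac{\partial \phi}{\partial \mu_{j}}\, d\mu_{j}$, the moment map relation $\iota_{e_{j}^{\#}}\omega = -d\mu^{e_{j}} = -d\mu_{j}$ (so that $X_{\mu_{j}} = e_{j}^{\#}$), and the nondegeneracy of $\omega$.

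Next I would observe that each coefficient $\frac{\partial \phi}{\partial \mu_{j}}\circ \mu$ is $T^{n}$-invariant, being the pullback via the $T^{n}$-equivariant moment map $\mu$ of a function on $\fot^{*}$ (on which $T^{n}$ acts trivially). Moreover, each $e_{j}^{\#}$ is $T^{n}$-invariant since $T^{n}$ is abelian. Consequently $X_{\varphi}$ is a $T^{n}$-invariant vector field on $U$. Combined with the hypothesis that $\beta$ is a $T^{n}$-invariant one-form, the function $\beta(X_{\varphi})$ is $T^{n}$-invariant, which is the first claim.

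For the second claim, I would use the standard fact that a smooth function $f$ on $M$ is $T^{n}$-invariant if and only if $e_{j}^{\#}(f)=0$ for every $j$. Applying this to $f = \beta(X_{\varphi})$ and using the expansion of $X_{\varphi}$ above,
\[
X_{\varphi}\bigl(\beta(X_{\varphi})\bigr) \;=\; \sum_{j} \Bigl(\frac{\partial \phi}{\partial \mu_{j}}\circ \mu\Bigr)\, e_{j}^{\#}\bigl(\beta(X_{\varphi})\bigr) \;=\; 0.
\]
There is essentially no obstacle here: once the decomposition of $X_{\varphi}$ into fundamental vector fields with invariant coefficients is in hand, both statements reduce to the elementary observation that $T^{n}$-invariant functions are killed by fundamental vector fields of the torus action.
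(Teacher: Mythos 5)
Your proof is correct and follows essentially the same route as the paper: both write $X_{\varphi}=\sum_{j}\frac{\partial\phi}{\partial\mu_{j}}\,\xi_{j}^{\#}$ with $T^{n}$-invariant coefficients, conclude that $\beta(X_{\varphi})$ is $T^{n}$-invariant, and then kill it with each fundamental vector field. The only difference is that you spell out the derivation of the decomposition (chain rule plus the moment map relation), which the paper takes for granted.
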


\begin{proof}
By considering a chosen basis ${\xi_{1}, \cdots, \xi_{n}}$ of the Lie algebra $\fot$, we can associate the fundamental vector field $\xi_{j}^{\#}$ with each $\xi_{j}$.
The respective moment map, denoted as $\mu_{j}$, is defined as $d\mu_{j}=\omega(-, \xi_{j}^{\#})$.
Since $T^{n}$ is an Abelian group, the vector field $\xi_{j}^{\#}$ is invariant under the action of $T^{n}$, and $\mu_{j}$ is a $T^{n}$-invariant function. Consequently, $\frac{\partial \phi}{\partial \mu_{j}}$ is also a $T^{n}$-invariant function. Therefore, $X_{\varphi}=\sum_{j} \frac{\partial \phi}{\partial \mu_{j}} \xi^{\#}_{j}$ becomes a $T^{n}$-invariant vector field. Since both $\beta$ and $X_{\varphi}$ are $T^{n}$-invariant, $\beta(X_{\varphi})$ is a $T^{n}$-invariant function. This leads to the condition:

\begin{equation}
\xi_{j}^{\#}(\beta(X_{\varphi}))=0, \forall 1 \le j \le n.
\end{equation}
As a result, we obtain $X_{\varphi}(\beta(X_{\varphi}))=0.$
\end{proof}

Let $J_{t}$ be the complex structures determined by the imaginary time flow $e^{-itX_{\varphi}}$ (see Theorem \ref{thm3-2}).
To begin, we provide the formula relating the Kähler potential $\rho_{t}$ of $\omega$ with respect to $J_{t}$ and the Kähler potential $\rho_{0}$ of $\omega$ with respect to $J_{0}$.

\begin{theorem}\label{thm3-4-2}
Under the assumption $(*)$, let $\rho_{0} $ be a local $T^{n}$-invariant K\"ahler potential for $\omega$ with respect to $J_{0}$. Then, for any $t >0$, a local $T^{n}$-invariant K\"ahler potential $\rho_{t}$ for $\omega$ with respect to $J_{t}$ is given by 
$$\rho_{t} =\rho_{0} -2t \varphi +2t\beta(X_{\varphi}),$$ 
where $\beta$ is the real local potential for $\omega$ defined by $\beta=\mathrm{Re}(i \bar{\partial}_{0} \rho_{0}) = d_{0}^{c}\rho_{0}$ (i.e. $\beta^{0,1}=\frac{i}{2} \bar{\partial}_{0} \rho_{0}$), with $d^{c}_{0}=i(\partial_{0}-\bar{\partial}_{0})$ and $\bar{\partial}_{0}$ being the $\bar{\partial}$-operator with respect to the complex structure $J_{0}$.
\end{theorem}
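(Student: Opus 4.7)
The plan is to apply the Mourão--Nunes formula (Theorem~\ref{thm2-0-11}) with $h = \varphi$ and simplify each of the three constituent expressions $e^{-itX_\varphi}\rho_0$, $-it\varphi$, and the analytic continuation $\alpha_{-it}$ of $\alpha_t = \int_0^t e^{t'X_\varphi}(\beta(X_\varphi))\,dt'$ by systematically exploiting the $T^n$-invariance of all the data in hand.

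The first key reduction I would carry out is to show that $e^{-itX_\varphi}\rho_0 = \rho_0$. Fixing a basis $\{\xi_j\}$ of $\fot$ with associated fundamental vector fields $\xi_j^\#$ and moment map components $\mu_j$, abelian-ness of $T^n$ makes $\mu_j$ and hence $\frac{\partial\phi}{\partial\mu_j}$ into $T^n$-invariant functions, so $X_\varphi = \sum_j \frac{\partial\phi}{\partial\mu_j}\,\xi_j^\#$ is a $T^n$-invariant vector field. Since $\rho_0$ is assumed $T^n$-invariant one has $\xi_j^\#(\rho_0) = 0$ for every $j$, hence $X_\varphi(\rho_0) = 0$, and by iterating $X_\varphi^k(\rho_0) = 0$ for all $k \geq 1$. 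Invoking Lemma~\ref{lem3-1-1} to legitimize the termwise evaluation, the Lie series defining $e^{-itX_\varphi}\rho_0$ collapses to its zeroth term $\rho_0$.

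The second step handles $\alpha_{-it}$. By Lemma~\ref{lem4-2-2} we have $X_\varphi(\beta(X_\varphi)) = 0$, so $e^{t'X_\varphi}(\beta(X_\varphi)) = \beta(X_\varphi)$ for every $t'$, whence $\alpha_t = t\,\beta(X_\varphi)$ is a polynomial in $t$ and its analytic continuation is simply $\alpha_{-it} = -it\,\beta(X_\varphi)$. Plugging these computations into the Mourão--Nunes formula and using that $\rho_0$, $\varphi$, and $\beta(X_\varphi)$ are all real valued,
$$\rho_t = 2\,\mathrm{Im}\!\left(\tfrac{i}{2}\rho_0 - it\varphi + it\beta(X_\varphi)\right) = \rho_0 - 2t\varphi + 2t\beta(X_\varphi),$$
as claimed. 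The $T^n$-invariance of $\rho_t$ is automatic since each summand is $T^n$-invariant (the last by Lemma~\ref{lem4-2-2} itself).

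I expect no serious obstacle here: the content of the theorem is essentially the collapse of two Lie series, both of which reduce to the invariance observations above. The one mild care point is that Theorem~\ref{thm2-0-11} as stated furnishes the formula only for sufficiently small $t$, so to obtain the claim for all $t > 0$ I would invoke the long-time existence of the imaginary-time flow established in Theorem~\ref{thm3-2} together with real-analyticity of both sides of the identity in $t$, extending the local equality to a global one.
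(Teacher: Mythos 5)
Your proposal is correct and follows essentially the same route as the paper's proof: both apply the Mour\~ao--Nunes formula with $h=\varphi$, reduce $e^{-itX_{\varphi}}\rho_{0}$ to $\rho_{0}$ and $\alpha_{-it}$ to $-it\beta(X_{\varphi})$ via the $T^{n}$-invariance statements of Lemma \ref{lem4-2-2}, and then justify the extension to all $t>0$ by the long-time existence of $J_{t}$. The only cosmetic difference is that the paper cites \cite{BG} for the existence of the invariant potentials $\rho_{t}$ at large $t$ rather than arguing by real-analyticity in $t$.
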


\begin{proof}$\omega$ is a real (1,1) form with respect to complex structures $J_{t}$ by \cite[Theorem 3.20]{LW1}, for all $t\ge 0$.
It is enough to show: $\rho_{t}$ is a $T^{n}$-invariant function and $\beta=\mathrm{Re}(i \bar{\partial}_{t} \rho_{t})$ with $\bar{\partial}_{t}$ being the $\bar{\partial}$-operator with respect to the complex structure $J_{t}$. $T^{n}$-invariant follows from Lemma \ref{lem4-2-2}
We show $\beta=\mathrm{Re}(i \bar{\partial}_{t} \rho_{t})$ follows from \cite[Theorem 4.1]{MN}. 
Although \cite[Theorem 4.1]{MN} provide a formula given under analytic setting for small time t, the two conditions are to ensure the existence of $J_{t}$. In our setting, $J_{t}$'s exist for large $t$. This implies $\rho_{t}$'s exist(refer to \cite{BG}). By \cite[Theorem 4.1]{MN}, in order to show $\beta=\mathrm{Re}(i \bar{\partial}_{t} \rho_{t})$, it's enough to show
$$2\Im (\frac{i}{2}e^{-it X_{\varphi}}\rho_{0}-it \varphi -\alpha_{-it}) =\rho_{0} -2t \varphi +2t\beta(X_{\varphi}),$$
where $\alpha_{-it}$ is defined by $\alpha_{-it}=-i\int_{0}^{t} e^{t'X_{\varphi}(\beta(X_{\varphi}))}dt'$.
By Lemma \ref{lem4-2-2}, $X_{\varphi}(\beta(X_{\varphi}))=0$. This implies
$e^{t'X_{\varphi}}(\beta(X_{\varphi}))=\beta(X_{\varphi})$.
It follows that 
\begin{equation}\label{eq3-0-3}
\alpha_{-it}=-it\beta(X_{\varphi}).\end{equation}
As $\rho_{0}$ is $T^{n}$-invariant, $X_{\varphi} \rho_{0}=0$. It turns out that $e^{-itX_{\varphi}}\rho_{0}=\rho_{0}$.
Combine with (\ref{eq3-0-3}), we have
$$2\Im (\frac{i}{2}e^{-it X_{\varphi}}\rho_{0}-it \varphi -\alpha_{-it}) =2\Im (\frac{i}{2}\rho_{0}-it \varphi +it\beta(X_{\varphi}))=\rho_{0} -2t \varphi +2t\beta(X_{\varphi}).$$
\end{proof}

 \subsection{Quantum spaces associated to K\"ahler polarizations $\shP_{t}$}

Let $\shH_{t}$ be the quantum space associated to complex structure $J_{t}$, for $t\ge 0$.
Let $\hat{\varphi}$ be the pre-quantum operator associated to $\varphi$ (see equation \ref{eq2-0-9}).
In this subsection, we first show that
$e^{t\hat{\varphi}}$ can be applied to $\bar{\partial}_{L,0}$-holomorphic sections of $L$ and the operator
$$e^{t\hat{\varphi}}: \shH_{0} \rightarrow \shH_{t}$$ is a $T^{n}$-equivariant isomorphism (see Theorem \ref{thm7}).
Then, we show that the diffeomorphism $\psi_{t}$, given by imaginary-time flow $e^{-itX_{\varphi}}$, can be lifted to map of line bundles $\tilde{\psi}_{t}:L \rightarrow L$ such that the following diagram
 $$
\begin{tikzcd}
(L, \bar{\partial}_{L,t}) \arrow{r}{\tilde{\psi}_{t}} \arrow{d}&(L, \bar{\partial}_{L,0})\arrow{d}\\
(M, J_{t}) \arrow{r}{\psi_{t}}& (M,J_{0})
\end{tikzcd}
$$
 commutes, (see Theorem \ref{thm8})
and $\tilde{\psi}_{t}$ is given by applying $e^{-it\tilde{X}_{\varphi}}$ to local $\bar{\partial}_{L,0}$-holomorphic coordinates on $L$. Moreover, for any section $s_{0} \in \shH_{0}$, under the line bundle isomorphism $\phi_{t}^{*}(L,\bar{\partial}_{L,0}) \cong (L,\bar{\partial}_{L,t}) $ determined by $\tilde{\psi}_{t}$, we have
 \begin{equation}
 \Gamma(M, \psi_{t}^{*}L)\ni
\psi_{t}^{*}s_{0}=e^{t\hat{\varphi}}s_{0} 
  \in \Gamma(M,L).
 \end{equation}

\subsubsection{Quantum operator associated to $\varphi$}
  Let $\phi: \fot^{*} \rightarrow \RR$ be a strictly convex function and let $\varphi$ be a smooth function on $M$ defined by $\varphi=\phi \circ \mu$.
  We study properties of the quantum operator $\hat{\varphi}$ associated to $\varphi$ in this subsection.
  Recall that $\hat{\varphi}$ is defined by
\begin{equation}\label{eq2-0-9}
\hat{\varphi}: \Gamma(M,L) \rightarrow \Gamma(M,L),~~ s\mapsto \hat{\varphi} s:= -i\nabla_{X_{\varphi}}s +\varphi s,
\end{equation}
where $X_{\varphi}$ be the Hamiltonian vector field associated to $\varphi$.

Note that for any $s \in \Gamma(M,L)$ and any $f \in C^{\infty}(M)$,
\begin{equation}
\hat{\varphi}(fs)= -i(X_{\varphi}f) s +f(\hat{\varphi} s).
\end{equation}

Let $U$ be an $T^{n}$-invariant open subset. For the $T^{n}$-invariant local unitary trivializing section $\sigma$ of $L$ and $\nabla \sigma=-i \beta \sigma$, under the assumption $(*)$, we have the following formula.
\begin{lemma}\label{lem4-2-3} For all $t >0$, $e^{t\hat{\varphi}} \sigma= e^{t(\varphi -\beta(X_{\varphi}))}\sigma$.
\end{lemma}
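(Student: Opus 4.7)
The plan is to reduce the exponential to a straightforward series computation by showing that $\hat{\varphi}$ acts on $\sigma$ by multiplication by the function $g := \varphi - \beta(X_{\varphi})$, and that this multiplication operator commutes with itself under further iterations of $\hat{\varphi}$.

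First I would compute $\hat{\varphi}\sigma$ directly from the definitions. Since $\nabla \sigma = -i \beta \sigma$, we have $\nabla_{X_{\varphi}}\sigma = -i \beta(X_{\varphi})\sigma$, so
\begin{equation*}
\hat{\varphi}\sigma = -i \nabla_{X_{\varphi}} \sigma + \varphi \sigma = -\beta(X_{\varphi})\sigma + \varphi \sigma = g\,\sigma.
\end{equation*}
The next step is to verify that $X_{\varphi}(g) = 0$. For the first summand, $X_{\varphi}(\varphi) = d\varphi(X_{\varphi}) = -\omega(X_{\varphi}, X_{\varphi}) = 0$ by the defining relation $\iota_{X_{\varphi}}\omega = -d\varphi$; for the second summand, $X_{\varphi}(\beta(X_{\varphi})) = 0$ is exactly Lemma \ref{lem4-2-2}. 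Hence $X_{\varphi}(g) = 0$.

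Then I would prove by induction on $k$ that $\hat{\varphi}^{k}\sigma = g^{k}\sigma$ for all $k \ge 0$. The base case $k=0,1$ is the computation above. For the inductive step, using the Leibniz-type identity $\hat{\varphi}(f s) = -i(X_{\varphi}f)s + f\hat{\varphi}s$ with $f = g^{k}$ and $s = \sigma$, we get
\begin{equation*}
\hat{\varphi}(g^{k}\sigma) = -i\, k\, g^{k-1}\, X_{\varphi}(g)\,\sigma + g^{k}\hat{\varphi}\sigma = 0 + g^{k+1}\sigma,
\end{equation*}
closing the induction.

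Finally, summing the Lie series and using smoothness of $\varphi$ and $\beta(X_{\varphi})$ on the $T^{n}$-invariant open set $U$ (so that both $g$ and all its powers are locally bounded and the series converges uniformly on compact subsets in $U \times \RR$, in the sense of Definition \ref{def3-2-1}), we obtain
\begin{equation*}
e^{t\hat{\varphi}}\sigma = \sum_{k=0}^{\infty}\frac{t^{k}}{k!}\hat{\varphi}^{k}\sigma = \sum_{k=0}^{\infty}\frac{t^{k}}{k!}g^{k}\sigma = e^{tg}\sigma = e^{t(\varphi - \beta(X_{\varphi}))}\sigma.
\end{equation*}
The only mild subtlety I anticipate is the convergence/applicability of $e^{t\hat{\varphi}}$, but because the key identity $X_{\varphi}g = 0$ reduces every iterate to pointwise multiplication by $g^{k}$, the full convergence statement collapses to the classical convergence of $e^{tg}$, so no further analytic work should be needed.
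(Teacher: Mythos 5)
Your proposal is correct and follows essentially the same route as the paper: compute $\hat{\varphi}\sigma=(\varphi-\beta(X_{\varphi}))\sigma$, use $X_{\varphi}(\varphi)=0$ and Lemma \ref{lem4-2-2} to kill the derivative term at each iteration, and sum the resulting geometric-type series. Your version merely makes the induction and the convergence remark explicit where the paper says ``similarly''.
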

 
 \begin{proof} 
Since $\nabla \sigma=-i \beta \sigma$,
we have:
\begin{equation}
\hat{\varphi}\sigma = - i \nabla_{X_{\varphi}} \sigma + \varphi  \sigma  
= \left(\varphi -\beta\left(X_{\varphi}\right)\right) \sigma,
\end{equation}
 and 
 \begin{equation}\hat{\varphi}^{2}\sigma =\hat{\varphi}\left( \left(\varphi -\beta\left(X_{\varphi}\right)\right) \sigma\right)
 = - iX_{\varphi} \left(\varphi -\beta\left(X_{\varphi}\right)\right) \sigma+ \left(\varphi -\beta\left(X_{\varphi}\right)\right)\hat{\varphi} \sigma.
 \end{equation}
 By Lemma \ref{lem4-2-2}, we have $X_{\varphi}(\varphi)=0=X_{\varphi}(\beta(X_{\varphi}))$. Therefore 
 \begin{equation}
\hat{\varphi}^{2}\sigma  = \left(\varphi -\beta\left(X_{\varphi}\right)\right)^{2}\sigma.
\end{equation}
Similarly, for $k > 2, k \in \ZZ$,we have:
$\hat{\varphi}^{k}\sigma=\left(\varphi -\beta\left(X_{\varphi}\right)\right)^{k}\sigma, $
and therefore
$$e^{t\hat{\varphi}} \sigma= e^{t\left(\varphi -\beta\left(X_{\varphi}\right)\right)}\sigma.$$
\end{proof}

 \subsubsection{Local expressions of holomorphic sections} In this subsection, we will provide the local expression of holomorphic sections and the explicit formulas for applying $e^{t\hat{\varphi}}$ to local $\bar{\partial}_{L,0}$-holomorphic sections.
Let $\psi_{t}: (M, J_{t}) \rightarrow (M,J_{0})$ be a one-parameter family of diffeomorphisms given by imaginary-time flow $e^{-itX_{\varphi}}$. 

\begin{definition}\label{def3-0-4}
For any $s \in \Gamma(M,L)$,
if the Lie series $\sum_{k=0}^{\infty} \frac{(t)^{k}}{k!}\hat{\varphi}^{k}s$
is absolutely and uniformly convergent on compact subsets in $M \times \RR$, then we denote it as $e^{t\hat{\varphi}}s$ and say $e^{t\hat{\varphi}}$ can be applied to $s$.
\end{definition}
 
We will show that $e^{t\hat{\varphi}}$ can be applied to  $\bar{\partial}_{L,0}$-holomorphic section of $L$.
Moreover,

\begin{lemma}\label{lem4-2-1}
Under the assumption $(*)$, we have
\begin{enumerate}
\item $e^{-\frac{\rho_{t}}{2}}\sigma$ is a $T^{n}$-invariant $\bar{\partial}_{L,t}$-holomorphic section on $U$. And, for any $\bar{\partial}_{L,t}$-holomorphic section $s_{t} \in \Gamma(U,L)$,
$s_{t}$ can be expressed as 
\begin{equation}
s_{t}=f_{t}e^{-\frac{\rho_{t}}{2}}\sigma,
\end{equation}
where $f_{t}$ is a $\bar{\partial}_{t}$-holomorphic function on $U$ and $\rho_{t}$ is the $T^{n}$-invariant K\"ahler potential of $\omega$ with respect to the complex structure $J_{t}$.

\item 
\begin{equation}\label{eq4-0-13}
e^{t\hat{\varphi}}(e^{-\frac{\rho_{0}}{2}}\sigma) =e^{-\frac{\rho_{t}}{2} }\sigma, 
\end{equation}
where $\rho_{t}=\rho_{0} -2t \varphi +2t\beta(X_{\varphi})$.
\item if $s_{0}=f_{0}e^{-\frac{\rho_{0}}{2}}\sigma \in \Gamma(U,L)$ is a $\bar{\partial}_{L,0}$-holomorphic section, then 
\begin{equation}
e^{t\hat{\varphi}}s=(\psi_{t}^{*}f_{0})e^{-\frac{\rho_{t}}{2}}\sigma
\end{equation}
is a $\bar{\partial}_{L,t}$-holomorphic section.
In particular, if $s_{0}$ is $T^{n}$-invariant, then $\psi_{t}^{*}f_{0}=f_{0}$, and therefore
\begin{equation}
e^{t\hat{\varphi}}s_{0} =e^{t(\varphi- \beta(X_{\varphi}))}s_{0}.
\end{equation}
\end{enumerate}
\end{lemma}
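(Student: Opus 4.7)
The plan is to prove the three parts in sequence, chaining together Theorem \ref{thm3-4-2} (the formula $\rho_t = \rho_0 - 2t\varphi + 2t\beta(X_\varphi)$), Lemma \ref{lem4-2-3} (the action of $e^{t\hat{\varphi}}$ on $\sigma$), and Remark \ref{rk2-0-11} (the identity $\psi_t^* f = e^{-itX_\varphi} f$ on $J_0$-holomorphic functions). For part (1), the $T^n$-invariance of $e^{-\rho_t/2}\sigma$ is immediate from the $T^n$-invariance of $\rho_t$ and of $\sigma$. To establish $\bar{\partial}_{L,t}$-holomorphicity, I compute $\nabla(e^{-\rho_t/2}\sigma) = (-\tfrac{1}{2}d\rho_t - i\beta) e^{-\rho_t/2}\sigma$ using $\nabla\sigma = -i\beta\sigma$, and extract the $(0,1)$-part with respect to $J_t$; the two terms cancel since Theorem \ref{thm3-4-2} gives $\beta^{0,1}_t = \tfrac{i}{2}\bar{\partial}_t\rho_t$. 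Since $e^{-\rho_t/2}\sigma$ is nowhere vanishing, any smooth section over $U$ admits a unique representation $s_t = f_t e^{-\rho_t/2}\sigma$, and the Leibniz rule forces $\bar{\partial}_t f_t = 0$ whenever $s_t$ is holomorphic.

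For part (2), the key point is that $T^n$-invariance of $\rho_0$ gives $X_\varphi \rho_0 = 0$, so the Leibniz-type formula $\hat{\varphi}(fs) = -i(X_\varphi f) s + f\hat{\varphi} s$ combined with Lemma \ref{lem4-2-3} yields $\hat{\varphi}(e^{-\rho_0/2}\sigma) = (\varphi - \beta(X_\varphi)) e^{-\rho_0/2}\sigma$. Since $X_\varphi$ also annihilates both $\varphi$ and $\beta(X_\varphi)$ by Lemma \ref{lem4-2-2}, induction gives $\hat{\varphi}^k(e^{-\rho_0/2}\sigma) = (\varphi - \beta(X_\varphi))^k e^{-\rho_0/2}\sigma$; summing the series produces $e^{-\rho_0/2 + t(\varphi - \beta(X_\varphi))}\sigma$, which equals $e^{-\rho_t/2}\sigma$ by Theorem \ref{thm3-4-2}.

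For part (3), I observe that acting by $\hat{\varphi}$ on a section of the form $f_0 \cdot (e^{-\rho_0/2}\sigma)$ is equivalent, via the Leibniz rule and part (2), to acting on the coefficient $f_0$ by the operator $A := -iX_\varphi + M_{\varphi - \beta(X_\varphi)}$, where $M_g$ denotes multiplication by $g$. The commutator $[-iX_\varphi, M_g] = -i M_{X_\varphi g}$ vanishes for $g = \varphi - \beta(X_\varphi)$ by Lemma \ref{lem4-2-2}, so $A$ splits as a sum of commuting operators and $e^{tA} f_0 = (e^{-itX_\varphi} f_0) \cdot e^{t(\varphi - \beta(X_\varphi))}$. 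Combining with $e^{t(\varphi - \beta(X_\varphi))} e^{-\rho_0/2} = e^{-\rho_t/2}$ and Remark \ref{rk2-0-11} yields $e^{t\hat{\varphi}} s_0 = (\psi_t^* f_0) e^{-\rho_t/2}\sigma$; part (1) gives $\bar{\partial}_{L,t}$-holomorphicity, since $\psi_t:(M,J_t)\to (M,J_0)$ is a biholomorphism carrying $\bar{\partial}_0 f_0 = 0$ to $\bar{\partial}_t(\psi_t^* f_0) = 0$. The $T^n$-invariant special case follows because $f_0$ is then itself $T^n$-invariant, giving $X_\varphi f_0 = 0$ and $\psi_t^* f_0 = f_0$. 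The main technical obstacle will be justifying the absolute and uniform convergence on compact subsets of the Lie series $\sum_k \tfrac{t^k}{k!}\hat{\varphi}^k s_0$ together with the rearrangement used to factor $e^{tA}$; this rests on Theorem \ref{thm3-2} and Lemma \ref{lem3-1-1}, which together guarantee that $e^{-itX_\varphi}$ can be applied completely to any local $J_0$-holomorphic function, while the scalar factor $e^{t(\varphi - \beta(X_\varphi))}$ converges trivially and a Cauchy-product argument legitimises the factorisation.
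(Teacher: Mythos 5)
Your proposal is correct, and parts (1) and (2) follow the paper's own argument essentially verbatim: (1) is the cancellation $-\tfrac12\bar\partial_t\rho_t-i\beta^{0,1}_t=0$ coming from Theorem \ref{thm3-4-2}, and (2) is exactly the paper's chain of $X_\varphi\rho_0=0$, Lemma \ref{lem4-2-3}, and the potential formula. Where you genuinely diverge is part (3). The paper passes to the associated functions $\tilde s$ on $L^{\times}$ via Kostant's correspondence, uses $-i\tilde X_\varphi\tilde s=\widetilde{\hat\varphi s}$ (Proposition \ref{lem3-0-12}) to convert $e^{t\hat\varphi}$ into the Lie series $e^{-it\tilde X_\varphi}$, and then invokes the product rule for Lie series (Lemma \ref{lem3-1-1}) to factor $e^{-it\tilde X_\varphi}(\tilde f_0\cdot\widetilde{e^{-\rho_0/2}\sigma})$ into $(\widetilde{\psi_t^*f_0})(\widetilde{e^{t\hat\varphi}e^{-\rho_0/2}\sigma})$. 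You instead stay on the base: you identify $\hat\varphi(f\cdot e^{-\rho_0/2}\sigma)=(Af)\cdot e^{-\rho_0/2}\sigma$ with $A=-iX_\varphi+M_{\varphi-\beta(X_\varphi)}$, note the two summands commute by Lemma \ref{lem4-2-2}, and factor $e^{tA}$ by a Cauchy-product rearrangement whose absolute convergence is supplied by the complete applicability of $e^{-itX_\varphi}$ to $J_0$-holomorphic functions (the same fact, via Remark \ref{rk2-0-11}, that the paper also relies on). The two routes are logically equivalent --- your commutator computation is the downstairs shadow of the paper's Lie-series product rule on $L^{\times}$ --- but yours is more self-contained, avoiding the $L^{\times}$ formalism entirely at this stage. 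The trade-off is that the paper's detour through $\tilde X_\varphi$ and $\tilde s$ is not wasted effort: it sets up exactly the machinery reused in Theorem \ref{thm8} to construct the lift $\tilde\psi_t$ and to prove $e^{t\hat\varphi}s_0=\Psi_t^{-1}(\psi_t^*s_0)$, so if you adopt your version here you will still need to introduce that correspondence later.
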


\begin{proof}
As $-i\beta= \frac{1}{2}(\bar{\partial}_{t} - \partial_{t})\rho_{t}$, $(1)$ can be easily checked.\\
(2)
Since $\rho_{0}$ is $T^{n}$-invariant, we have $X_{\varphi}\rho_{0}=0$. This implies that 
$$\hat{\varphi}(-e^{\frac{\rho_{0}}{2}}\sigma) = -iX_{\varphi}(e^{-\frac{\rho_{0}}{2}})\sigma+e^{-\frac{\rho_{0}}{2}} \hat{\varphi}\sigma=e^{-\frac{\rho_{0}}{2}} \hat{\varphi}\sigma.$$
Therefore
$e^{t\hat{\varphi}}(-e^{\frac{\rho_{0}}{2}}\sigma)= e^{-\frac{\rho_{0}}{2}}(e^{t\hat{\varphi}}\sigma).$ 
By Lemma \ref{lem4-2-3} and Theorem \ref{thm3-4-2}, we obtain:
\begin{equation}\label{eq4-0-12}
e^{t\hat{\varphi}}(e^{-\frac{\rho_{0}}{2}}\sigma)= e^{-\frac{\rho_{0}}{2}}\cdot e^{t(\varphi-\beta(X_{\varphi}))} \sigma=e^{-\frac{\rho_{t}}{2}}\sigma,
\end{equation}
with $\rho_{t}=\rho_{0} -2t \varphi +2t\beta(X_{\varphi})$. Therefore $e^{t\hat{\varphi}}$ can be applied to $e^{-\frac{\rho_{0}}{2}}\sigma$.\\
(3) Recall that for any smooth section $s$ of $L$, there is a complex-valued function $\tilde{s}$ on $L^{\times}$ defined by 
$\tilde{s}(x)=\frac{s(\pi (x))}{x}$.
By Lemma \ref{lem3-0-12} in the Appendix , we have 
\begin{equation}\label{eq4-0-14}
-i\tilde{X}_{\varphi}\tilde{s}= \widetilde{\hat{\varphi}s}.
\end{equation}
Therefore, if $e^{t\hat{\varphi}}s$ exists, then $e^{-it\tilde{X}_{\varphi}}\tilde{s}$ exists and
\begin{equation}\label{eq4-0-15}
e^{-it\tilde{X}_{\varphi}} \tilde{s}=\sum_{j}\frac{1}{j!}t^{j} (-i)^{j}{\tilde{X}^{j}_{\varphi} \tilde{s}}=
\sum_{j}\frac{1}{j!}t^{j}\widetilde{\hat{\varphi}^{j} s}
=\widetilde{e^{t\hat{\varphi}} s},
\end{equation}
  In particular, as $e^{t\hat{\varphi}} (e^{\frac{\rho_{0}}{2}}\sigma)$ exists by equation (\ref{eq4-0-12}), we have $e^{-it\tilde{X}_{\varphi}} \widetilde{e^{\frac{\rho_{0}}{2}}\sigma}$ exists and 
\begin{equation}\label{eq4-0-16}
e^{-it\tilde{X}_{\varphi}} \widetilde{e^{\frac{\rho_{0}}{2}}\sigma}=
\widetilde{e^{t\hat{\varphi}} (e^{\frac{\rho_{0}}{2}}\sigma)}.
\end{equation}
Note that for any $\bar{\partial}_{0}$-holomorphic function $f_{0}$ on $U$, we have 
$e^{-itX_{\varphi}}f_{0}=\psi_{t}^{*}f_{0}$.
Using Lemma \ref{lem3-1-1}, we have $e^{-it\tilde{X}_{\varphi}}  \widetilde{s_{0}}$ exists for any  $\bar{\partial}_{L,0}$-holomorphic section $s_{0}=f_{0}e^{-\frac{\rho_{0}}{2}}\sigma$, and 
\begin{equation}\label{eq4-0-18}
e^{-it\tilde{X}_{\varphi}} ( \widetilde{f_{0} e^{\frac{\rho_{0}}{2}}\sigma})=
(e^{-it\tilde{X}_{\varphi}} 
\tilde{f_{0}} )
(e^{-it\tilde{X}_{\varphi}} \widetilde{e^{\frac{\rho_{0}}{2}}\sigma})= (\widetilde{\psi_{t}^{*}f_{0}} )
(\widetilde{e^{t\hat{\varphi}} e^{\frac{\rho_{0}}{2}}\sigma}).
\end{equation}
Applying equation (\ref{eq4-0-15}) to section $s_{0}$, we have  $e^{t\hat{\varphi}}(s_{0} )$ exists and 
\begin{equation}\label{eq4-0-19}
e^{-it\tilde{X}_{\varphi}}  \widetilde{s_{0} }=\widetilde{e^{t\hat{\varphi}}s_{0} }.
\end{equation}
Combine (\ref{eq4-0-13}), (\ref{eq4-0-18}) and (\ref{eq4-0-19}),
we have:
\begin{equation}\label{eq4-0-20}
e^{t\hat{\varphi}}(f_{0} e^{\frac{\rho_{0}}{2}}\sigma)=(\psi_{t}^{*}f_{0} )
(e^{t\hat{\varphi}} e^{\frac{\rho_{0}}{2}}\sigma)=(\psi_{t}^{*}f_{0})e^{\frac{\rho_{t}}{2}}\sigma.
\end{equation}
In particular, if $s_{0}$ is $T^{n}$-invariant, then $\psi_{t}^{*}f_{0}=f_{0}$. This implies 
$e^{t\hat{\varphi}}s_{0} =e^{t(\varphi- \beta(X_{\varphi}))}s_{0}.$
\end{proof}

 \begin{definition} Under the assumption $(*)$,
 the quantum space $\shH_{t}$ associated to $\shP_{t}$ is the following subspace of $\Gamma(M,L)$: 
$$\shH_{t} = \{ s \in\Gamma(M,L)\mid  \nabla _{\xi}s =0, ~\forall~ \xi \in \Gamma(M,\shP_{t})\}.$$
\end{definition}

In particular, $\shH_{t}$ is the space $H_{\bar{\partial}_{L, t}}^{0}(M,L)$ of $\bar{\partial}_{L,t}$-holomorphic sections because $\shP_{t}$ is K\"ahler polarization.

  \begin{theorem}\label{thm7}
Under the assumption $(*)$, for all $t >0$, the operator $e^{t\hat{\varphi}}: \shH_{0} \rightarrow \shH_{t}$ is a $T^{n}$-equivariant isomorphism.
 \end{theorem}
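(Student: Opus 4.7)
The plan is to leverage the explicit local formula from Lemma~\ref{lem4-2-1}(3) to show simultaneously that $e^{t\hat\varphi}s_0$ is well-defined for $s_0\in\shH_0$ and lies in $\shH_t$, and then to build an explicit inverse out of the biholomorphism $\psi_t^{-1}:(M,J_0)\to(M,J_t)$ of Theorem~\ref{thm3-2}.

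First, cover $M$ by $T^n$-invariant open sets $U_\alpha$, each carrying a $T^n$-invariant unitary trivializing section $\sigma_\alpha$ of $L$ and a $T^n$-invariant Kähler potential $\rho_{0,\alpha}$ for $\omega$ with respect to $J_0$ (available by the opening remarks of Section 3). For any $s_0\in\shH_0$, Lemma~\ref{lem4-2-1}(1) gives the local decomposition $s_0|_{U_\alpha}=f_{0,\alpha}\,e^{-\rho_{0,\alpha}/2}\sigma_\alpha$ with $f_{0,\alpha}$ a $\bar\partial_0$-holomorphic function. Lemma~\ref{lem4-2-1}(3) then shows that the Lie series defining $e^{t\hat\varphi}s_0$ converges on $U_\alpha$ and equals $(\psi_t^*f_{0,\alpha})\,e^{-\rho_{t,\alpha}/2}\sigma_\alpha$, with $\rho_{t,\alpha}$ given by Theorem~\ref{thm3-4-2}; this is $\bar\partial_{L,t}$-holomorphic on $U_\alpha$ by Lemma~\ref{lem4-2-1}(1) applied at time $t$. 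Since $\hat\varphi$ is a single globally defined operator on $\Gamma(M,L)$, the local sums agree on overlaps and paste to a global element $e^{t\hat\varphi}s_0\in\shH_t$. For $T^n$-equivariance, note that $\varphi=\phi\circ\mu$ is $T^n$-invariant, hence $X_\varphi=\sum_j(\partial\phi/\partial\mu_j)\xi_j^{\#}$ is $T^n$-invariant and the operator $\hat\varphi=-i\nabla_{X_\varphi}+\varphi$ commutes with the $T^n$-action on $\Gamma(M,L)$; this property passes to $\hat\varphi^k$ and to $e^{t\hat\varphi}$.

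To prove that $e^{t\hat\varphi}$ is an isomorphism, construct an inverse $\Phi_t:\shH_t\to\shH_0$ by running the construction in reverse. Given $s_t\in\shH_t$, write locally $s_t|_{U_\alpha}=f_{t,\alpha}\,e^{-\rho_{t,\alpha}/2}\sigma_\alpha$ and set
\[
\Phi_t(s_t)\big|_{U_\alpha}\ :=\ \bigl((\psi_t^{-1})^{*}f_{t,\alpha}\bigr)\,e^{-\rho_{0,\alpha}/2}\sigma_\alpha.
\]
Since $\psi_t:(M,J_t)\to(M,J_0)$ is a biholomorphism, $(\psi_t^{-1})^{*}f_{t,\alpha}$ is $\bar\partial_0$-holomorphic on $U_\alpha$; Theorem~\ref{thm3-4-2} together with the $T^n$- and $X_\varphi$-invariance of the correction $-2t\varphi+2t\beta(X_\varphi)$ (Lemma~\ref{lem4-2-2}) ensures that the transition relations satisfied by $\{f_{t,\alpha}\}$ translate into those needed for $\{(\psi_t^{-1})^{*}f_{t,\alpha}\}$, so the pieces glue to $\Phi_t(s_t)\in\shH_0$. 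Applying Lemma~\ref{lem4-2-1}(3) on each chart yields $e^{t\hat\varphi}\circ\Phi_t=\operatorname{id}_{\shH_t}$ and $\Phi_t\circ e^{t\hat\varphi}=\operatorname{id}_{\shH_0}$.

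The main technical obstacle lies in the gluing: one must check that the local sections $(\psi_t^*f_{0,\alpha})\,e^{-\rho_{t,\alpha}/2}\sigma_\alpha$ are independent of the chart, equivalently that the difference $\rho_{t,\alpha}-\rho_{t,\beta}$ on $U_\alpha\cap U_\beta$ coincides with $\rho_{0,\alpha}-\rho_{0,\beta}$, so that the new decompositions are compatible with the same transition functions of $L$. By Theorem~\ref{thm3-4-2} this reduces to showing that the correction $-2t\varphi+2t\beta_\alpha(X_\varphi)$ depends only on globally defined $T^n$-invariant data, which is precisely the content of Lemma~\ref{lem4-2-2}. Equivalently, the same conclusion follows from the global definition of $\hat\varphi$ on $\Gamma(M,L)$: the local formulas of Lemma~\ref{lem4-2-1}(3) are necessarily consistent because they represent values of one and the same globally defined series, the uniform convergence of which on compact subsets of $M$ is inherited from its uniform convergence on each $U_\alpha$ together with the compactness of $M$.
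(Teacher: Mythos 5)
Your proposal is correct and follows essentially the same route as the paper's proof: both rest on the local formula of Lemma \ref{lem4-2-1}(3) over a $T^{n}$-invariant cover, a gluing step, an inverse built from the reverse flow, and the $T^{n}$-invariance of $\varphi$ and $X_{\varphi}$ for equivariance. The only differences are cosmetic — you get the gluing for free from the fact that the partial sums of the globally defined operator $\hat{\varphi}$ are global sections (where the paper verifies the cocycle condition $g_{t}^{U}=e^{ih_{UV}}g_{t}^{V}$ explicitly), you realize the inverse via $(\psi_{t}^{-1})^{*}$ rather than as $e^{-t\hat{\varphi}}$, and your group-level commutation claim for equivariance implicitly uses the vanishing of the curvature term $F_{\nabla}(\xi_{j}^{\#},X_{\varphi})=-i\{\mu_{j},\varphi\}=0$, which the paper records explicitly.
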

 
  \begin{proof}
  We first show that 
for all $t >0$ and any $s \in \shH_{0}$,
 $e^{t\hat{\varphi}}s$ is well defined and $e^{t\hat{\varphi}}s \in \shH_{t}$.
Note that $M$ can be covered by $T^{n}$-invariant open subsets. On each such subset $U$, by Lemma \ref{lem4-2-1}, $s|_{U} = f_{0}^{U}e^{-\frac{1}{2}\rho_{0}^{U}} \sigma_{U}$. Here, $f_{0}^{U}$ represents a $\bar{\partial}_{0}$-holomorphic function, and $\rho_{0}^{U}$ denotes a $T^{n}$-invariant Kähler potential of $\omega$ with respect to $J_{0}$. By Lemma \ref{lem4-2-1}, 
\begin{equation}\label{eq3-0-20}
e^{t\hat{\varphi}}(s|_{U})= (\psi_{t}^{*}f_{0}^{U})(e^{-\frac{1}{2}\rho_{0}^{U}}e^{t(\varphi -\beta_{U}(X_{\varphi}))}\sigma_{U})=:g_{t}^{U}\sigma_{U},\end{equation}
with $g_{t}^{U}=(e^{-itX_{\varphi}}f_{0}^{U})e^{-\frac{1}{2}\rho_{0}^{U}}e^{t(\varphi -\beta_{U}(X_{\varphi}))}$.
Therefore $e^{t\hat{\varphi}}(s|_{U})$ is well defined on $U$ and is holomorphic with respect to $\bar{\partial}_{L,t}$.
To show that $e^{t\hat\varphi}s$ is well defined, it is equivalent to demonstrate that 
$e^{t\hat\varphi}(s|_{U})=e^{t\hat\varphi}(s|_{V})$ on any $T^{n}$-invariant open sets $U$ and $V$ such that $U \cap V \ne \emptyset $. 
By equation (\ref{eq3-0-20}), it is enough to show on $U\cap V$,
$$g_{t}^{U}=e^{ih_{UV}}g_{t}^{V},$$
where
$h_{UV}$ is the real function determined by
$\sigma_{U} = \sigma_{V}e^{-ih_{UV}}$.
Note that $\beta_{U}-\beta_{V}=dh_{UV}$, as $\nabla \sigma_{U}=-i\beta_{U}\sigma_{U}$.
This implies
\begin{equation} \label{eq3-0-22}
\beta_{U}(X_{\varphi})-\beta_{V}(X_{\varphi})=X_{\varphi}(h_{UV})=0,
\end{equation}
as $h_{UV}$ is $T^{n}$-invariant.
According to
$(f_{0}^{U}e^{-\frac{1}{2}\rho_{0}^{U}} \sigma_{U})=s=(f_{0}^{V}e^{-\frac{1}{2}\rho_{0}^{V}} \sigma_{V})$ on $U \cap V$,
we have 
\begin{equation} \label{eq3-0-21}
f_{0}^{U}=e^{ih_{UV}}e^{\frac{1}{2}(\rho_{0}^{U} -\rho_{0}^{V})}f_{0}^{V}. 
\end{equation}
Since all of $\rho_{0}^{U}, \rho_{0}^{V}, e^{ih_{UV}}$ are $T^{n}$-invariant on $U\cap V$, we have:
\begin{equation}\label{eq3-0-23}
e^{-itX_{\varphi}}f_{0}^{U}=(e^{ih_{UV}}e^{\frac{1}{2}(\rho_{0}^{U} -\rho_{0}^{V})})e^{-itX_{\varphi}}f_{0}^{V}.
\end{equation}
Combining equations (\ref{eq3-0-20}), (\ref{eq3-0-21}) with (\ref{eq3-0-23}), using $\sigma_{U} = \sigma_{V}e^{-ih_{UV}}$, we have
\begin{align*}
g_{t}^{U}&= \left(e^{-itX_{\varphi}}f_{0}^{U}\right)e^{-\frac{1}{2}\rho_{0}^{U}}e^{t\left(\varphi -\beta_{U}\left(X_{\varphi}\right)\right)}\sigma_{U}\\
&=(e^{ih_{UV}}e^{\frac{1}{2}(\rho_{0}^{U} -\rho_{0}^{V})})(e^{-itX_{\varphi}}f_{0}^{V})e^{-\frac{1}{2}\rho_{0}^{U}}e^{t\left(\varphi -\beta_{V}\left(X_{\varphi}\right)\right)}\sigma_{U}\\
&=\left(e^{-itX_{\varphi}}f_{0}^{V}\right)e^{-\frac{1}{2}\rho_{0}^{V}}e^{t\left(\varphi -\beta_{V}\left(X_{h}\right)\right)}\sigma_{V}\\
&=e^{ih_{UV}}g_{t}^{V}.
\end{align*}
Therefore
$\{(U,e^{t\hat{\varphi}}(s|_{U})\}$ can be glued together to define a global section denoted by $s_{t} \in \shH_{t} $.
Similarly, we can show that $e^{-t\hat{\varphi}}s_{t}=s_{0}$ and $e^{t\hat{\varphi}}: \shH_{0} \rightarrow \shH_{t}$ is an isomorphism.

To show that $e^{t\hat{\varphi}}$ is a $T^{n}$-equivariant map, it is enough to show, for all $s \in \shH_{0}$,
$$\xi_{j}^{\#} (\hat{\varphi}s) = \hat{\varphi}(\xi_{j}^{\#}s), ~~j=1,\cdots,n$$
where $\xi_{1}, \cdots, \xi_{n}$ is a basis of $\fot$ and $\xi_{j}^{\#}$'s are the fundamental vector fields associated to $\xi_{j}$.
Since the pre-quantum line bundle is $T^{n}$-invariant, one has $\xi_{j}^{\#}s = \nabla_{\xi_{j}^{\#}}s+i\mu_{j}s, \forall j$. Using $\xi_{j}^{\#}(\varphi )=0$ and $X_{\varphi} \mu_{j}=0$, we obtain:
\begin{align*}
&\xi_{j}^{\#} (\hat{\varphi}s) -\hat{\varphi}(\xi_{j}^{\#}s)
=(\nabla_{\xi_{j}^{\#}} +i\mu_{j} )(-i\nabla_{X_{\varphi}}s + \varphi s-(-i\nabla_{X_{\varphi}} + \varphi)(\nabla_{\xi_{j}^{\#}}s +i\mu_{j} s)\\
&=\nabla_{\xi_{j}^{\#}}(-i\nabla_{X_{\varphi}}s + \varphi s) +\mu_{j}(\nabla_{X_{\varphi}}s +i \varphi s)+i\nabla_{X_{\varphi}}(\nabla_{\xi_{j}^{\#}}s +i\mu_{j} s)-\varphi (\nabla_{\xi_{j}^{\#}}s +i\mu_{j} s)\\
&=-i(\nabla_{\xi_{j}^{\#}}\nabla_{X_{\varphi}}-\nabla_{X_{\varphi}}\nabla_{\xi_{j}^{\#}})s.
\end{align*}
In Lemma \ref{lem4-2-2}, we have showed that 
$X_{\varphi}=\sum_{j} \frac{\partial \phi}{\partial \mu_{j}} \xi^{\#}_{j}$ is a $T^{n}$-invariant vector field, i.e. $[\xi_{j}^{\#}, X_{\varphi}]=0$. By the pre-quantum condition $F_{\nabla}=-i\omega$, we have
\begin{equation}
F_{\nabla}(\xi_{j}^{\#}, X_{\varphi})=-i\omega(\xi_{j}^{\#}, X_{\varphi})=-i\{\mu_{j}, \varphi\}=0.
\end{equation}

Therefore we have 
\begin{equation}
    \xi_{j}^{\#} (\hat{\varphi}s) -\hat{\varphi}(\xi_{j}^{\#}s)
=-i[\nabla_{[\xi_{j}^{\#},X_{\varphi}]}+F_{\nabla}(\xi_{j}^{\#},X_{\varphi})]s=0.
\end{equation}
 \end{proof}
 
\subsubsection{Lifting a one-parameter family of diffeomorphisms $\psi_{t}$ on $M$ to $\tilde{\psi_{t}}$ on $L$}
In the previous subsection, we saw that given a $\bar{\partial}_{L,0}$-holomorphic section $s_0$ of $L$, we can construct a one-parameter family of sections $s_t$ by applying the operator $e^{t\hat{\varphi}}$ to $s_0$.

In this subsection, we will prove the existence of a one-parameter family of line bundle isomorphisms $\tilde{\psi}_t$ of $L$ that lifts the diffeomorphisms $\psi_{t}$
 given by imaginary time flow $e^{-itX_{\varphi}}$ on the base manifold $M$ (see Theorem \ref{thm8}) such that $\tilde{\psi}_t^*\widetilde{s_0}=\widetilde{s_t}$. Furthermore, we will demonstrate that $\tilde{\psi}_t$ can be obtained by applying the imaginary time flow generated by a vector field $\tilde{X}_{\varphi}$ on $L^{\times}$. 
 Note that there is a natural $T^{1}$-action on the $\CC^{\times}$-bundle $L^{\times}$. Denote the fundamental vector field induced by the $T^{1}$-action by $\xi^{\#}$. 
 Then $\eta(\varphi)=\tilde{\varphi}\xi^{\#}$ is a real vertical vector field on $L^{\times}$, which can be naturally extended to $L$. More precisely $\eta(\varphi)$ is defined by:
\begin{equation} (\eta(\varphi) \psi)(q)=\frac{d}{dt}\psi( e^{- i t \tilde{\varphi}(q)}q)|_{t=0},
\end{equation}
for any $\psi \in C^{\infty}(L^{\times}), q \in L^{\times}$ with $\tilde{\varphi}(q)=\varphi(\pi(q))$.
The real vector field $\tilde{X}_{\varphi}$ is defined by:
 \begin{equation}
 \tilde{X}_{\varphi}= X_{\varphi}^{H}+ \eta(\varphi),
 \end{equation}
where $X_{\varphi}^{H}$ is the horizontal lifting of $X_{\varphi}$ with respect to the connection $\nabla$ on $L^{\times}$. 

\begin{definition}
A smooth map $\psi: M \rightarrow M$ is called liftable if there exists a map of line bundles  $\tilde{\psi}: L \rightarrow L$ such that:
the following diagram 

$$
\begin{tikzcd}
L \arrow{r}{\tilde{\psi}}\arrow{d}&L\arrow{d}\\
M  \arrow{r}{\psi}& M
\end{tikzcd}
$$
commutes. And $\tilde{\psi}$ is called a lifting of $\psi$ to $L$.
\end{definition}

\begin{theorem}\label{thm8}
 Let $\psi_{t}: (M,J_{t}) \rightarrow (M,J_{0})$ be the diffeomorphisms given by the imaginary time flow $e^{-itX_{\varphi}}$. 
 Then there exist maps of line bundles  $\tilde{\psi}_{t}:(L, \bar{\partial}_{L,t}) \rightarrow (L, \bar{\partial}_{L,0})$ given by applying $e^{-it\tilde{X}_{\varphi}}$ to local holomorphic coordinates with respect to $\bar{\partial}_{L,0}$ such that the following diagram
 $$
\begin{tikzcd}
(L, \bar{\partial}_{L,t}) \arrow{r}{\tilde{\psi}_{t}} \arrow{d}{\pi}&(L, \bar{\partial}_{L,0})\arrow{d}{\pi}\\
(M, J_{t}) \arrow{r}{\psi_{t}}& (M,J_{0})
\end{tikzcd}
$$
 commutes (i.e. there exist bundle isomorphisms $\psi_{t}^{*}(L,\bar{\partial}_{L,0}) \cong (L,\bar{\partial}_{L,t}) $ ). Moreover, for any section $s_{0} \in \shH_{0}$, we have 

\begin{equation}
e^{t\hat{\varphi}}s_{0}=\Psi_{t}^{-1}(\psi_{t}^{*}s_{0}),
\end{equation}
where $\Psi_{t}: \Gamma(M,L) \rightarrow \Gamma(M, \psi_{t}^{*}L)$ is the isomorphism induced by $\tilde{\psi}_{t}$. 
 
\end{theorem}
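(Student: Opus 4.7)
The plan is to construct $\tilde{\psi}_t$ as the ``imaginary-time flow'' of $\tilde{X}_\varphi$ on $L$, in exact parallel with the construction of $\psi_t$ on $M$ in Theorem \ref{thm3-2}. The bridge between the two constructions is Kostant's identity (Proposition \ref{lem3-0-12}): under the $E(L)$-isomorphism $s \mapsto \tilde{s}$ with $\tilde{s}(q) = s(\pi q)/q$, the prequantum operator $\hat{\varphi}$ acting on sections of $L$ corresponds to $-i\tilde{X}_\varphi$ acting on $\tilde S_m$. Hence convergence of $e^{-it\tilde{X}_\varphi}$ applied to the tilde-function of a $\bar{\partial}_{L,0}$-holomorphic section is equivalent to convergence of $e^{t\hat{\varphi}}$ on that section, which was already established in Lemma \ref{lem4-2-1}.

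Concretely, I would cover $M$ by the $T^n$-invariant opens $U$ over which $L$ admits the $\bar{\partial}_{L,0}$-holomorphic frame $e^{-\rho_0^U/2}\sigma_U$, and define $\tilde{\psi}_t$ on $\pi^{-1}(U)$ by declaring $\tilde{\psi}_t^\ast f = e^{-it\tilde{X}_\varphi} f$ for every local $\bar{\partial}_{L,0}$-holomorphic function $f$ on $L^\times$, the right-hand side being convergent by the argument just described. Since $\tilde{X}_\varphi = X_\varphi^H + \tilde\varphi\,\xi^{\#}$ projects to $X_\varphi$ on $M$ (as $\eta(\varphi)$ is vertical) and both of its summands are invariant under the $\CC^\times$-action on $L^\times$ ($X_\varphi^H$ by compatibility of the connection with the Hermitian action, $\eta(\varphi)$ because $\tilde\varphi$ is a $\pi^\ast$-pullback), the resulting $\tilde{\psi}_t$ is $\CC^\times$-equivariant and covers $\psi_t$, so the diagram commutes. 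Gluing the local definitions across overlaps proceeds verbatim as in the transition computation in the proof of Theorem \ref{thm7}, with $h_{UV}$ being $T^n$-invariant again providing the needed cancellations.

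The holomorphic intertwining $\tilde{\psi}_t^\ast(L,\bar{\partial}_{L,0}) \cong (L, \bar{\partial}_{L,t})$ is then immediate: by Lemma \ref{lem4-2-1}(1)(2), the image of the local $\bar{\partial}_{L,0}$-holomorphic frame $e^{-\rho_0/2}\sigma$ corresponds via the tilde-correspondence to $e^{t\hat{\varphi}}(e^{-\rho_0/2}\sigma) = e^{-\rho_t/2}\sigma$, which is a $\bar{\partial}_{L,t}$-holomorphic frame, so $\tilde{\psi}_t$ sends $\bar{\partial}_{L,0}$-holomorphic local trivializations to $\bar{\partial}_{L,t}$-holomorphic ones by construction.

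Finally, for the identity $e^{t\hat{\varphi}}s_0 = \Psi_t^{-1}(\psi_t^\ast s_0)$, I would pass to tilde-functions on $L^\times$. By equation (\ref{eq4-0-15}), $\widetilde{e^{t\hat{\varphi}}s_0}(q) = (e^{-it\tilde{X}_\varphi}\tilde{s}_0)(q) = \tilde{s}_0(\tilde{\psi}_t(q))$, using the very definition of $\tilde{\psi}_t$. On the other hand, unwinding $\Psi_t$ directly: for $p \in M$ and $q \in L_p^\times$, $\Psi_t^{-1}(\psi_t^\ast s_0)(p) = \tilde{\psi}_t^{-1}\!\bigl(s_0(\psi_t p)\bigr) \in L_p$, and using $\tilde{\psi}_t(q) \in L_{\psi_t p}$ together with linearity of $\tilde{\psi}_t$ on fibers, one computes $\widetilde{\Psi_t^{-1}(\psi_t^\ast s_0)}(q) = \tilde{s}_0(\tilde{\psi}_t(q))$. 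Equality of the two tilde-functions gives the identity via the $E(L)$-isomorphism of Kostant. \emph{The main obstacle} in making this rigorous is the global existence of the imaginary-time flow $\tilde{\psi}_t$ on $L$, where $\tilde{X}_\varphi$ is only a \emph{real} vector field and $-it\tilde{X}_\varphi$ cannot be integrated naively; this is precisely where the tilde-correspondence combined with Lemma \ref{lem4-2-1} pays off, reducing the delicate convergence problem on $L$ to the already-established convergence of $e^{t\hat{\varphi}}$ on sections.
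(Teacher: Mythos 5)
Your proposal is correct and follows essentially the same route as the paper: define $\tilde{\psi}_{t}$ by applying $e^{-it\tilde{X}_{\varphi}}$ to local holomorphic data over $T^{n}$-invariant opens, glue using the $T^{n}$-invariance of the transition data, identify the resulting holomorphic structure with $\bar{\partial}_{L,t}$, and derive $e^{t\hat{\varphi}}s_{0}=\Psi_{t}^{-1}(\psi_{t}^{*}s_{0})$ from Kostant's identity $-i\tilde{X}_{\varphi}\tilde{s}=\widetilde{\hat{\varphi}s}$ via the tilde-correspondence. The only cosmetic difference is that the paper establishes convergence on the fiber coordinate by the explicit splitting $\tilde{X}_{\varphi}=X_{\varphi}^{H}+\eta(\varphi)$ with $[X_{\varphi}^{H},\eta(\varphi)]=0$, yielding $e^{-it\tilde{X}_{\varphi}}z=e^{t(\varphi-\beta(X_{\varphi}))}z$, whereas you transfer it from the already-proved Lemma \ref{lem4-2-1} through the tilde-correspondence; both are sound.
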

\begin{proof}
As $L$ is a $T^{n}$-invariant holomorphic line bundle, 
we take the local $T^{n}$-invariant $\bar{\partial}_{L,0}$-holomorphic trivialization on a $T^{n}$-invariant open subset $U$.
Let $(w_{1}, \cdots, w_{m}, z)$ be the corresponding holomorphic coordinates of $\pi^{-1}(U)$, where $(w_{1}, \cdots, w_{m})$ is base coordinate and $z$ is fiber coordinate. 
We will first show that $e^{-it\tilde{X}_{\varphi}}$ can be applied to $z$ and $w_{l}$, for $l=1, \cdots, m$.
the horizontal lifting $X_{\varphi}^{H}$ of $X_{\varphi}$ on $\pi^{-1}(U)$ can be expressed as
\begin{equation}
(X_{\varphi}^{H})_{w,z}
=(X_{\varphi}, \beta (X_{\varphi}) 2\mathrm{Re}( z\frac{\partial}{\partial z})).
\end{equation}
Then we have 
\begin{equation}
e^{-itX_{\varphi}^{H}}z= e^{-t(\beta(X_{\varphi}))}z.
\end{equation}
 As $\eta(\varphi)$ on $\pi^{-1}(U)$ can be expressed as 
 $\eta(\varphi)_{(w,z)}=(0, \varphi 2\mathrm{Re}( z\frac{\partial}{\partial z}))$, we have $e^{-it\eta(\varphi)}z=e^{t\varphi}z$.
Note that $\tilde{X}_{\varphi}=X_{\varphi}^{H}+ \eta(\varphi)$ and $[X_{\varphi}^{H}, \eta(\varphi)]=0$.
 Therefore $e^{-it\tilde{X}_{\varphi}}$ can be applied to $z$ and
\begin{equation}\label{eq3-0-31}
e^{-it\tilde{X}_{\varphi}}z= e^{t(\varphi-\beta( X_{\varphi}))}z=: z^{t}.
\end{equation}
Since $(w_{1}, \cdots, w_{m})$ is $\bar{\partial}_{0}$-holomorphic coordinate on U, by \cite[Theorem 1.2]{LW1} and Remark \ref{rk2-0-11}, $e^{-itX_{\varphi}}$ can be applied to $w_{j}$ and $e^{-itX_{\varphi}}=\psi_{t}^{*}w_{j}$.
Observe that $\tilde{X}_{\varphi}w_{j}=X_{\varphi}w_{j}$. Then one has 
\begin{equation}
e^{-it\tilde{X}_{\varphi}}w_{j}=\psi_{t}^{*}w_{j}=:w_{j}^{t}, ~\text{for}~ j=1, \cdots, m.
\end{equation} 
We define $\tilde{\psi}_{t}= (w^{-1}\circ w^{t}, z^{-1} \circ z^{t}): \pi^{-1}(\psi_{t}^{-1}U) \rightarrow \pi^{-1}(U)$ with $w=(w_{1},\cdots, w_{m})$.

Next we show that $\tilde{\psi}_{t}$ is well defined on $L$. Let $U$ and $V$ be two $T^{n}$-invariant open subsets of $M$ such that $U \cap V \neq \emptyset$. Taking the holomorphic coordinates $(w_{U},z_{U})$ and $(w_{V},z_{V})$ on $\pi^{-1}(U)$ and $\pi^{-1}(V)$ respectively, we have 
$$w_{U}=f_{UV}\circ w_{V}, z_{U} = (g_{UV}\circ w_{V}) \cdot  z_{V},  \text{with}~  (w_{U}, z_{U}):\pi^{-1}(U)\rightarrow \CC^{m} \times \CC, $$
where $(f_{UV},g_{UV})$ is $\bar{\partial}_{0}$-holomorphic  transition function with $g_{UV}$ being $T^{n}$-invariant. By \cite[Theorem 1.2]{LW1}, we have 
\begin{equation}
w^{t}_{U}=f_{UV}\circ w^{t}_{V}, \text{and}~ z^{t}_{U} = (g_{UV}\circ w^{t}_{V}) \cdot  z^{t}_{V},
\end{equation}
with $w^{t}_{U}=e^{-it\tilde{X}_{\varphi}}w_{U}$ (applied to each coordinate of $w_{U}$), $z^{t}_{U}=e^{-it\tilde{X}_{\varphi}}z_{U}$, and similarly $w^{t}_{V}, z^{t}_{V}$.
 This implies that $\{\pi^{-1}(\psi_{t}^{-1}(U)), (w_{U}^{t},z_{U}^{t})\}$'s define a new holomorphic structure which coincides with $\bar{\partial}_{L,t}$. 
Therefore we have a bundle map $$\tilde{\psi}_{t}: (L, \bar{\partial}_{L,t}) \rightarrow (L, \bar{\partial}_{L,0}),$$
which is a lifting of $\psi_{t}$, for each $t\ge 0$.
In particular, the map $\tilde{\psi_{t}}: L_{p} \rightarrow L_{\psi_{t}(p)}$ is a linear isomorphism.
By the universal property of $\psi_{t}^{*}(L,\bar{\partial}_{L,0})$,
$\tilde{\psi}_{t}$ gives rise to a bundle isomorphism $\psi_{t}^{*}(L,\bar{\partial}_{L,0}) \cong (L, \bar{\partial}_{L,t})$. Let
$\Psi_{t}: \Gamma(M,L) \rightarrow \Gamma(M, \psi_{t}^{*}L)$ be the induced isomorphism. 
To show $e^{t\hat{\varphi}}s_{0}=\Psi_{t}^{-1}(\psi_{t}^{*}s_{0})$, it is equivalent to show:  
$$\tilde{\psi}_{t}^{*}\tilde{s}_{0}=\widetilde{e^{t\hat{\varphi}}s_{0}} \in C^{\infty}(L^{\times}).
$$
Recall that for any smooth section $s$ of $L$, there is a complex-valued function $\tilde{s}$ defined by 
$\tilde{s}(x)=\frac{s(\pi (x))}{x}$. It easy to see that $c \cdot \tilde{s}=c \tilde{s}$ for all $c \in \CC^{\times}$.
By Lemma \ref{lem3-0-12},  one has 
\begin{equation}\label{eq4-0-6} -i\tilde{X}_{\varphi} \tilde{s_{0}}=\widetilde{\hat{\varphi}s_{0}},\end{equation}
 where $\hat{\varphi}s_{0} \in \Gamma(M,L)$ is given by 
$
\hat{\varphi}s_{0}=-i\nabla_{X_{\varphi}} s_{0} + \varphi  s_{0}.
$
Therefore, for any $N \in \NN$, by equation (\ref{eq4-0-6})
\begin{equation}
\sum_{j=1}^{N} \frac{1}{j!}(-i)^{j}\tilde{X}^{j}_{\varphi} \tilde{s_{0}}=\sum_{j=1}^{N} \frac{1}{j!}\widetilde{\hat{\varphi}^{j}s_{0}}.
\end{equation}
By Theorem \ref{thm7}, 
$e^{t\hat{\varphi}}s_{0}=\sum_{j=1}^{\infty} \frac{t^{j}}{j!}\hat{\varphi}^{j}s_{0}$.
This implies
\begin{equation}
\tilde{\psi}_{t}^{*}\tilde{s}_{0}=e^{-it\tilde{X}_{\varphi}}\tilde{s_{0}}=\sum_{j=1}^{\infty} \frac{t^{j}}{j!}(-i)^{j}\tilde{X}^{j}_{\varphi} \tilde{s_{0}}=\sum_{j=1}^{\infty} \frac{t^{j}}{j!}\widetilde{\hat{\varphi}^{j}s_{0}}=\widetilde{e^{t\hat{\varphi}}s_{0}}.
\end{equation}
\end{proof}

\subsection{Convergence of quantum spaces}

For any $\lambda \in \fot^{*} \cap \Im \mu$, denote $\shH_{t,\lambda}$ the space of weight-$\lambda$ holomorphic sections with respect to $\bar{\partial}_{t}$; that is:
 $$\shH_{t,\lambda}= \{ s \in \shH_{t}(M,L) \mid \xi s=\lambda(\xi)s, \forall \xi \in \fot \}, t > 0.$$
 Then, for any $t \ge 0$:
 $$ \shH_{t} = \bigoplus_{\lambda\in \fot^{*}_{\ZZ} \cap \Im \mu} \shH_{t,\lambda}$$
 is {\em the weight space decomposition} of $\shH_{t}$.
 
\begin{definition}\label{def5-3}
For any $s \in \shH_{0,\lambda}$, we define {\em the associated distributional section $\delta_{s} \in \Gamma(M,L^{-1})'$} by: for any $\tau \in \Gamma(M,L^{-1})$,
\begin{equation}\label{eq5-0-1}
\delta_{s}(\tau)=\int_{M^{\lambda}}\langle s|_{M^{\lambda}},\tau|_{M^{\lambda}}\rangle \vol^{\lambda}.
\end{equation}
\end{definition}

\begin{theorem}\label{thm9}
Under the assumption $(*)$, for any $\lambda \in \fot_{\ZZ,\mathrm{reg}}^{*}$, and any holomorphic section $s_{0} \in \shH_{0,\lambda}$, 
 the family of sections $\{{C_{t}}s_{t}\}$, under 
$ \iota: \Gamma(M, L)  \rightarrow \Gamma_{c}(M,L^{-1})',$ 
weakly converges to $\delta_{s_{0}} \in \shH_{\mathrm{mix}, \lambda}$, as $t$ goes to $\infty$, 
i.e.
\begin{equation}
\lim_{t\rightarrow \infty}\iota({C_{t}}s_{t})=\delta_{s_{0}},
\end{equation}
where $s_{t}=e^{t\hat{\varphi}}s_{0}$ and $\delta_{s_{0}}$ is the distributional section associated to $s_{0}$. Here, for each $t \ge 0$, $C_{t}$ is a constant defined by $
 C_{t}= ||e^{t(\varphi -\sum_{j}(\mu_{j}-\lambda_{j})\frac{\partial \phi}{\partial \mu_{j}})}||^{-1}_{L_{1}}.
 $
\end{theorem}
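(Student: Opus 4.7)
The plan is to combine an explicit formula for $s_t$, deduced from the weight condition on $s_0$, with a Laplace-type asymptotic on $\fot^*$ obtained by pushing forward via the moment map and applying the concentration lemma of Baier--Florentino--Mour\~ao--Nunes stated above.

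First I would compute $s_t=e^{t\hat\varphi}s_0$ explicitly. Fix a basis $\xi_1,\ldots,\xi_n$ of $\fot$ with fundamental fields $\xi_j^\#$ and moment components $\mu_j$. The weight condition on $s_0\in\shH_{0,\lambda}$, translated through the infinitesimal $T^n$-action $\xi\cdot=\nabla_{\xi^\#}+i\mu^\xi$, gives $\nabla_{\xi_j^\#}s_0=i(\lambda_j-\mu_j)s_0$. Writing $X_\varphi=\sum_j\frac{\partial\phi}{\partial\mu_j}\xi_j^\#$ with each coefficient $T^n$-invariant, this yields
$$
\hat\varphi\,s_0\;=\;-i\nabla_{X_\varphi}s_0+\varphi s_0\;=\;\Bigl[\varphi-\sum_j(\mu_j-\lambda_j)\tfrac{\partial\phi}{\partial\mu_j}\Bigr]s_0\;=\;-f_\lambda(\mu)\,s_0,
$$
where $f_\lambda$ is the function appearing in the cited BFMN lemma. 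Since the multiplier depends only on $\mu$ and is therefore $X_\varphi$-invariant, iteration gives $\hat\varphi^k s_0=(-f_\lambda(\mu))^k s_0$, so $s_t=e^{-t f_\lambda(\mu)}s_0$.

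Next I would convert $\iota(C_t s_t)(\tau)=C_t\int_M e^{-tf_\lambda(\mu)}\langle s_0,\tau\rangle\vol_M$ into a Laplace integral on $\fot^*$. Decomposing $\tau\in\Gamma_c(M,L^{-1})$ into $T^n$-weight pieces, only its weight $(-\lambda)$ component $\tau^{-\lambda}$ survives pairing with $s_0$ against the $T^n$-invariant measure $e^{-tf_\lambda(\mu)}\vol_M$, and $\langle s_0,\tau^{-\lambda}\rangle$ is then $T^n$-invariant. Since $\lambda\in\fot^*_{\ZZ,\mathrm{reg}}$, $\mu$ is a principal $T^n$-fibration in a neighborhood of $M^\lambda$, and the Darboux--Weinstein local model supplies the coarea decomposition
$$
\int_M e^{-tf_\lambda(\mu)}\langle s_0,\tau^{-\lambda}\rangle\vol_M\;=\;\int_{\fot^*}e^{-tf_\lambda(x)}\,G(x)\,dx,\qquad G(x):=\int_{M^x}\langle s_0,\tau^{-\lambda}\rangle\vol^x,
$$
with $G$ continuous near $\lambda$ and $G(\lambda)=\delta_{s_0}(\tau^{-\lambda})=\delta_{s_0}(\tau)$ (the last equality is the weight orthogonality of $\delta_{s_0}$, since $\delta_{s_0}\in\shH_{\mathrm{mix},\lambda}$ pairs nontrivially only with the weight $(-\lambda)$ component of $\tau$).

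Finally, interpreting $\|\cdot\|_{L_1}$ in the definition of $C_t$ as $L_1(\fot^*,dx)$ (the reading consistent with the BFMN lemma, for which $-f_\lambda\circ\mu$ descends to the moment polytope), one has $C_t^{-1}=\int_{\fot^*}e^{-tf_\lambda(x)}dx$, so
$$
\iota(C_t s_t)(\tau)\;=\;\frac{\int_{\fot^*}e^{-tf_\lambda(x)}G(x)\,dx}{\int_{\fot^*}e^{-tf_\lambda(x)}\,dx}\;\xrightarrow{t\to\infty}\;G(\lambda)\;=\;\delta_{s_0}(\tau),
$$
by the BFMN concentration lemma and continuity of $G$ at the interior minimizer $\lambda$. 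The main obstacle I expect is the fiber-integration step: one must carefully match the volume form $\vol^\lambda=\tfrac{(m-n)!}{m!}\pi^*\vol_\lambda\wedge\alpha^n$ with the fiber measure produced by decomposing $\vol_M$ in a Darboux--Weinstein chart around $M^\lambda$, and verify that contributions to the Laplace integral from the singular locus $M\setminus\check M$ are asymptotically negligible (which follows from $f_\lambda\circ\mu$ attaining its global minimum only along $M^\lambda\subset\check M$, together with standard Laplace tail estimates).
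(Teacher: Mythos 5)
Your proposal is correct and follows essentially the same route as the paper's proof: the explicit multiplier formula $s_{t}=e^{-tf_{\lambda}(\mu)}s_{0}$, the concentration lemma of Baier--Florentino--Mour\~{a}o--Nunes, and a normal form for $\omega$ near the level set $M^{\lambda}$ (the paper invokes Guillemin's coisotropic embedding theorem, which is the Darboux--Weinstein model you describe) to split the integral into a tail that the concentration lemma kills and a neighbourhood of $M^{\lambda}$ where fibre integration produces $\delta_{s_{0}}(\tau)$. The one genuine difference is at the start: the paper reduces to $\lambda=0$, $n=1$ by the shifting trick and then applies Lemma \ref{lem4-2-1}(3) to the now $T^{n}$-invariant section $s_{0}$, whereas you derive $\hat{\varphi}s_{0}=-f_{\lambda}(\mu)s_{0}$ for arbitrary $\lambda$ directly from the weight identity $\nabla_{\xi_{j}^{\#}}s_{0}=i(\lambda_{j}-\mu_{j})s_{0}$ and the $X_{\varphi}$-invariance of functions of $\mu$; this is cleaner and avoids the reduction entirely. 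Your weight decomposition of the test section $\tau$ is harmless but not needed, since the $T^{n}$-averaging is performed automatically by the fibre integral over $M^{\lambda}$. The two technical points you flag --- matching $\vol^{\lambda}$ with the fibre measure obtained by disintegrating $\vol_{M}$ (they differ by a continuous Jacobian equal to $1$ along $M^{\lambda}$, coming from $(i^{*}\omega+\nu\, d\alpha)^{m-n}$ versus $(i^{*}\omega)^{m-n}$), and discarding the contribution of $M\setminus U_{\epsilon}$ --- are exactly the points the paper handles with the explicit formula $\vol_{M}|_{U_{\epsilon}}=\frac{1}{m!}(i^{*}\omega+\nu\, d\alpha)^{m-1}\wedge\alpha\wedge d\nu$ and the splitting of the integral, and your reading of $C_{t}$ as an $L_{1}$-norm over $\fot^{*}$ with Lebesgue measure is the one the paper's final computation implicitly uses; so there is no gap.
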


 \begin{proof} By \cite[Proposition 3.7]{LW2}, $\delta_{s_{0}} \in \shH_{\mathrm{mix},\lambda}$. Without loss of generality, we assume $\lambda=0 \in  \fot_{\ZZ,\mathrm{reg}}^{*}$ and $n=1$; the general case follows from this by dint of the ``shifting trick" (see \cite{Gui1, GS1, SL}). 
 Note that for any $s_{0} \in \shH_{0,0}$, $s_{0}$ is then a $T^{n}$-invariant $\bar{\partial}_{0}$-holomorphic section of $L$.
By Theorem \ref{thm7}, we have that 
\begin{equation}\label{eq3-0-37}
e^{t\hat{\varphi}}s_{0}=e^{t(\varphi - \mu\frac{\partial \phi}{\partial \mu})}s_{0}.
\end{equation}

According to Guillemin's coisotropic embedding theorem (see \cite[Theorem 2.2]{Gui1}, \cite[Appendix 4.3]{LW2}), in a neighbourhood $U_{\epsilon}$ of $M^{0}=\mu^{-1}(0)$, the Hamiltonian $T^{1}$-spaces $(M,\omega)$ and $(M^{0} \times \fot^{*}, \tilde{\omega})$ are isomorphic, where $\tilde{\omega}=i^{*}\omega+d(\nu \alpha)$ with $\nu$ being the coordinate function of $\fot^{*}$ and $\alpha$ being a principal $T^{1}$-connection on $M^{0}$. Here $T^{1}$ only acts on the first factor $M^{0} \times \fot^{*}$.
In particular,
\begin{equation}
\vol_{M}|_{U_{\epsilon}}= \frac{1}{m!}(i^{*}\omega+\nu d\alpha)^{m-1} \wedge \alpha \wedge d\nu.
 \end{equation}
Then, for any $\tau \in \Gamma_{c}(M, L^{-1})$, we have:
\begin{align*}
\iota({C_{t}}s_{t})(\tau)
=\int_{M} \langle {C_{t}}s_{t}, \tau \rangle \vol_{M} &= \int_{U_{\epsilon}} \langle {C_{t}}s_{t}, \tau \rangle \vol_{M}|_{U_{\epsilon}} + \int_{M \setminus U_{\epsilon}} \langle {C_{t}}s_{t}, \tau \rangle \vol_{M}|_{M\setminus U_{\epsilon}}.
\end{align*}
We first show that 
$$\lim_{t \rightarrow \infty}\int_{M \setminus U_{\epsilon}} \langle {C_{t}}s_{t}, \tau \rangle \vol_{M}|_{M\setminus U_{\epsilon}}=0. $$

According to (\ref{eq3-0-37}), we have 
 \begin{equation}\label{eq3-0-39}
   \int_{M} \langle C_{t}s_{t}, \tau \rangle \vol_{M}
 = \int_{M} \frac{e^{t(\varphi - \mu\frac{\partial \phi}{\partial \mu})}}{||e^{t(\varphi - \mu\frac{\partial \phi}{\partial \mu})}||_{L_{1}}}  \langle s_{0},\tau \rangle \vol_{M}.
\end{equation}
Since $M$ is compact, $|\langle s_{0},\tau \rangle|$ is bounded on $M$. 
As $\mu^{-1}(0) \subset U_{\epsilon}$, there exists a small $0<\epsilon \in \RR$, such that $|\mu|\ge \epsilon >0$ on $M \setminus U_{\epsilon}$. Therefore, according to the work (\cite[Lemma 3.7]{BFMN}) of Baier, Florentino, Mour\~{a}o and Nunes,
we have:
\begin{align*}
\lim_{t \rightarrow \infty}\int_{M \setminus U_{\epsilon}} \langle {C_{t}}s_{t}, \tau \rangle \vol_{M}|_{M\setminus U_{\epsilon}}
&=\lim_{t \rightarrow \infty}\int_{M \setminus U_{\epsilon}} \frac{e^{t(\varphi - \mu\frac{\partial \phi}{\partial \mu})}}{||e^{t(\varphi - \mu\frac{\partial \phi}{\partial \mu})}||_{L_{1}}}  \langle s_{0},\tau \rangle \vol_{M}|_{M \setminus U_{\epsilon}}=0.
\end{align*}
On the other hand, without loss of generality, we assume $U \cong M^{0} \times(-\epsilon,\epsilon)$. Then $\mu:M^{0} \times(-\epsilon,\epsilon) \rightarrow(-\epsilon,\epsilon)$ is a fibration. Using \cite[Lemma 3.7]{BFMN}, and the relation between $(\vol_{M})|_{U_{\epsilon}}$ and $\vol^{0}$, we have:
\begin{align*}
\lim_{t \rightarrow \infty}\int_{U_{\epsilon}} \langle {C_{t}}s_{t}, \tau \rangle \vol_{M}|_{U_{\epsilon}}
&=\lim_{t \rightarrow \infty}\int_{U_{\epsilon}} \frac{e^{t(\varphi - \mu\frac{\partial \phi}{\partial \mu})}}{||e^{t(\varphi - \mu\frac{\partial \phi}{\partial \mu})}||_{L_{1}}}  \langle s_{0},\tau \rangle  \vol_{M}|_{U_{\epsilon}}\\
&=\lim_{t \rightarrow \infty}\int_{U_{\epsilon}} \frac{e^{t(\varphi - \nu\frac{\partial \phi}{\partial \nu})}}{||e^{t(\varphi - \nu\frac{\partial \phi}{\partial \nu})}||_{L_{1}}}  \langle s_{0},\tau \rangle   \frac{1}{m!}(i^{*}\omega+\nu d\alpha)^{m-1} \wedge \alpha \wedge d\nu\\
&=\lim_{t \rightarrow \infty}\int_{-\epsilon}^{\epsilon} \frac{e^{t(\varphi - \nu\frac{\partial \phi}{\partial \nu})}}{||e^{t(\varphi - \nu\frac{\partial \phi}{\partial \nu})}||_{L_{1}}} \int_{v=a} \langle s_{0},\tau \rangle   \frac{1}{m!}(i^{*}\omega+\nu d\alpha)^{m-1} \wedge \alpha \wedge d\nu\\
&=\int_{\nu=0}\langle s_{0},\tau \rangle |_{\nu=0}  \frac{1}{m!}(i^{*}\omega)^{m-1} \wedge \alpha\\
&=\int_{M^{0}}\langle s_{0}|_{M^{0}},\tau|_{M^0} \rangle \vol^{0}=\delta_{s_{0}}(\tau).
\end{align*}
\end{proof}

\newpage
\phantomsection
\vspace{20pt}

\section{Appendix}

\subsection{Polarizations on symplectic manifolds}
A step in the process of geometric quantization is to choose a polarization. We first recall the definitions of smooth distribution and polarization on symplectic manifolds $(M,\omega)$ (See \cite{Wo}).

\begin{definition} \label{def4-1}A {\em complex distribution} $\shP$ on a manifold $M$ is a sub-bundle of the complexified tangent bundle $TM\otimes \CC$, such that for every $x \in M$, the fiber $\shP_{x}$ is a subspace of $T_{x}M\otimes \CC $.
\end{definition}

We denote by $\Gamma(M,\shP)$ the space of all vector fields on $M$ that are tangent to $\shP$. Here a vector field $u$ of $M$ is tangent to $\shP$ if for every $x \in M$, $u_{x} \in \shP_{x}$. Then we can define complex polarizations on symplectic manifolds. 

\begin{definition}\label{def4-2} A {\em complex polarization} $\shP$ of a symplectic manifold $(M,\omega)$ is a complex distribution $\shP \subset TM\otimes \CC$ satisfying the following conditions:
\begin{enumerate} [label = (\alph*)]
\item $\shP$ is involutive, i.e. if $u,v \in \Gamma(M,\shP)$, then $ [u,v] \in \Gamma(M,\shP)$;
\item for every $x \in M$, $\shP_{x} \subseteq T_{x}M \otimes {\CC}$ is Lagrangian; and
\item $\rank(\shP \cap \overline{\shP} \cap TM)$ is constant.
\end{enumerate}
\end{definition}

 \begin{remark} Let $\shP$ be a complex polarization on a symplectic manifold $(M, \omega)$, then $\shP$ is called:
 \begin{enumerate}
 \item {\em real polarization}, if $\shP = \overline{\shP}$;
 \item {\em K\"ahler polarization}, if $\shP \cap \overline{\shP} = 0$;
  \item {\em mixed polarization}, if $0 < \rank(\shP \cap \overline{\shP} \cap TM) < n$.
 \end{enumerate}
 \end{remark}

 It's easy to see that there exist K\"ahler polarization and singular real polarization defined by moment map on toric varieties. To construct a singular polarization $\shP_{\mathrm{mix}}$ on K\"ahler manifolds, we introduce the definitions of singular polarizations and smooth section of singular polarizations as follows. 
\begin{definition}\label{def4-3}
 $\shP \subset  TM \otimes\CC$ is a {\em singular complex distribution} on $M$ if it satisfies:
$\shP_{p} $ is a vector subspace of $ T_{p}M \otimes \CC$, for all points $p \in M$.
\end{definition}

\begin{definition}\label{def4-4}
Let $\shP$ be a singular complex distribution of $TM\otimes \CC$. For any open subset $U$ of $M$, {\em the space of smooth sections of $\shP$ on $U$} is defined by the smooth section of $TM\otimes \CC$ with value in $\shP$, that is,
$$ \Gamma(U, \shP) = \{ v \in \Gamma(U, TM \otimes \CC) \mid  v_{p} \in (\shP)_{p}, \forall  p\in U \}.$$
In particular, 
$$\Gamma(M, \shP) = \{ v \in \Gamma(M, TM \otimes \CC) \mid  v_{p} \in \shP_{p},  \forall  p\in M \}.$$
\end{definition}

\begin{remark}
In this paper, we only consider such distributions with mild singularities in the sense that they are only singular outside an open dense subset $\check{M}\subset M$. Under our setting, we define the {\em involutive distribution}  as follows
\end{remark}

\begin{definition}\label{def4-5}
Let $\shP$ be a singular complex distribution on $M$. $\shP$ is {\em involutive} if it satisfies,
$$[u,v] \in \Gamma(M,\shP), \mathrm{~for~ any~} u,v \in \Gamma(M, \shP)$$
\end{definition}

\begin{definition}\label{def4-6}
Let $\shP \subset TM \otimes\CC$ be a singular complex distribution on $M$. $\shP$ is called {\em smooth on $\check{M}$} if 
 $\shP|_{\check{M}}$ is a smooth sub-bundle of the tangent bundle $T\check{M} \otimes \CC$.
\end{definition}

\begin{definition}\label{def4-7} Let $\shP$ be a singular complex distribution $\shP$ on $M$ and smooth on $\check{M}$. $\shP$ is called a {\em singular polarization on $M$ and smooth on $\check{M}$}, if it satisfies the following conditions:
\begin{enumerate} [label = (\alph*)]
\item $\shP$ is involutive, i.e. if $u,v \in \Gamma(M,\shP)$, then $ [u,v] \in \Gamma(M,\shP)$;
\item for every $p \in \check{M}$, $\shP_{p} \subseteq T_{p}M \otimes \CC$ is Lagrangian; and
\item  $\rank(\shP \cap \overline{\shP} \cap TM)|_{\check{M}}$ is a constant.
\end{enumerate}
\end{definition}

\begin{definition} \label{def4-8}
 If $\shP$ is a singular polarization on $M$ and smooth on $\check{M}$, and $\shP|_{\check{M}}$ is a smooth polarization on $\check{M}$, then $\shP$ is called 
 \begin{enumerate}
 \item a {\em singular real polarization}, if $\shP = \overline{\shP}$ on $\check{M}$;
 \item a {\em singular mixed polarization}, if $0 < \rank(\shP \cap \overline{\shP} \cap TM)|_{\check{M}} < n$.
 \end{enumerate}
\end{definition}

\newpage
\subsection{List of notations}

\begin{center}
\begin{tabular}{lll}
  $(L,\nabla,h)$ && pre-quantum line bundle\\
$L^{\times}$ && the complement of zero section in $L$\\ 
$\mu: M \rightarrow \fot^{*}$ &&moment map\\
$\bar{\partial}_{L,t}=\nabla_{t}^{0,1}$ && $\bar{\partial}$ operator on $L$ with respect to $J_{t}$\\
$\fot^{*}_{\ZZ,\mathrm{reg}}$&& the set of integral regular values of $\mu$\\
$M^{\lambda}=\mu^{-1}(\lambda)$ && level set\\
$\Gamma_{c}(M,L^{-1})'$&& space of distributional sections of $L$\\
 $M_{\lambda}=M^{\lambda}/T^{n}$&& reduced space for $\lambda \in \fot^{*} \cap \Im \mu$\\
 $v_{\lambda}$ && symplectic 2 form on $M_{\lambda}$ such that $i^{*} \omega= \pi^{*}v_{\lambda}$\\
 $\vol_{\lambda}= \frac{1}{(m-n)!} v_{\lambda}^{m-n}$ &&volume form on $M_{\lambda}$\\
 $\vol_{M} =\frac{1}{n!}\omega^{n}$&& volume form on $M$\\
 $\alpha$&&$T^{n}$-invariant connection one-form on $M^{\lambda}$\\
 $\vol^{\lambda}=\frac{(m-n)!}{m!}\pi^{*}\vol_{\lambda}\wedge \alpha^{n}$ &&volume form on $M^{\lambda}$\\
$df=\omega(-, X_{f})$ &&  $df=- i_{X_{f}} \omega$\\
$\{f,g\}=(X_{f})g= \omega(X_{f},X_{g})$ && Poisson bracket $\{\}$ on $C^{\infty}(M,\RR)$\\
$\phi:\fot^{*} \rightarrow \RR$ && strictly convex function on $\fot^{*}$ \\
$\varphi=\phi \circ \mu$&& the composition of $\phi$ with $\mu$\\
$X_{\varphi}$ &&  Hamiltonian vector field associated to $\varphi$\\
$X_{\varphi}^{H}$ && horizontal lifting of $X_{\varphi}$ \\
$\eta(\varphi)$ && vertical vector field associated to $\varphi$\\
$\hat{\varphi}$ && quantum operator associated to $\varphi$\\
$\shP_{t}$ &&  K\"ahler polarization associated to complex structure $J_{t}$\\
$\shH_{t}$ && quantum space associated to $\shP_{t}$\\
$\shP_{\mathrm{mix}}=(\shP_{J} \cap\shD_{\CC} ) \oplus \shI_{\CC}$ && (singular) mixed polarization on $M$\\
$\shH_{\mathrm{mix}}$ && quantum space associated to $\shP_{\mathrm{mix}}$\\
$\shH_{t,\lambda}$ && weigh-$\lambda$ subspace of $\shH_{t}$ with respect to $T^{n}$-action\\
$\shH_{\mathrm{mix},\lambda}$ && weigh-$\lambda$ subspace of $\shH_{\mathrm{mix}}$ with respect to $T^{n}$-action\\
$\psi_{t}: (M,J_{t}) \rightarrow (M,J_{0})$ && diffeomorphism given by imaginary time flow $e^{-itX_{\varphi}}$\\
$\tilde{\psi}_{t}:(L, \bar{\partial}_{L,t}) \rightarrow (L, \bar{\partial}_{L,0})$ && lifting of $\psi_{t}$\\
$\beta$ && local potential s.t. $d\beta=\omega$\\
$\rho_{t}$ && K\"ahler potential of $\omega$ with respect to $J_{t}$
\end{tabular}
\end{center}

\bibliographystyle{amsplain}

\begin{thebibliography}{99}

\addcontentsline{toc}{chapter}{Bibliography}  
\bibitem{An1}
J. E. Andersen,
{\em Jones-Witten theory and the Thurston boundary of Teichm\"uller spaces},
University of Oxford D. Phil. thesis, 1992.

\bibitem{An2}
J. E. Andersen,
{\em  Geometric quantization of symplectic manifolds with respect to reducible non-negative polarizations},
Comm. Math. Phys. 183 (2) (1997), 401-421.


\bibitem{An3}
J. E. Andersen,
{\em New polarizations on the moduli spaces and the Thurston compactification of Teichmüller space},
 International Journal of Mathematics, 9 (1) (1998), 1-45.


\bibitem{An4}
J. E. Andersen,
{\em Asymptotic faithfulness of the quantum SU(n) representations of the mapping class groups},
 Annals of Mathematics, 163 (1) (2006), 347-368.


\bibitem{An5}
J. E. Andersen,
{\em Hitchin's connection, Toeplitz operators and symmetry invariant deformation quantization},
 Quantum Topology 3 (3-4) (2012), 293-325.
 
 

\bibitem{BFHMN}
T. Baier, A.C. Ferreira, J. Hilgert, J. M. Mour\~{a}o and J. P. Nunes,
{\em Fibering polarizations and Mabuchi rays on symmetric spaces of compact type}, arXiv preprint arXiv:2404.19697.

\bibitem{BFMN}
T. Baier, C. Florentino, J. M. Mour\~{a}o and J. P. Nunes,
{\em Toric K\"ahler metrics seen from infinity, quantization and compact tropical amoebas}, J. Diff. Geom., 89 (3), 411-454,  2011.


\bibitem{BHKMN}
T. Baier, J. Hilgert, O. Kaya, J. M. Mourão, and J. P. Nunes (2023).
{\em Quantization in fibering polarizations, Mabuchi rays and geometric Peter--Weyl theorem}, arXiv preprint arXiv:2301.10853.

\bibitem{BBLU}
R. Bhattacharyya, D. Burns, E. Lupercio and A. Uribe, 
{\em The exponential map of the complexification of the group of analytic Hamiltonian diffeomorphisms}, 
Pure and Applied Mathematics quarterly 18, no. 1(2022):33-70.


\bibitem{BG}
D. Burns and V. Guillemin,
{\em Potential functions and actions of tori on K\"ahler manifolds}, Commun. Anal. Geom. 12 (2004) 281-303.

\bibitem{CLL}
 K. Chan, N. C. Leung, and Q. Li,
 {\em Quantization of K\"ahler manifolds}, Journal of Geometry and Physics 163 (2021): 104143.

\bibitem{CS}
K. Chan and Y. H. Suen,
{\em Geometric quantization via SYZ transforms}, Adv. Theor. Math. Phys., Vol. 24, No.1 (2020), pp. 25-66.

\bibitem{FMMN1}
C. Florentino, P. Matias, J. M. Mour\~{a}o and J. P. Nunes,
{\em Geometric quantization, complex structures and the coherent state transform}, J. Funct. Anal., 221 (2005) 303-322.

\bibitem{FMMN2}
C. Florentino, P. Matias, J. M. Mour\~{a}o and J. P. Nunes,
{\em On the BKS pairing for K\"ahler quantizations of the cotangent bundle of a Lie group}, J. Funct. Anal., 234 (2006) 180-198.

\bibitem{Gr}
W. Gr\"obner,
{\em General theory of the Lie series, in Contributions to the method of Lie series}, 
W. Gr\"obner and H. Knapp Eds., Bibliographisches Institut Manheim, 1967.

\bibitem{Gui1}
V. Guillemin,
{\em Moment maps and combinatorial invariants of Hamiltonian $T^{n}$- spaces}, Progress in Math., 122, Birkh$\ddot{\rm{a}}$user, 1994.

\bibitem{GS1}
V. Guillemin and S. Sternberg,
{\em Geometric Quantization and Multiplicities of Group Representations}, Inventiones Mathematicae, 67.3 (1982): 515-538.

\bibitem{GS3}
V. Guillemin and S. Sternberg,
{\em The Gelfand-Cetlin system and quantization of the complex flag manifolds}, J. Funct. Anal., 52 (1983), 106-128. 

\bibitem{Hal}
B. C. Hall,
{\em Geometric quantization and the generalized Segal-Bargmann transform for Lie group of compact type}, Comm. Math. Phys. 226 (2002) 233-268.

\bibitem{HK1}
B. C. Hall and W. D. Kirwin,
{\em Adapted complex structures and the geodesic flow}, Math. Ann.,  350.2, 455-474, 2011.

\bibitem{HK2}
B. C. Hall and W. D. Kirwin,
{\em Complex structures adapted to magnetic flows}, J. Geom. Phys., 2015, 90, 111-131.

\bibitem{HHK}
M. D. Hamilton, M. Harada, and K. Kaveh,
{\em Convergence of polarizations, toric degenerations, and Newton–Okounkov bodies}, 
Communications in Analysis and Geometry 29.5 (2021): 1183-1231.

\bibitem{HK}
M. D. Hamilton and H. Konno,
{\em Convergence of K\"ahler to real polarization on flag manifolds via toric degenerations}, J. Sympl. Geo., 12 (3), 473-509, 2014.

\bibitem{HK3}
M. Harada and K. Kaveh,
{\em Integrable systems, toric degenerations and Okounkov bodies}, Inventiones mathematicae, 202.3 (2015): 927-985.

\bibitem{Hi}
N. J. Hitchin,
{\em Flat connections and geometric quantization}, Comm. Math. Phys., 131 (1990) 347-380.

\bibitem{KMN1}
W.D. Kirwin, J.M. Mour\~{a}o, J.P. Nunes
{\em Degeneration of K\"ahler structures and half-form quantizaiton of toric varieties.} J. Sympl. Geo 11, 603-643(2013).

\bibitem{KMN2}
W. D. Kirwin, J. M. Mour\~{a}o and J. P. Nunes,
{\em Complex time evolution in geometric quantization and generalized coherent state transforms}, J. Funct. Anal., 265, 1460-1493 (2013).

\bibitem{KMN4}
 W. D. Kirwin, J. M. Mour\~{a}o and J. P. Nunes,
{\em Complex symplectomorphisms and pseudo-K\"ahler islands in the quantization of toric manifolds}, Math. Ann., (2016) 364 (1-2): 1-28.
  
 \bibitem{KW}
W.D. Kirwin and S. Wu,
{\em Geometric quantization, parallel transport and the Fourier transform}, Comm. Math. Phys., 266 (2006) 577-594.

\bibitem{Kos}
B. Kostant,
{\em Quantization and unitary representations}, In: Modern analysis and applications. Lecture Notes in Math., Vol. 170, pp. 87-207. Berlin-Heidelberg-Mew York: Springer 1970.

 
\bibitem{LW1}
 N.C. Leung and D. Wang
 {\em Geodesic rays in the space of K\"ahler metrics with T-symmetry}, Adv. Math. 450 (2024): 109756.
 
\bibitem{LW2}
 N.C. Leung and D. Wang
 {\em Geometric quantizations of mixed polarizations on K\"ahler manifolds with $T$-symmetry}, arXiv: 2301.01011.
 
 \bibitem{LY1}
 N. C. Leung and Y. Yau,
 {\em Deformation quantization via Toeplitz operators on geometric quantization in real polarizations}, Comm. Math. Phys. (2022): 1-26.
 
  \bibitem{LY2}
 N. C. Leung and Y. Yau,
 {\em Berezin-Toeplitz quantization in real polarizations with toric singularities}, Commun. Number Theory Phys. 16 (2022), no. 4, 851-878.
  
 \bibitem{MN}
J. M. Mour\~{a}o and J. P. Nunes,
{\em On complexified analytic Hamiltonian flows and geodesics on the space of K\"ahler metrics}, International Mathematics Research Notices, 2015.20 (2015): 10624-10656. 

\bibitem{MNP} 
J. M. Mour\~{a}o, J. P. Nunes, and M. B. Pereira,
{\em Partial coherent state transforms, $G \times T$-invariant K\"ahler structures and geometric quantization of cotangent bundles of compact Lie groups}, Advances in Mathematics 368 (2020).

\bibitem{MNR}
J. M. Mour\~{a}o, J. P. Nunes, and T. Reis,
{ \em A new approximation method for geodesics on the space of Kähler metrics}, Anal. Math. Phys. 9, 1927–1939 (2019).

\bibitem{RZ1}
Y. A. Rubinstein, and S. Zelditch,
{\em The Cauchy problem for the homogeneous Monge-Amp$\grave{e}$re equation, I. Toeplitz quantization}, J. Differ. Geom. 90 (2012), 303-327.

\bibitem{SL}
R. Sjamaar and E. Lerman, Stratified symplectic spaces and reduction, Ann. of Math. 134(1991), 375-422.

\bibitem{Th}
T. Thiemann,
{\em Gauge field theory coherent states(GCS). I. General properties}, Class. Quant. Grav. 18 (2001), no. 11, 2025-2064.
 
\bibitem{Wo}
N. M. J. Woodhouse,
{\em Geometric quantization}, Second Edition, Clarendon Press, Oxford, 1991.

\end{thebibliography}

\end{document}